\documentclass[preprint,12pt]{elsarticle}
\usepackage[utf8]{inputenc}
\usepackage{amssymb}
\usepackage{amsmath}
\usepackage{amsthm}
\usepackage{xcolor}
\usepackage{enumitem}
\usepackage{tikz}
\usepackage{hyperref}
\usepackage{ulem}

\def\F{\mathbb{F}_q}

\newtheorem{theorem}{Theorem}[section]
\newtheorem{lemma}[theorem]{Lemma}
\newtheorem{corollary}[theorem]{Corollary}
\newtheorem{proposition}[theorem]{Proposition}
\newtheorem{remark}[theorem]{Remark}

\journal{Finite Fields and Their Applications}

\begin{document}
\begin{frontmatter}

\title{Minimal value set binomials and Frobenius nonclassical curves}
\author[1]{Tiago Aprigio}
\ead{tiagoaprigio@usp.br}
\author[2]{João Paulo Guardieiro}
\ead{joao.guardieiro@ufma.br}

\affiliation[1]{
    organization={Instituto de Ciências Matemáticas e Computação},
    addressline={Av. Trabalhador São-Carlense 400}, 
    postcode={13566-590}, 
    city={São Carlos},
    state={São Paulo},
    country={Brazil}
}

\affiliation[2]{
    organization={Centro de Ciências Exatas e Tecnologia da Universidade Federal do Maranhão},
    addressline={Av. dos Portugueses, 1966}, 
    postcode={65080-805}, 
    city={São Luís},
    state={Maranhão},
    country={Brazil}
}

\begin{abstract}
    In this paper, we characterize all minimal value set binomials over $\mathbb{F}_q$, that is, binomials whose size of the set of images is the smallest possible. With this information, we also classify all quadrinomial curves with separated variables that are $\mathbb{F}_q$-Frobenius nonclassical for the morphism of lines.
\end{abstract}

\begin{keyword}
    Finite fields; Minimal value set polynomials; Frobenius nonclassical curves
\end{keyword}

\end{frontmatter}

\section{Introduction}

Let $p$ be a prime number, $q$ be a power of $p$ and $\mathbb{F}_q$ be the finite field with $q$ elements. A simple argument involving the maximum number of roots of a polynomial $F \in \mathbb{F}_q[x]$ allows one to conclude that
$$\left\lfloor \frac{q - 1}{\deg F}\right\rfloor + 1 \leq \# V_F \leq q,$$
where $V_F = \{F(\alpha) : \alpha \in \mathbb{F}_q\}$ is the \textbf{value set} of the polynomial $F$. In this work, we are interested in polynomials that attain the lower bound for the cardinality of their value set. These are the so-called \textbf{minimal value set polynomials} (MVSPs).

This concept was introduced in \cite{carlitz61}: an extension of Waring's problem for prime fields leads to the question of determining a minimal bound for the value set of a polynomial. This question naturally leads to the problem of determining the polynomials attaining this minimal bound, i.e. MVSPs. Several authors have studied this concept by fixing the size of the finite field (\cite{borges-reis1, carlitz61, Mills1964}), constructing MVSPs from well-known polynomials (\cite{chow-et-al}) and by studying the possible value sets (\cite{borges-reis}), for example. To our knowledge, the present paper is the first work that classifies MVSPs by fixing the number of monomials, and we hope that this provides a new way to address the problem of the classification of MVSPs.

Another important work on this topic is \cite{borges-separated}. In this paper, the author provides a connection between MVSPs over $\mathbb{F}_q$ and $\mathbb{F}_q$-Frobenius nonclassical curves, that is, curves for which the image under the $\mathbb{F}_q$-Frobenius morphism of any smooth point lies on its tangent line. More precisely, the author showed that the components of a curve given by the plane equation $f(x) = g(y)$ are $\mathbb{F}_q$-Frobenius nonclassical if, and only if, $f$ and $g$ are MVSPs with the same value set.

Frobenius nonclassical curves were introduced by Stöhr and Voloch in \cite{StohrVol} and have been studied since then, as can be seen in \cite{borges, borges-homma, hefez}. In their paper, the authors presented a bound for the number of rational points of curves based on linear series (see \cite[Corollary 2.14]{StohrVol}). Roughly speaking, their bound for Frobenius classical curves is sharper than Hasse-Weil's bound. On the other hand, Frobenius nonclassical curves are a potential source of curves with many points (for example, the famous Hermitian curve is a maximal curve, and also a Frobenius nonclassical one). This illustrates the importance of classifying Frobenius (non)classical curves.

In his PhD thesis, \cite{guardieiro}, the second author of this work classified all Frobenius nonclassical trinomial curves, that is, curves given by a plane equation with three monomials. A natural extension of this problem is to study the quadrinomial curves, which is one of the problems addressed in this paper.

In order to classify some of these curves, we will first characterize all minimal value set binomials. With this information, and using the connection given in \cite{borges-separated}, we will then classify the Frobenius nonclassical quadrinomial curves that can be given, after a change of variables, by an equation with separated variables $f(x) = g(y)$. The main results of this paper are the following.
\\

\noindent \textbf{Theorem A.} 
A binomial over $\mathbb{F}_q$ is a minimal value set polynomial if, and only if, up to multiplication by a nonzero element of $\mathbb{F}_q$ it is given by one of the following expressions:
    \begin{enumerate}
        \item[$\mathrm{(i)}$] $x^a + \beta$, where $\beta \in \mathbb{F}_q^*$ and $a \mid (q - 1)$;
        \item[$\mathrm{(ii)}$] $x^{b + \ell(q - 1)} - x^b$, for some $b, \ell \in \mathbb{N}$;
        
        \item[$\mathrm{(iii)}$] $ x^{\frac{2(q-1)}{3}} + \beta x^{\frac{q-1}{3}}$, where $q = 2^{2n}$ and $\beta^3 = 1$.
        \item[$\mathrm{(iv)}$] $x^{q-1} + \beta x^{\frac{q-1}{2}}$, where $p> 2$ and $\beta^2=1$;
        \item[$\mathrm{(v)}$] $x^{p^{\ell}t} + \beta x^t$, $t = \frac{q - 1}{p^{s\ell} - 1}$, $\beta \in \mathbb{F}_{p^{s\ell}}^*$ and $N_{\mathbb{F}_{p^{s\ell}} / \mathbb{F}_{p^{\ell}}}(\beta) = (-1)^s$ with $s \geq 2$;
        \item[$\mathrm{(vi)}$] $x^{2t} + \beta x^t$, $p > 2$, $t = \frac{q - 1}{p^m - 1}$ and $\beta \in \mathbb{F}_{p^m}^*$. 
    \end{enumerate}
    
\noindent \textbf{Theorem B.} Let $\mathcal{C}$ be an irreducible quadrinomial curve over $\mathbb{F}_q$ with separated variables. Then $\mathcal{C}$ is $\mathbb{F}_q$-Frobenius nonclassical if, and only if, $\mathcal{C}$ is given by one of the following plane equations:
\begin{enumerate}
    \item[$\mathrm{(i)}$] $y^{d + q - 1} - y^d = \alpha x^{b + q - 1} - \alpha x^b,$ for suitable $b,d\in\mathbb{N}$;
    \item[$\mathrm{(ii)}$] $\mathbb{F}_4 \subseteq \mathbb{F}_q$ and $y^{q - 1} = x^{\frac{2(q - 1)}{3}} + x^{\frac{q - 1}{3}} + 1$;
    \item[$\mathrm{(iii)}$] $y^{(p^{\ell}+1)t} = x^{p^{\ell}t} + x^t + 1$, where $t = \frac{q - 1}{p^{2\ell} - 1}$;
    \item[$\mathrm{(iv)}$] $y^{q-1} = x^{4t} + x^{2t} + x^t$, where $q = 2^{3n}$ and $t = \frac{q - 1}{7}$;
    \item[$\mathrm{(v)}$] $y^{q-1} = x^{6t} +  x^{5t} + x^{3t}$, where $q = 2^{3n}$ and $t = \frac{q - 1}{7}$;
    \item[$\mathrm{(vi)}$] $y^{q-1} = x^{3t} + x^{2t} + x^t$, where $q = 2^{2n}$ and $t = \frac{q - 1}{3}$;
    \item[$\mathrm{(vii)}$] $y^{(p^{2\ell} + p^\ell + 1)t} = x^{p^{2\ell}t} + x^{p^\ell t} + x^t,$ where $\ell > 0$ and $t = \frac{q - 1}{p^{3\ell} - 1}$;
    \item[$\mathrm{(viii)}$] $y^{(p^{2 \ell} + p^\ell + 1)t} = x^{(p^{2 \ell} + p^\ell)t} + x^{(p^{2 \ell} + 1)t} + x^{(p^\ell + 1)t}$, where $t = \frac{q - 1}{p^{3\ell} - 1}$;
    \item[$\mathrm{(ix)}$] $y^{(p^\ell+1)t} = \alpha x^{(p^\ell+1)t} + x^{p^{\ell}t} + x^{t}$, where $t = \frac{q - 1}{p^{2\ell} - 1}$, and $\alpha \in \mathbb{F}_{p^{\ell}}$.
\end{enumerate}

We should mention that the binomials we classified in Theorem A are in accordance with \cite[Conjecture 1.1]{borges-reis}. The quadrinomial curves of items (iii) and (vii) - (ix) are also in accordance with \cite[Theorem 2.8]{borges-reis} (which was proved assuming their Conjecture). 

This paper is organized as follows: in Section \ref{section:background} we first provide the basic concepts we will need throughout the text. In Section \ref{section:MVSB} we will study the minimal value set binomials, and we prove Theorem A. Finally, in Sections \ref{section:quadrinomial1}, \ref{section:quadrinomial2} and \ref{section:quadrinomial3} we will study the quadrinomial curves with separated variables. The combination of the results of these final sections provides the proof of Theorem B.

\section{Background}\label{section:background}
In this section, we recall the main results we will need on this work: some results on minimal value set polynomials (MVSPs) and concepts on Frobenius nonclassical quadrinomial curves.

As mentioned before, a minimal value set polynomial (MVSP) $F \in \mathbb{F}_q[x]$ is the one attaining the lower bound for the cardinality of its value set, that is,
$$\# V_F = \left\lfloor \frac{q - 1}{\deg F}\right\rfloor + 1.$$

Let $p$ be the characteristic of $\mathbb{F}_q$. Since the polynomial $x^p$ induces a permutation in $\mathbb{F}_q$, one can see that an MVSP $F$ cannot be written as $G^p$, for any $G \in \mathbb{F}_q[x]$. In fact, this equality would provide $\deg F = p\cdot \deg G$ and $\# V_F = \# V_G$.

It was noted in \cite{carlitz61} that an MVSP $F(x)$ with $\# V_F \leq 2$ does not fit in a general pattern, and the authors classified such polynomials in the following way.

\begin{proposition}\label{prop:Vf-menor-que-2}
    If \( V_F = \{\alpha\} \), then \( F \) is an MVSP if, and only if, \( F \) is of the form
$$
F(x) = \alpha + (x^q - x) R(x),
$$
where \( R \in \mathbb{F}_q[x] \backslash\{0\} \). If $V_F = \{\alpha, \beta\}$, then $F$ is an MVSP if, and only if, 
$$
F(x) = \alpha \sum_{\mu \in \mathcal{S}} \left\{ 1 - (x - \mu)^{q - 1} \right\} + \beta \sum_{\mu \notin \mathcal{S}} \left\{ 1 - (x - \mu)^{q - 1} \right\},
$$
for some non-empty proper subset \( \mathcal{S} \) of \( \mathbb{F}_q \).
\end{proposition}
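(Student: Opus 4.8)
The plan is to reduce the whole statement to polynomial interpolation over $\F$. The starting point is the elementary fact that a polynomial of degree at most $q-1$ is uniquely determined by the values it induces on $\F$ (the difference of two such polynomials would have at most $q-1$ roots yet vanish on all $q$ elements of $\F$), together with the observation that $1-(x-\mu)^{q-1}$ is the indicator of $\mu$: evaluated at $x\in\F$ it equals $1$ if $x=\mu$ and $0$ otherwise, because $c^{q-1}=1$ for every $c\in\F^{*}$. Hence for any function $\varphi\colon\F\to\F$ the polynomial $\sum_{\mu\in\F}\varphi(\mu)\{1-(x-\mu)^{q-1}\}$ is the unique representative of degree at most $q-1$ inducing $\varphi$, and two polynomials agree on $\F$ exactly when they are congruent modulo $x^{q}-x=\prod_{c\in\F}(x-c)$.

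I would dispose of the case $V_F=\{\alpha\}$ first. If $F=\alpha+(x^{q}-x)R$ with $R\neq 0$, then $F$ is constantly equal to $\alpha$ on $\F$, so $V_F=\{\alpha\}$, while $\deg F=q+\deg R\geq q$ gives $\lfloor(q-1)/\deg F\rfloor+1=1=\#V_F$, so $F$ is an MVSP. Conversely, if $F$ is an MVSP with $V_F=\{\alpha\}$, then the lower bound recalled in the introduction forces $\lfloor(q-1)/\deg F\rfloor=0$, i.e. $\deg F\geq q$; since $F-\alpha$ vanishes on $\F$ we get $F-\alpha=(x^{q}-x)R$, with $R\neq 0$ because $\deg F\geq q$ means $F\neq\alpha$.

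For $V_F=\{\alpha,\beta\}$ (so $\alpha\neq\beta$), let $G$ denote the polynomial on the right-hand side of the displayed formula attached to a non-empty proper $\mathcal S\subseteq\F$. For the ``if'' direction: the indicator computation shows $G(c)=\alpha$ for $c\in\mathcal S$ and $G(c)=\beta$ for $c\notin\mathcal S$, so $V_G=\{\alpha,\beta\}$; moreover $\deg G\leq q-1$ since $G$ is a linear combination of the polynomials $1-(x-\mu)^{q-1}$ of degree $q-1$. To see that $G$ is an MVSP I would bound its degree from below: $G$ is non-constant, so $G-\alpha$ and $G-\beta$ are nonzero polynomials of degree $\deg G$ whose sets of roots in $\F$ are exactly $\mathcal S$ and $\F\setminus\mathcal S$, whence $\deg G\geq\max\{|\mathcal S|,\,q-|\mathcal S|\}\geq\lceil q/2\rceil>(q-1)/2$; combined with $\deg G\leq q-1$ this yields $\lfloor(q-1)/\deg G\rfloor=1$, so $\#V_G=2=\lfloor(q-1)/\deg G\rfloor+1$ and $G$ is an MVSP. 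For the ``only if'' direction: if $F$ is an MVSP with $V_F=\{\alpha,\beta\}$ then $\lfloor(q-1)/\deg F\rfloor+1=2$ forces $\deg F\leq q-1$; taking $\mathcal S=\{c\in\F:F(c)=\alpha\}$, which is non-empty and proper, the polynomial $G$ attached to this $\mathcal S$ induces the same function as $F$ on $\F$, and both have degree at most $q-1$, so $F=G$.

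The only step here that is more than formal bookkeeping is the inequality $\deg G>(q-1)/2$ in the ``if'' part of the two-value case: it is precisely what certifies that the interpolating polynomial attached to an arbitrary admissible $\mathcal S$ is genuinely of minimal value set type rather than merely two-valued. I do not anticipate any serious difficulty elsewhere; the remaining care is routine — checking that $F$ is non-constant so the degree bounds are meaningful, and noting that the cancellations among the leading terms $x^{q-1}$ of the $(x-\mu)^{q-1}$ can only lower, never raise, $\deg G$.
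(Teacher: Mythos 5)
Your argument is correct and complete. The paper itself does not prove this proposition --- it is quoted from Carlitz--Lewis--Mills--Straus \cite{carlitz61} --- so there is no in-paper proof to compare against; your self-contained interpolation argument (the indicator polynomials $1-(x-\mu)^{q-1}$, uniqueness of the representative of degree at most $q-1$ of a function on $\mathbb{F}_q$, and the root-counting bound $\deg G \geq \max\{|\mathcal{S}|,\, q-|\mathcal{S}|\} > (q-1)/2$) correctly establishes both equivalences, including the one genuinely non-formal point, namely that the two-valued interpolant attached to any non-empty proper $\mathcal{S}$ automatically has degree large enough to meet the MVSP bound.
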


In order to analyze MVSPs whose value set has more than two elements, we will use the following result, which is a consequence of \cite[Equation (13) and Theorem 1]{Mills1964} and \cite[Theorem 2.2]{borges-separated}.

\begin{theorem}\label{MVSP-condition}
    Let $F(x) \in \mathbb{F}_q[x]$ be a nonconstant polynomial and $V_F = \{ \gamma_0, \gamma_1, \ldots, \gamma_r \} \subseteq \mathbb{F}_q$ be its value set. Assume that $r > 1$. For each $i \in \{0, \ldots, r\}$, set 
    $$L_i(x) = \prod_{\substack{\alpha \in \mathbb{F}_q\\F(\alpha) = \gamma_i}}(x - \alpha).$$
    Suppose that the $\gamma_i$'s are arranged in such a way that $\ell_0 := \deg L_0(x) \leq \ell_i := \deg L_i(x)$, for all $1 \leq i \leq r$. Define $\nu$ as the minimum multiplicity of the roots of $F(x) - \gamma_0$.

    If $F$ is an MVSP, then there exists a smallest positive integer $k$ such that $\nu \mid (p^k - 1)$ (in particular, $p \nmid \nu$), a positive integer $m$, polynomials $N_0, \ldots, N_r, A, B \in \mathbb{F}_q[x]$ with $L_i(x) \nmid N_i(x)$ for any $i = 0, \ldots, r$, and elements $\omega_0, \ldots, \omega_m \in \mathbb{F}_q$ with $\omega_0 \neq 0$ and $\omega_m = 1$ such that
    \begin{itemize}
        \item[(a)] $1 + \nu r = p^{mk}$;
        \item[(b)] $F(x) = L_0(x)^\nu N_0(x)^{p^{mk}} + \gamma_0$;
        \item[(c)] $L_0(x) = x \cdot A(x)^{p^{mk}} + B(x)^p$;
        \item[(d)] $F(x) = L_i(x)N_i(x)^p + \gamma_i$, $i = 1, \ldots, r$;
        \item[(e)] $\sum_{i=0}^m \omega_i L_0(x)^{p^{ki}} N_0(x)^{p^{mk}(p^{ki}-1)/\nu} = -\omega_0 (x^q - x) L_0'(x)$;
        \item[(f)] $\sum_{i=0}^m \omega_i (F(x) - \gamma_0)^{1+(p^{ki}-1)/\nu} = -\omega_0 (x^q - x) F'(x)$.
    \end{itemize}
    Note that items $(b)$ and $(d)$ imply that $F(x) - \gamma_i$ has a simple root in $\mathbb{F}_q$ for any $i = 1, \ldots, r$, and that the roots of $F(x) - \gamma_i$ which are not in $\mathbb{F}_q$ should be roots of $N_i(x)$, for $i = 0, \ldots, r$. 
    
    Conversely, if there exist $\omega_0, \ldots, \omega_m \in \mathbb{F}_q$ with $\omega_0 \neq 0$ and $\omega_m = 1$ such that item (f) holds, then $F$ is an MVSP.
\end{theorem}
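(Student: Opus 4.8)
The plan is to obtain the forward implication by translating Mills' structural description of minimal value set polynomials together with Borges' differential reformulation of Frobenius nonclassicality into the normalized list $(a)$--$(f)$, and to obtain the converse by a short degree count applied to identity $(f)$. I would begin with the elementary skeleton. Since $F(\mathbb{F}_q)\subseteq V_F$, every $\alpha\in\mathbb{F}_q$ is a root of exactly one of the pairwise coprime polynomials $F(x)-\gamma_i$; grouping roots by the value attained gives $\prod_{i=0}^r L_i(x)=x^q-x$ and $L_i(x)\mid F(x)-\gamma_i$, so that writing $F(x)-\gamma_i=L_i(x)R_i(x)$ yields $\prod_{i=0}^r\bigl(F(x)-\gamma_i\bigr)=(x^q-x)\prod_{i=0}^r R_i(x)$. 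Minimality of $\#V_F$ is equivalent to $r\deg F\le q-1<(r+1)\deg F$, i.e.\ to $\sum_i\deg R_i=(r+1)\deg F-q<\deg F$; this is the quantitative input that drives Mills' analysis, and the ordering $\ell_0\le\ell_1\le\cdots\le\ell_r$ is exactly what singles out the value $\gamma_0$ whose fibre controls the common multiplicity.

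Next I would feed this into \cite[Theorem~1]{Mills1964}, which applies because $r>1$ (the cases $r\le 1$ being Proposition~\ref{prop:Vf-menor-que-2}). This produces the common multiplicity $\nu$ of the $\mathbb{F}_q$-roots of $F(x)-\gamma_0$ with $p\nmid\nu$ (so the least $k$ with $\nu\mid p^k-1$ exists), the integer $p^{mk}$ satisfying $1+\nu r=p^{mk}$ (item $(a)$; note $p\nmid\nu$ is forced by this), the factorizations $F(x)=L_0(x)^\nu N_0(x)^{p^{mk}}+\gamma_0$ with $L_0\nmid N_0$ (item $(b)$) and $F(x)-\gamma_i=L_i(x)N_i(x)^p$ for $i\ge 1$ (item $(d)$), and the shape $L_0(x)=xA(x)^{p^{mk}}+B(x)^p$ of the linear part (item $(c)$); the conditions $L_i\nmid N_i$ merely record that $N_i$ carries precisely the non-$\mathbb{F}_q$ roots, and the ``Note'' about simple roots is immediate from $(b)$, $(d)$ and the square-freeness of the $L_i$.

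Then I would invoke \cite[Theorem~2.2]{borges-separated}: an MVSP of this shape is characterized by $\mathbb{F}_q$-Frobenius nonclassicality of the associated separated-variable curve, and unwinding the Stöhr--Voloch nonclassicality condition for that curve yields elements $\omega_0,\dots,\omega_m\in\mathbb{F}_q$, with $\omega_0\neq 0$ and, after dividing through by the leading coefficient, $\omega_m=1$, for which $(f)$ holds; then $(e)$ is equivalent to $(f)$ via $(b)$, since $F(x)-\gamma_0=L_0^\nu N_0^{p^{mk}}$ and $(N_0^{p^{mk}})'=0$ give $F'=\nu L_0^{\nu-1}N_0^{p^{mk}}L_0'$, so the two identities differ only by the unit $\nu$ and a rescaling of the $\omega_i$. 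For the converse I would argue directly: assuming $(f)$ for some $\omega_i\in\mathbb{F}_q$ with $\omega_0\neq0$, $\omega_m=1$, set $\Phi(T)=\sum_{i=0}^m\omega_i T^{1+(p^{ki}-1)/\nu}$, a nonzero polynomial of degree $1+(p^{mk}-1)/\nu$; evaluating $\Phi(F(x)-\gamma_0)=-\omega_0(x^q-x)F'(x)$ at each $\alpha\in\mathbb{F}_q$ shows every $\gamma_i-\gamma_0$ is a root of $\Phi$, whence $\#V_F\le\deg\Phi$, while comparing the degrees of the two sides of $(f)$ gives $\deg F\cdot(p^{mk}-1)/\nu=q-1$, so $\deg F\mid q-1$ and $\deg\Phi=1+(q-1)/\deg F$. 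Combined with the universal bound $\#V_F\ge\lfloor(q-1)/\deg F\rfloor+1$ this forces equality, i.e.\ $F$ is an MVSP.

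The main obstacle is the middle step: Mills' theorem and Borges' theorem are stated in rather different language, so the real work is setting up the dictionary — verifying that the power $p^{mk}$ forced by the numerical identity $(a)$ is the same one appearing in the factorizations $(b)$ and $(c)$, that the coefficients produced by the Stöhr--Voloch criterion genuinely lie in $\mathbb{F}_q$ and can be normalized so that $\omega_0\neq0$, $\omega_m=1$, and that no degenerate configuration (such as $F$ being a $p$-th power, or $\nu$ failing the divisibility condition) is overlooked. Once these identifications are made, items $(a)$--$(f)$ and the converse follow as above with only routine manipulation.
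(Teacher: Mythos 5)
Your proposal is correct and follows essentially the same route as the paper, which offers no independent proof but states the theorem precisely as a consequence of \cite[Theorem 1]{Mills1964} (yielding the structural items (a)--(d)) and \cite[Theorem 2.2]{borges-separated} (yielding the differential identities (e)--(f) and the converse). Your self-contained degree-count argument for the converse --- evaluating identity (f) at the points of $\mathbb{F}_q$ to bound $\#V_F$ by $1+(q-1)/\deg F$ --- is the standard way to extract that direction and matches what the cited sources do.
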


In this work, we will use the previous results to classify all minimal value set binomials, that is, polynomials whose equation have two terms.\\

Let now \( p \) be a prime number, let \( \mathbb{F}_q \) be the finite field with \( q = p^h \) elements, where \( h \geq 1 \) is an integer, and let $t$ be a separating variable for the field extension $\mathbb{F}_q(\mathcal{C}) / \mathbb{F}_q$. There is a smallest positive integer $\nu$ such that
\[
\begin{vmatrix}
1 & x^q & y^q \\
1 & x & y \\
0 & D_t^{\nu} x & D_t^{\nu} y
\end{vmatrix}
\neq 0,
\]
where $D_t^{\nu}$ denotes the $\nu$-th Hasse derivative with respect to $t$. This integer is called the Frobenius order  of \( \mathcal{C} \). If \( \nu = 1 \), we say that \( \mathcal{C} \) is \textbf{\( \mathbb{F}_q \)-Frobenius classical}; otherwise, it is said to be \textbf{\( \mathbb{F}_q \)-Frobenius nonclassical}. By considering $t = x$, one can obtain the geometric interpretation of Frobenius nonclassical curves given before: the image of any nonsingular point of $\mathcal{C}$ under the $q$-Frobenius morphism lies on its tangent line.


In this work, we will consider a quadrinomial curve \( \mathcal{C} \) of degree \( n \) defined over \( \mathbb{F}_q \), that is, \( \mathcal{C} \) is defined by an irreducible polynomial of the form
\[
F(x,y) = \sum_{\ell = 0}^{3} a_{i_\ell, j_\ell} x^{i_\ell} y^{j_\ell},
\]
where \( a_{i_\ell, j_\ell} \in \mathbb{F}_q^* \). There is no general criteria for the irreducibility of a quadrinomial curve. Proceeding as in \cite[Lemma 2.2]{nie}, we can obtain the following result.

\begin{proposition}
Let $\mathcal{C}$ be a quadrinomial curve (not necessarily irreducible) over a finite field $\mathbb{F}_q$. Then, up to a permutation of variables, if $x$, $y$ and $z$ do not divide the homogeneous equation of $\mathcal{C}$, then it can be written in one of the following forms:
$$k_1x^{m_1}z^{r_1} + k_2x^{m_2} + k_3y^{n_3} + z^{r_4} = 0, \quad k_1x^{m_1}z^{r_1} + k_2y^{n_2}z^{r_2} + k_3x^{m_3} + z^{r_4} = 0,$$
$$k_1x^{m_1}y^{n_1}z^{r_1} + k_2x^{m_2}y^{n_2} + k_3x^{m_3} + y^{n_4} = 0, \quad k_1x^{m_1}y^{n_1}z^{r_1} + k_2x^{m_2} + k_3y^{n_3} + z^{r_4} = 0,$$
$$k_1x^{m_1}z^{r_1} + k_2x^{m_2}z^{r_2} + k_3x^{m_3} + y^{n_4} = 0, \quad k_1x^{m_1}z^{r_1} + k_2y^{n_2}z^{r_2} + k_3x^{m_3} + y^{n_4} = 0,$$
$$k_1x^{m_1}y^{n_1}z^{r_1} + k_2x^{m_2}z^{r_2} + k_3y^{n_3} + z^{r_4} = 0, \quad k_1x^{m_1}z^{r_1} + k_2x^{m_2}z^{r_2} + k_3y^{n_3}z^{r_3} + x^{m_4} = 0,$$
$$k_1x^{m_1}z^{r_1} + k_2y^{n_2}z^{r_2} + k_3y^{n_3}z^{r_3} + x^{m_4} = 0, \quad k_1x^{m_1}y^{n_1} + k_2x^{m_2}z^{r_2} + k_3y^{n_3}z^{r_3} + z^{r_4} = 0,$$
$$k_1x^{m_1}y^{n_1}z^{r_1} + k_2x^{m_2}y^{n_2} + k_3x^{m_3}z^{r_3} + y^{n_4} = 0, \quad k_1x^{m_1}z^{r_1} + k_2x^{m_2}z^{r_2} + k_3y^{n_3}z^{r_3} + y^{n_4} = 0,$$
$$k_1x^{m_1}z^{r_1} + k_2x^{m_2}z^{r_2} + k_3y^{n_3}z^{r_3} + x^{m_4} = 0, \quad k_1x^{m_1}y^{n_1} + k_2x^{m_2}y^{n_2} + k_3x^{m_3}z^{r_3} + y^{n_4}z^{r_4} = 0,$$
$$k_1x^{m_1}y^{n_1}z^{r_1} + k_2x^{m_2}y^{n_2} + k_3x^{m_3}z^{r_3} + y^{n_4}z^{r_4} = 0, \quad k_1x^{m_1}y^{n_1}z^{r_1} + k_2x^{m_2}y^{n_2}z^{r_2} + k_3x^{m_3} + y^{n_4} = 0,$$
$$k_1x^{m_1}y^{n_1}z^{r_1} + k_2x^{m_2}y^{n_2}z^{r_2} + k_3y^{n_3}z^{r_3} + x^{m_4} = 0,$$
where $m_i, n_i, r_i$ are all positive integers for $i = 1,2,3,4$, $k_1k_2k_3 \neq 0$, and, if needed, $(m_1,r_1) \neq (m_2,r_2)$, $(n_2,r_2) \neq (n_3,r_3)$ and $(m_1,n_1,r_1) \neq (m_2,n_2,r_2)$.
\end{proposition}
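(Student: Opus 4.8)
The plan is to separate the combinatorics of which monomials occur from the arithmetic of the exponents, and then run a finite case analysis exactly along the lines of \cite[Lemma 2.2]{nie}. First I would homogenize: write the defining polynomial of $\mathcal{C}$ as $F(x,y,z)=\sum_{\ell=1}^{4}k_\ell x^{a_\ell}y^{b_\ell}z^{c_\ell}$ with $k_\ell\in\mathbb{F}_q^{\ast}$, $a_\ell+b_\ell+c_\ell=n$ for each $\ell$, and the four exponent triples pairwise distinct; after dividing the equation by one coefficient we may also assume $k_4=1$. To each monomial attach its support $S_\ell=\{v\in\{x,y,z\}: v\text{ divides }x^{a_\ell}y^{b_\ell}z^{c_\ell}\}$, a nonempty subset of $\{x,y,z\}$ since $n\ge 1$. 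The hypothesis that none of $x,y,z$ divides $F$ is then equivalent to the statement that each of $x,y,z$ is absent from at least one $S_\ell$, i.e. that the greatest common divisor of the four monomials is $1$.

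Next I would record the restrictions homogeneity imposes on the multiset $\{S_1,S_2,S_3,S_4\}$: no singleton $\{v\}$ can occur twice, because two monomials with support $\{v\}$ both equal $v^{n}$ and hence coincide (a $2$- or $3$-element support may be repeated, e.g. $x^{2}y$ and $xy^{2}$ in degree $3$). So the proof reduces to the purely combinatorial task of listing, up to the action of $S_3$ permuting $\{x,y,z\}$, all size-$4$ multisets of nonempty subsets of a $3$-element set in which every element is missing from some member and no singleton is repeated; then, for each such admissible ``shape'', one transcribes it into a polynomial and matches it (after renaming the variables) with one of the displayed normal forms, the pairwise-distinctness of the exponent triples turning into the side conditions $(m_1,r_1)\ne(m_2,r_2)$, $(n_2,r_2)\ne(n_3,r_3)$, $(m_1,n_1,r_1)\ne(m_2,n_2,r_2)$.

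For the enumeration itself I would stratify by $t:=\#\{\ell:S_\ell=\{x,y,z\}\}$, the number of ``full'' monomials. One checks immediately that $t\le 2$: if $t=4$ then $x\mid F$, and if $t=3$ the one remaining support would have to avoid all three variables, hence be empty. The cases $t=2$, $t=1$, $t=0$ are then each a short finite subdivision — for $t=2$ according to how the complements of the two non-full supports cover $\{x,y,z\}$ (forcing supports $\{x,y\},\{z\}$ or $\{x\},\{y\}$ on them); for $t\le 1$ according to which $2$-element and $1$-element subsets occur and with what multiplicities — where the covering condition discards the non-admissible patterns and the no-repeated-singleton rule discards a few more. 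Reading off the corresponding monomial shapes and normalizing the variable names yields precisely the list in the statement.

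The main obstacle is purely the bookkeeping: guaranteeing that the enumeration above is exhaustive, that no admissible shape is listed twice under two different namings of $x,y,z$, and that each shape is transcribed into the correct one of the displayed forms. I would control this by organizing the case analysis in a table indexed by the unordered multiset $(|S_1|,|S_2|,|S_3|,|S_4|)$ of support sizes together with the value of $t$; this makes the $S_3$-orbits transparent and lets one audit the outcome against the displayed forms one orbit at a time. Everything else is routine and is carried out in the cited reference.
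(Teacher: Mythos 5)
Your strategy is sound and, at bottom, performs the same finite case analysis as the paper, but it is organized differently: the paper permutes the four monomials so that, for each variable, the minimal exponent over the first three monomials equals the global minimum, factors out the corresponding monomial, reduces to two master templates, and then enumerates which exponents can vanish; you instead classify the multiset of supports $S_1,\dots,S_4$ directly, subject to the covering condition (each variable absent from some $S_\ell$) and the no-repeated-singleton rule, stratified by the number $t$ of full supports. The two enumerations carry the same information, but yours makes exhaustiveness and the $S_3$-orbits much easier to audit, which is exactly where the risk of this proposition lies.

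That auditability matters, because your final sentence --- that reading off the admissible shapes ``yields precisely the list in the statement'' --- is asserted rather than checked, and it is the one step that actually fails. Carrying out your own enumeration gives $2$ orbits for $t=2$, $6$ for $t=1$ and $10$ for $t=0$, i.e.\ $18$ orbits in all, whereas the displayed list of $17$ forms contains redundancies (the eighth and thirteenth forms are literally identical, and the ninth and twelfth are exchanged by swapping $x$ and $y$) and omits genuine orbits. A concrete omission: the support multiset $\bigl\{\{x,y,z\},\{x,z\},\{x,z\},\{y\}\bigr\}$, realized by the admissible quadrinomial $xyz^2+x^2z^2+x^3z+y^4$, is matched by none of the $17$ forms under any permutation of variables, since the only listed form with size profile $(3,2,2,1)$ has two \emph{distinct} $2$-element supports; the orbits $\bigl\{\{x,z\},\{x,z\},\{x,z\},\{y\}\bigr\}$ and $\bigl\{\{x,y\},\{x,y\},\{x\},\{y\}\bigr\}$ are likewise missing. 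So you must either flag this discrepancy explicitly --- your method is precisely the tool that detects it --- or, if you simply assert the match, your proof has a gap at the transcription step. (The paper's own proof shares this defect: it reaches the two templates correctly but leaves the passage to the $17$ displayed forms unverified, and that passage is where the list goes wrong.)
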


\begin{proof}
  Suppose that the homogeneous equation of the curve $\mathcal{C}$ is  
$$
k_{1}x^{a_{1}}y^{b_{1}}z^{c_{1}} + k_{2}x^{a_{2}}y^{b_{2}}z^{c_{2}} + k_{3}x^{a_{3}}y^{b_{3}}z^{c_{3}} + k_{4}x^{a_{4}}y^{b_{4}}z^{c_{4}} = 0.
$$
We can consider two different cases depending on the permutation of the variables.

\medskip
\noindent
\textbf{1.} Assume that $a_{1} = \min\{a_{1},a_{2},a_{3}\}$, $b_{1} = \min\{b_{1},b_{2},b_{3}\}$, and $c_{2} = \min\{c_{1},c_{2},c_{3}\}$. Then  
\begin{equation*}
\begin{aligned}
    &k_{1}x^{a_{1}}y^{b_{1}}z^{c_{1}} + k_{2}x^{a_{2}}y^{b_{2}}z^{c_{2}} + k_{3}x^{a_{3}}y^{b_{3}}z^{c_{3}}  + k_{4}x^{a_{4}}y^{b_{4}}z^{c_{4}}\\
&= x^{a_{1}}y^{b_{1}}z^{c_{2}}\left( k_{1}z^{c_{1}-c_{2}} + k_{2}x^{a_{2}-a_{1}}y^{b_{2}-b_{1}} + k_{3}x^{a_{3}-a_{1}}y^{b_{3}-b_{1}}z^{c_{3}-c_{2}  } + k_{4}x^{a_{4}-a_{1}}y^{b_{4}-b_{1}}z^{c_{4}-c_{2}} \right).
\end{aligned}
\end{equation*}
\medskip
\noindent
\textbf{2.} Assume that $a_{1} = \min\{a_{1},a_{2},a_{3}\}$, $b_{2} = \min\{b_{1},b_{2},b_{3}\}$, and $c_{3} = \min\{c_{1},c_{2},c_{3}\}$. Then  
\begin{equation*}
\begin{aligned}
    &k_{1}x^{a_{1}}y^{b_{1}}z^{c_{1}} + k_{2}x^{a_{2}}y^{b_{2}}z^{c_{2}} + k_{3}x^{a_{3}}y^{b_{3}}z^{c_{3}} +
k_{4}x^{a_{4}}y^{b_{4}}z^{c_{4}}\\
&= x^{a_{1}}y^{b_{2}}z^{c_{3}}\left( k_{1}y^{b_{1}-b_{2}}z^{c_{1}-c_{3}} + k_{2}x^{a_{2}-a_{1}}z^{c_{2}-c_{3}} + k_{3}x^{a_{3}-a_{1}}y^{b_{3}-b_{2}} + k_4x^{a_{4}-a_{1}}y^{b_{4}-b_{2}}z^{c_{4}-c_{3}}\right).
\end{aligned}
\end{equation*}

Thus, the homogeneous equation of the curve $\mathcal{C}$ can be written either as  
$
k_{1}z^{c_{1}-c_{2}} + k_{2}x^{a_{2}-a_{1}}y^{b_{2}-b_{1}} + k_{3}x^{a_{3}-a_{1}}y^{b_{3}-b_{1}}z^{c_{3}-c_{2}  } + k_{4}x^{a_{4}-a_{1}}y^{b_{4}-b_{1}}z^{c_{4}-c_{2}} = 0
$
or  
$
 k_{1}y^{b_{1}-b_{2}}z^{c_{1}-c_{3}} + k_{2}x^{a_{2}-a_{1}}z^{c_{2}-c_{3}} + k_{3}x^{a_{3}-a_{1}}y^{b_{3}-b_{2}} + k_4x^{a_{4}-a_{1}}y^{b_{4}-b_{2}}z^{c_{4}-c_{3}} = 0$. By examining when the exponents in each term of these equations can be zero, we can split these two cases into $17$ reduced forms, as stated in the proposition, with respect to the permutation of variables.
\end{proof}

\begin{remark}\label{quadrinomials}
    After dehomogenizing the equations in the previous proposition, we observe that there are only five possible forms for the affine equation defining our curve: 
    \begin{itemize}
        \item[$\mathrm{(i)}$] $k_1x^{m_1} + k_2x^{m_2} + k_3y^{n_1} + y^{n_2} = 0, \quad m_1 \neq m_2,\ n_1 \neq n_2$;
        \item[$\mathrm{(ii)}$] $k_1x^{m_1} + k_2x^{m_2} + k_3y^{n_1} + 1 = 0, \quad m_1 \neq m_2$;
        \item[$\mathrm{(iii)}$] $k_1x^{m_1} + k_2x^{m_2} + k_3x^{m_3} + y^{n_4} = 0, \quad m_1 \neq m_2,\ m_3 \neq m_1, m_2$;
        \item[$\mathrm{(iv)}$] $k_1x^{m_1}y^{n_1} + k_2x^{m_2}y^{n_2} + k_3x^{m_3} + y^{n_3} = 0, \quad (m_1,n_1) \neq (m_2,n_2)$;
        \item[$\mathrm{(v)}$] $k_1x^{m_1}y^{n_1} + k_2x^{m_2} + k_3y^{n_2} + 1 = 0$.
    \end{itemize}
where  $m_i, n_i$ are all positive integers for $i = 1,2,3,4$, and $k_1k_2k_3 \neq 0$.
\end{remark}

Quadrinomial curves whose equations are of types $(i)$, $(ii)$ or $(iii)$ can be written in the form $\mathcal{C}: f(x) = g(y)$. They are then called \textbf{curves with separated variables}. As a consequence of our main theorem, we will classify such curves with respect to the $\mathbb{F}_{q}$-Frobenius nonclassicality. The relation between these concepts is given by the following result.

\begin{proposition}\cite[Corollary 3.5]{borges-separated}\label{borges-MVSPcondition}
    Let $f(x), g(x) \in \mathbb{F}_q[x]$ be nonconstant polynomials such that $f(x) - g(y) \notin \mathbb{F}_q[x^p, y^p]$. Suppose that the irreducible components of $\mathcal{C} : f(x) = g(y)$ are defined over $\mathbb{F}_q$. If these components are $\mathbb{F}_q$-Frobenius nonclassical, then $f(y)$ and $g(x)$ are MVSPs with $V_f = V_g$. Conversely, suppose that $f(x)$ and $g(y)$ are MVSPs with $V_f = V_g$. If $\#V_f > 2$ or $\#V_f = 2 = p$, then the irreducible components of $\mathcal{C}$ are $\mathbb{F}_q$-Frobenius nonclassical.
\end{proposition}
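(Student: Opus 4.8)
The plan is to recast Frobenius nonclassicality as a single polynomial divisibility and then analyze it. Set $\phi(x,y)=f(x)-g(y)$ and $\Psi(x,y)=f'(x)(x^{q}-x)-g'(y)(y^{q}-y)$ in $\mathbb{F}_q[x,y]$, and note that the hypothesis $\phi\notin\mathbb{F}_q[x^{p},y^{p}]$ makes $\phi$ squarefree: a repeated factor would divide both $\partial_x\phi=f'(x)$ and $\partial_y\phi=-g'(y)$, hence be constant unless $f'=g'=0$. Since the components of $\mathcal{C}$ are defined over $\mathbb{F}_q$, write $\phi=\phi_1\cdots\phi_s$ with the $\phi_j$ distinct and $\mathbb{F}_q$-irreducible; working modulo $\phi_j$ and using $\partial_x\phi\equiv(\partial_x\phi_j)\prod_{i\neq j}\phi_i\pmod{\phi_j}$ (and likewise for $\partial_y$), the divisibility criterion of the Remark shows that every component of $\mathcal{C}$ is $\mathbb{F}_q$-Frobenius nonclassical if and only if $\phi\mid\Psi$ in $\mathbb{F}_q[x,y]$. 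So the statement reduces to comparing "$\phi\mid\Psi$" with "$f,g$ are MVSPs and $V_f=V_g$".

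\textbf{Direct implication.} Assume $\phi\mid\Psi$. Specializing $y\mapsto\beta\in\mathbb{F}_q$ annihilates the term $g'(\beta)(\beta^{q}-\beta)$ and yields $f(x)-g(\beta)\mid f'(x)(x^{q}-x)$ in $\mathbb{F}_q[x]$; symmetrically $g(y)-f(\alpha)\mid g'(y)(y^{q}-y)$ for $\alpha\in\mathbb{F}_q$. In particular $f'\neq 0\neq g'$: otherwise $\Psi$ lies in $\mathbb{F}_q[x]$ or $\mathbb{F}_q[y]$ and $\phi\mid\Psi$ forces $\Psi=0$, hence $f'=g'=0$, contradicting $\phi\notin\mathbb{F}_q[x^{p},y^{p}]$. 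The decisive elementary fact is that if $\theta\in\overline{\mathbb{F}_q}\setminus\mathbb{F}_q$ is a root of $f(x)-c$ of multiplicity $e$, then $(x-\theta)^{e}\mid f'(x)$ (since $\theta$ is not a root of $x^{q}-x$), whereas $\theta$ has multiplicity exactly $e-1$ in $f'$ when $p\nmid e$; hence $p\mid e$. As $f'\neq 0$, $f(x)-c$ is not a scalar times a $p$-th power, so it has a root in $\mathbb{F}_q$; applied with $c=g(\beta)$ this gives $g(\mathbb{F}_q)\subseteq V_f$, and symmetrically $V_f=V_g$. Finally, the polynomials $f(x)-\gamma$, $\gamma\in V_f$, are pairwise coprime and each divides $f'(x)(x^{q}-x)$, so $H_f(x):=\prod_{\gamma\in V_f}(f(x)-\gamma)$ divides $f'(x)(x^{q}-x)$; comparing degrees, $\#V_f\cdot\deg f=\deg H_f\le\deg f'+q\le\deg f+q-1$, so $\#V_f\le\lfloor(q-1)/\deg f\rfloor+1$, which with the universal lower bound forces equality. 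Thus $f$ is an MVSP, and so is $g$.

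\textbf{Converse.} Assume $f,g$ are MVSPs with common value set $V=V_f=V_g$, and first suppose $\#V=r+1>2$, so Theorem \ref{MVSP-condition} applies to both. A short computation shows that condition (f) for $f$ yields the identity $H_f(x)=-\omega_0\,(x^{q}-x)f'(x)$, where $H_f(x)=\prod_{\gamma\in V}(f(x)-\gamma)$ and $\omega_0=P'(\gamma_0)$ with $P(T)=\prod_{\gamma\in V}(T-\gamma)$ and $\gamma_0$ the distinguished base value: indeed the polynomial $\Phi(T)=\sum_i\omega_i T^{e_i}$ appearing in (f) must coincide with $P(T+\gamma_0)$ once one observes that its $r+1$ roots are exactly the elements of $V-\gamma_0$. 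The same holds for $g$. The point is that the constant $\omega_0=-H_f(x)/\bigl((x^{q}-x)f'(x)\bigr)$ depends only on $V$, not on $f$: the admissible base values $\gamma_0$ are forced by the requirement that $V-\gamma_0$ be the root set of a polynomial of the special shape dictated by items (a)--(e), and $P'$ takes the same value at each of them. Granting this, $f'(x)(x^{q}-x)=-\omega_0^{-1}P(f(x))$ and $g'(y)(y^{q}-y)=-\omega_0^{-1}P(g(y))$ with the same $\omega_0$, so $\Psi=-\omega_0^{-1}\bigl(P(f(x))-P(g(y))\bigr)$, which is divisible by $f(x)-g(y)=\phi$ because $P(a)-P(b)$ is always divisible by $a-b$; hence $\phi\mid\Psi$. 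When $\#V=2=p$, Theorem \ref{MVSP-condition} does not apply, but one substitutes the explicit shapes of $f,g$ from Proposition \ref{prop:Vf-menor-que-2} and checks $\phi\mid\Psi$ directly, using the simplifications available in characteristic $2$.

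\textbf{The main difficulty.} The genuine obstacle is the converse's claim that the scalar $\omega_0$ in (f) is an invariant of the value set, equivalently that $H_f(x)/\bigl((x^{q}-x)f'(x)\bigr)$ and the analogous ratio for $g$ agree. This is a statement purely about which subsets of $\mathbb{F}_q$ occur as MVSP value sets and how the base value $\gamma_0$ is pinned down among the translates of $V$, so it relies on the finer information in items (a)--(e) of Theorem \ref{MVSP-condition} (or on the value-set classification of \cite{borges-reis}); by contrast, the direct implication is essentially formal once the divisibility $\phi\mid\Psi$ is in hand.
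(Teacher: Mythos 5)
The paper does not prove this proposition: it is imported verbatim from \cite[Corollary 3.5]{borges-separated}, so there is no internal proof to compare against. Judged on its own terms, your forward implication is correct and essentially the argument of \cite{borges-separated}: the reduction of ``all components nonclassical'' to the single divisibility $f(x)-g(y)\mid f'(x)(x^q-x)-g'(y)(y^q-y)$, the specialization $y\mapsto\beta$, the multiplicity argument forcing $p\mid e$ for roots outside $\mathbb{F}_q$ (hence $V_f=V_g$), and the degree count via $\prod_{\gamma\in V_f}(f(x)-\gamma)\mid f'(x)(x^q-x)$ are all sound and complete.

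The converse, however, has a genuine gap exactly where you flag one, and it is not a cosmetic omission. Your argument needs the identity $\prod_{\gamma\in V}(f(x)-\gamma)=c\,(x^q-x)f'(x)$ to hold with the \emph{same} constant $c$ for $f$ and for $g$, and you only assert this (``Granting this''). Worse, the specific identification you propose is not reliable: reading item (f) of Theorem \ref{MVSP-condition} literally gives $c=-\omega_0=-P'(\gamma_0)$, but $P'$ is genuinely non-constant on $V$ in general. For instance, for $F(x)=x^{2}+\beta x$ over $\mathbb{F}_p$ one has $V=\{u^2-\beta^2/4: u\in\mathbb{F}_p\}$, $P'(\gamma_0)=-1$ at the distinguished value $\gamma_0=-\beta^2/4$ but $P'(\gamma_i)=-1/2$ elsewhere, while the true constant is $c=1/2=-P'(\gamma_0)/\nu$ with $\nu=2$; so the statement of item (f) as transcribed here is off by a factor of $\nu$, and ``$P'$ takes the same value at each admissible base value'' is not something you can take for granted. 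The gap does admit a short fix that you did not supply: differentiate $P(f(x))=c(x^q-x)f'(x)$ and evaluate at a simple $\mathbb{F}_q$-root $\alpha$ of $f-\gamma$ (which exists for every $\gamma\in V$ except possibly $\gamma_0$, by items (b) and (d)) to get $P'(\gamma)=-c$; since $\#V>2$, one can choose a single $\gamma$ that is non-distinguished for both $f$ and $g$, whence $c_f=-P'(\gamma)=c_g$. Finally, the case $\#V_f=2=p$ is only claimed to be ``checked directly''; no verification is given, and it does require a separate computation with the explicit shapes from Proposition \ref{prop:Vf-menor-que-2}. As written, the proposal proves the forward direction but not the converse.
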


\section{Minimal Value Set Binomials}\label{section:MVSB}
In this section, we will classify the MVSPs with two terms. This classification will allow us to obtain the Frobenius nonclassical quadrinomial curves with separated variable of types $(i)$ and $(ii)$. As a combination of the results of this section, we will obtain the proof of Theorem A.

In order to prove this statement, we start from a binomial $F \in \mathbb{F}_q[x]$. Since the multiplication by a nonzero element of $\mathbb{F}_q$ does not change the cardinality of the value set, we can consider it monic and we have
\begin{equation}\label{eq:binomial-equation}
F(x) = x^a + \beta x^b, \quad a > b, \quad \beta \in \mathbb{F}_q^*.
\end{equation}
It is well-known that a monomial $f(x) = x^v$ is an MVSP if, and only if, $v \mid (q - 1)$ (see \cite[Theorem 8.3.29]{handbook}). Since $\# V_{x^a} = \# V_{x^a + \beta}$, we conclude that $F(x) = x^a + \beta$ is an MVSP if, and only if, $a \mid (q - 1)$. We can now assume $b > 0 $ in Equation \eqref{eq:binomial-equation}.

We already mentioned that an MVSP $F(x)$ with $\# V_F \leq 2$ does not fit in a general pattern, so we will treat them firstly.

\begin{proposition}\label{Vf=1}
    Let $F(x)$ be a binomial over $\mathbb{F}_q$. Then $F$ is an MVSP with $\# V_F = 1$ if, and only if,
    $$F(x) = x^{b + \ell(q - 1)} - x^{b},$$ for some $b, \ell \in \mathbb{N}$. Furthermore, the value set of this binomial is $V_F = \{0\}$.
\end{proposition}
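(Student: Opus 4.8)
The plan is to reduce everything to the first part of Proposition~\ref{prop:Vf-menor-que-2}. First I would observe that since $a > b > 0$ we have $F(0) = 0^a + \beta\,0^b = 0$, so $0 \in V_F$; hence $\#V_F = 1$ forces $V_F = \{0\}$, which already establishes the last sentence of the statement. By Proposition~\ref{prop:Vf-menor-que-2} applied with $\alpha = 0$, the binomial $F$ is an MVSP with $V_F = \{0\}$ if and only if $F(x) = (x^q - x)R(x)$ for some nonzero $R \in \mathbb{F}_q[x]$, so the whole problem becomes a divisibility question: decide for which $a,b,\beta$ one has $(x^q - x) \mid (x^a + \beta x^b)$.

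For the forward direction I would write $F(x) = x^b(x^{a-b} + \beta)$ and $x^q - x = x(x^{q-1} - 1)$. Since $\gcd(x^{q-1}-1,\, x) = 1$ and $b \geq 1$, divisibility of $F$ by $x^q - x$ is equivalent to $(x^{q-1}-1) \mid (x^{a-b}+\beta)$. As $x^{q-1} - 1 = \prod_{\mu \in \mathbb{F}_q^*}(x-\mu)$ is squarefree and splits completely over $\mathbb{F}_q$, this divisibility holds if and only if $\mu^{a-b} = -\beta$ for every $\mu \in \mathbb{F}_q^*$. Evaluating at $\mu = 1$ gives $\beta = -1$; the remaining conditions then read $\mu^{a-b} = 1$ for all $\mu \in \mathbb{F}_q^*$, i.e. $(q-1) \mid (a-b)$ because $\mathbb{F}_q^*$ is cyclic of order $q-1$. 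Writing $a - b = \ell(q-1)$ with $\ell \geq 1$ (since $a > b$) yields $F(x) = x^{b + \ell(q-1)} - x^b$, as claimed.

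For the converse, starting from $F(x) = x^{b+\ell(q-1)} - x^b = x^b\bigl(x^{\ell(q-1)} - 1\bigr)$ with $b,\ell \geq 1$, I would use $x^{q-1} - 1 \mid x^{\ell(q-1)} - 1$ to write $F(x) = (x^q - x)\,R(x)$ with $R(x) = x^{b-1}\sum_{i=0}^{\ell-1} x^{i(q-1)} \neq 0$; then Proposition~\ref{prop:Vf-menor-que-2} immediately gives that $F$ is an MVSP with $V_F = \{0\}$, in particular $\#V_F = 1$. (Alternatively one checks directly that $F(\alpha) = 0$ for every $\alpha \in \mathbb{F}_q$, so $V_F = \{0\}$, and that $\deg F = b + \ell(q-1) \geq q$, whence the lower bound $\lfloor (q-1)/\deg F\rfloor + 1$ equals $1 = \#V_F$.)

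I do not expect a genuine obstacle here: once Proposition~\ref{prop:Vf-menor-que-2} is available, the argument is a short divisibility computation using that $x^{q-1}-1$ splits over $\mathbb{F}_q$. The only point needing care is bookkeeping on the ranges of the exponents — keeping $b \geq 1$ (the case $b=0$ having been handled separately before the proposition and yielding $\#V_F = 2$, not $1$) and $\ell \geq 1$ (so that $F$ is genuinely a binomial), which is exactly what the normalization in Equation~\eqref{eq:binomial-equation} and the standing assumption $b>0$ provide.
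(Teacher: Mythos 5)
Your proposal is correct and follows essentially the same route as the paper: both reduce via Proposition~\ref{prop:Vf-menor-que-2} (with $\alpha=0$, forced by $F(0)=0$) to the divisibility $(x^q-x)\mid F(x)$ and then extract $\beta=-1$ and $(q-1)\mid(a-b)$. Your version just spells out the divisibility step (squarefreeness of $x^{q-1}-1$ and evaluation at the elements of $\mathbb{F}_q^*$) where the paper says ``performing the division algorithm.''
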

\begin{proof}
      From Proposition \ref{prop:Vf-menor-que-2}, it is known that \( \# V_F = 1 \) if, and only if,
    $$
    x^a + \beta x^b = F(x) = (x^q - x)R(x) + \alpha,
    $$
    where \( R \in \mathbb{F}_q[x] \setminus \{0\} \) and \( \alpha \in \mathbb{F}_q \). Since $x^q = x$ for every $x \in \mathbb{F}_q$, we have $V_F = \{\alpha\}$.

    Clearly, in our case we must have \( \alpha = 0 \), and thus \( F(x) \) is divisible by \( x^q - x \). Performing the division algorithm, we obtain that \( a = b + \ell(q - 1) \) for some \( \ell \in \mathbb{N} \), and \( \beta = -1 \). Therefore,
    $$
    F(x) = x^{b + \ell(q - 1)} - x^b.
    $$
\end{proof}

\begin{proposition}\label{Vf=2}
    Let $F(x)$ be a binomial over $\mathbb{F}_q$. Then $F$ is an MVSP with $\# V_F = 2$ if, and only if, one of the following holds:
    \begin{enumerate}
        \item[$\mathrm{(i)}$] $F(x) = x^{q-1} + \beta x^{\frac{q-1}{2}}$, where $p > 2$ and $\beta^2 =1$. In this case, we have $V_F = \{0, 2\}$.
        \item[$\mathrm{(ii)}$] $F(x) = x^{\frac{2(q-1)}{3}} + \beta x^{\frac{q-1}{3}}$, where $p = 2$, $q > 2$ and $\beta^3 = 1$. In this case, we have $V_F = \{0, \beta^2\}$.
    \end{enumerate}
\end{proposition}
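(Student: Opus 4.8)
The plan is to argue directly from the shape of $F$. Write $F(x)=x^{a}+\beta x^{b}=x^{b}(x^{d}+\beta)$ with $d=a-b$ and $0<b<a$. Since $F(0)=0$ we have $0\in V_F$, so $V_F=\{0,c\}$ for a unique $c\in\mathbb{F}_q^{*}$; and since $F$ is an MVSP with $\#V_F=2$, the general lower bound forces $2=\lfloor (q-1)/a\rfloor+1$, i.e. $(q-1)/2<a\le q-1$. The condition $V_F\subseteq\{0,c\}$ is equivalent to $(x^{q}-x)\mid F(x)\bigl(F(x)-c\bigr)$, so the first step is to expand $F^{2}-cF$ and reduce it modulo $x^{q}-x$ (i.e. replace each exponent $n\ge 1$ by $((n-1)\bmod (q-1))+1$). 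Because $a\le q-1$ and $b<a$, the exponents $a$, $b$ reduce to themselves and $2b$ reduces to itself or to $2b-(q-1)$, while $2a$ and $a+b$ may wrap around; comparing monomials and using $c\neq 0$, I expect to force that some other monomial of $F^{2}-cF$ must also land on the exponent $a$ after reduction, which is possible only if $a=q-1$ or $a=2b$.

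Next I would treat these two (overlapping) possibilities in turn, each time restricting $F$ to $\mathbb{F}_q^{*}$, where $x^{q-1}=1$. If $a=q-1$, then $F(x)=1+\beta x^{b}$ on $\mathbb{F}_q^{*}$, so the image of $F$ on $\mathbb{F}_q^{*}$ is $1+\beta H$ with $H=\{x^{b}:x\in\mathbb{F}_q^{*}\}$ the subgroup of $\mathbb{F}_q^{*}$ of order $(q-1)/\gcd(b,q-1)$; since $u\mapsto 1+\beta u$ is injective, $\#V_F=2$ forces $|H|\le 2$, and $|H|=1$ is impossible (it would require $(q-1)\mid b$), so $|H|=2$. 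This already forces $p$ odd, $b=(q-1)/2$ and $H=\{1,-1\}$, and then $\{1+\beta,1-\beta\}=\{0,c\}$ yields $\beta^{2}=1$ and $c=2$ — case (i). If $a=2b$, then on $\mathbb{F}_q^{*}$, writing $u=x^{b}$, I need $u(u+\beta)\in\{0,c\}$ for every $u$ in the subgroup $H$ of order $n=(q-1)/\gcd(b,q-1)$: at most one element of $H$ (namely $-\beta$, if it lies in $H$) is sent to $0$, and every other element is a root of the quadratic $u^{2}+\beta u-c$, so $n\le 3$. The subcases $n=1$ and $n=2$ are either impossible or collapse back into case (i); for $n=3$ one must have $H$ equal to the group $\mu_3$ of cube roots of unity and $-\beta\in\mu_3$, and then applying Vieta to $u^{2}+\beta u-c=\prod_{\mu\in\mu_3\setminus\{-\beta\}}(u-\mu)$ together with $\sum_{\mu\in\mu_3}\mu=0$ forces $\beta=-\beta$, hence $p=2$; feeding this back gives $\beta^{3}=1$, $b=(q-1)/3$ and $c=\beta^{2}$ — case (ii). The converse is then a routine verification: for (i) and (ii) one checks, using $x^{q-1}=1$ on $\mathbb{F}_q^{*}$ and the structure of the relevant square or cube roots of unity, that $\deg F$ meets the MVSP bound, that $\#V_F=2$, and that the value set is exactly the one stated.

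The step I expect to be the main obstacle is the reduction of $F^{2}-cF$ in the first paragraph: one has to control carefully which exponents wrap around modulo $x^{q}-x$ and which pairs of resulting monomials can coincide, and this is where the characteristic $2$ versus odd characteristic dichotomy first intervenes (in odd characteristic $F^{2}-cF$ has five terms, in characteristic $2$ only four, since the cross term $2\beta x^{a+b}$ vanishes). The second delicate point — though only a short computation — is the $n=3$ subcase, since the Vieta relation is precisely what simultaneously pins down $p=2$ and the value of $c$; and throughout one must keep track of the overlaps between the cases $a=q-1$ and $a=2b$, and of the existence in $\mathbb{F}_q^{*}$ of subgroups of orders $2$ and $3$, so that no case is silently lost.
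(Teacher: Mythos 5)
Your proposal is correct, and it takes a genuinely different route from the paper. The paper fixes a primitive element $\xi$ and exploits the linear recurrence $F(\xi^k) = (\xi^a+\xi^b)F(\xi^{k-1}) - \xi^{a+b}F(\xi^{k-2})$: since all values lie in a two-element set containing $0$, inspecting $F(\xi), F(\xi^2), F(\xi^3), F(\xi^4)$ forces $\xi^a+\xi^b=0$ and $\beta=\pm1$, whence the period-two pattern of values and $a-b=\tfrac{q-1}{2}$ (resp.\ $\tfrac{q-1}{3}$ in characteristic $2$, which the paper dispatches with ``treated analogously''); small $q$ are handled separately. You instead extract the dichotomy $a=q-1$ or $a=2b$ from the vanishing of the reduction of $F(F-c)$ modulo $x^q-x$ — and this step does go through as you anticipate: the coefficient of $x^a$ in the reduced polynomial receives the contribution $-c\neq 0$ from $-cx^a$, and the only other exponents that can reduce to $a$ are $2a-(q-1)$ (forcing $a=q-1$) and $2b$ (forcing $a=2b$), since $a+b$ and $a+b-(q-1)$ would require $b\in\{0,q-1\}$ and $2b-(q-1)=a$ would require $b\geq a$. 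You then pass to the subgroup $H$ of $b$-th powers; the bound $|H|\le 3$ and the Vieta relation $\sum_{\mu\in\mu_3}\mu=0$ (which yields $2\beta=0$, hence $p=2$, and $c=\beta^2$) replace the paper's recurrence bookkeeping. Your version buys a uniform treatment of both characteristics from a single computation, no special-casing of $q=3,4$, and a cleaner identification of the value $c$; the paper's recurrence method is more self-contained in that it never leaves the sequence of values. One thing to keep explicit in a write-up: the proposition is stated under the paper's standing reduction to $b>0$ (the $b=0$ binomials $x^{q-1}+\beta$ are also MVSPs with two values but fall under item (i) of Theorem A), and you do assume $0<b<a$, so you are consistent with that convention.
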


\begin{proof}
The verification that the polynomials in {\rm (i)} and {\rm (ii)} are MVSPs with the stated value set is straightforward.

Conversely, assume that $F(x) = x^a + \beta x^b$ is an MVSP over $\mathbb{F}_q$ with $\#V_F = 2$. From the definition of MVSPs, we have
\[
\frac{q-1}{2} < a \leq q-1,
\]
which implies $q > 2$. For $q = 3$, a direct computation shows that the only MVSP is $x^2 + \beta x$, with $\beta = \pm 1$.
For $q = 4$, the only MVSPs are of the form $x^2 + \beta x$, with $\beta \in \mathbb{F}_4^*$. Henceforth, we assume $q \ge 5$.

Let $\xi$ be a primitive element of $\mathbb{F}_q$. For all $k \ge 2$, the sequence $\{F(\xi^k)\}$ satisfies the recurrence relation
\begin{equation}\label{eqxi}
F(\xi^k) = (\xi^a + \xi^b)F(\xi^{k-1}) - \xi^{a+b}F(\xi^{k-2}).
\end{equation}

\medskip

\noindent
\textbf{(i): $p \neq 2$.} We need to consider some sub cases.

\noindent
\textbf{(i.1): The case $F(\xi) = 0$.}
In this case, $\xi^{a-b} = -\beta$. Since $a-b < q-1$ and $\xi$ is a primitive element, we have $\beta \neq -1$, and therefore $V_F = \{0, 1+\beta\}$. From Equation \eqref{eqxi} for $k=2$, we obtain $F(\xi^2)=1+\beta$ and $\xi^{a+b}=-1$. Thus,
\[
F(\xi^k) = (\xi^a+\xi^b)F(\xi^{k-1}) + F(\xi^{k-2}).
\]
In particular, $F(\xi^3) = (\xi^a + \xi^b)F(\xi^2)$ and we have $\xi^a + \xi^b$ equals $0$ or $1$.

If $\xi^a+\xi^b = 1$, then $F(\xi^3) = F(\xi^2)$, so $F(\xi^4)=2F(\xi^2)\notin V_F$, a contradiction. Hence, $\xi^a+\xi^b=0$, and this implies
\[
F(\xi^{2n}) = F(\xi^2), \qquad F(\xi^{2n+1}) = F(\xi) = 0,
\]
for all $n \ge 1$. Since \( x^{a - b} + \beta \) has exactly \( \gcd(a - b, q - 1) \) distinct roots in \( \mathbb{F}_q \), and \( a \leq q - 1 \), we can conclude that
$$
a = b + \frac{q - 1}{2}.
$$
From $F(\xi^2)=\xi^{2b}(1+\beta)$, we deduce $b = \frac{q-1}{2}$ and $\beta = 1$.

\smallskip

\noindent
\textbf{(i.2): The case $F(\xi) \neq 0$.}
If $\beta \neq -1$, then $F(\xi)=1+\beta$ and
\[
F(\xi^2) = (\xi^a+\xi^b-\xi^{a+b})F(\xi).
\]
Therefore, $\xi^a+\xi^b-\xi^{a+b}$ equals $0$ or $1$. In the first case, we obtain $F(\xi^2) = 0$, $F(\xi^3) = -\xi^{a + b} F(\xi)$ (so $\xi^{a + b} = -1$) and $F(\xi^4) = -F(\xi^3) \notin \{0, F(\xi)\}$, which is a contradiction. In the second case, we will obtain $F(\xi^k) = F(\xi) = 1 + \beta$ for all $k \geq 1$. Hence, the polynomial $F(x) - (1 + \beta)$ has at least $q - 1$ distinct roots in $\mathbb{F}_q$, which implies $a = q - 1$ and
  $$
  F(x) - (1 + \beta) = \prod_{i = 1}^{q - 1}(x - \xi^i) = x^{q - 1} - 1,
  $$
  which contradicts Equation \eqref{eq:binomial-equation}. Therefore, this case also does not occur. 
  
  So, if $F(\xi) \neq 0$, we must have $\beta = -1$. In this situation, necessarily $\xi^a+\xi^b=0$ (otherwise we would obtain $F(\xi^4) = - F(\xi) \notin V_f$), which yields, from Equation \eqref{eqxi}
\[
F(\xi^{2n})=0, \qquad F(\xi^{2n+1})=F(\xi),
\]
for all $n\ge1$. Arguing as before, we obtain
\[
a = b + \frac{q-1}{2}, \qquad b = \frac{q-1}{2}.
\]

\medskip

\noindent
\textbf{Case (ii): $p = 2$.} The proof is analogous to case (i). Therefore, we will omit some details.

\noindent
\textbf{(ii.1): The case $F(\xi) = 0$.} We conclude again that $\beta \neq 1$ and $\xi^{a+b} = 1$ (in particular, $a + b = q - 1$). Since the characteristic of the field is $2$, we have $\xi^a + \xi^b \neq 0$. From $F(\xi^3) = (\xi^a + \xi^b)(1 + \beta)$, we obtain $\xi^a + \xi^b = 1$ and
$$F(\xi^k) = F(\xi^{k - 1}) + F(\xi^{k - 2})$$
for all $k \geq 2$. This implies:
$$F(\xi^{3n + 1}) = 0 \quad \text{and} \quad F(\xi^{3n}) = 1 + \beta = F(\xi^{3n+2}) \quad \text{for all } n \geq 0.$$
Hence, the polynomial $F(x)$ now have $\frac{q-1}{3}$ roots in $\mathbb{F}_q$. We know that the number of its roots is $\gcd(a - b, q - 1)$ and that $a + b = q - 1$. We then obtain
$$a = \frac{2(q - 1)}{3}, \quad b = \frac{q - 1}{3}$$
and so $\beta = \xi^{a - b} = \xi^{(q-1)/3}$

\smallskip

\noindent
\textbf{(ii.2): The case $F(\xi) \neq 0$.} If $\beta = 1$, we analyze $F(\xi^2)$ and $F(\xi^3)$ to conclude that $\xi^a + \xi^b = 1$, $\xi^{a+b} = 1$ and $\xi^{3(a - b)} = 1$. We obtain again 
$$(a, b) = \left( \frac{2(q - 1)}{3},\, \frac{q - 1}{3} \right) \quad \textrm{and} \quad  \beta^3 = 1.$$

If $\beta \neq 1$, then $F(\xi^2) = (\xi^a + \xi^b + \xi^{a+b})F(\xi)$, and so $\xi^a + \xi^b + \xi^{a+b}$ equals $0$ or $1$. In the first case, we analyze $F(\xi^3)$, $F(\xi^4)$ and $F(\xi^5)$ to conclude that $\xi^{a+b} = \xi^{3(a - b)} = 1$. As before, we obtain
$$(a, b) = \left( \frac{2(q - 1)}{3},\, \frac{q - 1}{3} \right) \quad \textrm{and} \quad  \beta^3 = 1.$$
In the second case, we will obtain $F(\xi^k) = F(\xi)$ for all $k \geq 1$. Analogously to one of the sub cases of (i.2), we get a contradiction with Equation \eqref{eq:binomial-equation}.
\end{proof}

We can now study the binomials whose value sets have more than two elements. With notations as in Theorem \ref{MVSP-condition}, we first study the case $\gamma_0 = 0$.

\begin{lemma}\label{lem1}
    Let $F(x)$ be as in Equation \eqref{eq:binomial-equation} and suppose that $F$ is an MVSP. With notations as in Theorem \ref{MVSP-condition}, we have $\gamma_0 = 0$ if, and only if, $p \mid a$.
\end{lemma}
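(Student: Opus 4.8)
The plan is to prove both implications from the explicit structure (a)--(f) of Theorem~\ref{MVSP-condition}, supplemented by one identity derived from (f). Note first that $F(0)=0$ (as $b>0$), so $0\in V_F$; the point is therefore only whether $0$ is the value of minimal preimage count. The identity I want is
\[
\prod_{\gamma\in V_F}\bigl(F(x)-\gamma\bigr)\;=\;-\omega_0\,(x^q-x)\,F'(x).
\]
It comes from (f): evaluating (f) at every $\alpha\in\mathbb{F}_q$ shows that $P(y):=\sum_{i=0}^m\omega_i(y-\gamma_0)^{1+(p^{ki}-1)/\nu}$ vanishes on all of $V_F$; by (a) this $P$ is monic of degree $1+(p^{mk}-1)/\nu=\#V_F$, hence $P(y)=\prod_{\gamma\in V_F}(y-\gamma)$, and substituting $y=F(x)$ and comparing with (f) gives the identity.

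Suppose $p\mid a$. Then $p\nmid b$, since otherwise $F=G^p$, which an MVSP cannot be; hence $F'(x)=b\beta\,x^{b-1}$ is a nonzero monomial. Assume, for contradiction, $\gamma_0\neq0$. Then $0$ is not a root of $F-\gamma_0$, so $x\nmid L_0$. Differentiating (b) gives $F'=\nu L_0^{\nu-1}L_0'N_0^{p^{mk}}$, a nonzero monomial, so each factor is a monomial; as $L_0$ is monic with $x\nmid L_0$ and $N_0(0)\neq0$ by (b), it follows that $N_0$ is a nonzero constant and, if $\nu\ge2$, that $L_0=1$, which is impossible since $\ell_0\ge1$. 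Hence $\nu=1$, and comparing leading coefficients in (b) gives $F-\gamma_0=L_0$, so $\ell_0=\deg F=a$; since always $\ell_i\le a$, all $\ell_i$ equal $a$ and $q=a\,\#V_F$. Comparing degrees in the identity above (recall $\deg F'=b-1$) then forces $b=1$, so (using $\#V_F=p^{mk}$ from (a)) $a=q/\#V_F$ is a power of $p$ and $F=x^a+\beta x$ is additive; for such $F$ a direct check shows that (a)--(f) also hold with $\gamma_0=0$. So in all cases we may take $\gamma_0=0$.

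Conversely, assume $\gamma_0=0$ and, for contradiction, $p\nmid a$. Since $F=x^b(x^{a-b}+\beta)$ is one of the factors of the identity, cancelling the common factor $x^b$ turns it into
\[
(x^{a-b}+\beta)\!\!\prod_{\gamma\in V_F\setminus\{0\}}\!\!\bigl(F(x)-\gamma\bigr)\;=\;-\omega_0\,(x^{q-1}-1)\,(ax^{a-b}+b\beta).
\]
If $p\mid b$, the right side is divisible by $x^{a-b}$ while the left side has nonzero constant term---contradiction. If $p\nmid b$ but $p\mid(a-b)$, write $a-b=p^{j}m_1$ with $p\nmid m_1$ and $\beta=\beta_1^{p^{j}}$; then $x^{a-b}+\beta=(x^{m_1}+\beta_1)^{p^{j}}$ and $ax^{a-b}+b\beta=(ax^{m_1}+b\beta_1)^{p^{j}}$, whose bases are coprime (a common root would force $a=b$), so the identity yields $(x^{m_1}+\beta_1)^{p^{j}}\mid x^{q-1}-1$, impossible since $x^{q-1}-1$ is separable. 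Finally, if $p\nmid b$ and $p\nmid(a-b)$, then $x^{a-b}+\beta$ and $ax^{a-b}+b\beta$ are coprime, so $x^{a-b}+\beta\mid x^{q-1}-1$; hence $x^{a-b}+\beta$ splits in $\mathbb{F}_q$ and $L_0=x(x^{a-b}+\beta)$. Substituting this into (c), writing $B=xB_1$ (forced by $L_0(0)=0$) and differentiating gives $(a-b)x^{a-b-1}=-x^{p-2}B_1^{p}$; as $p\nmid(a-b)$ this is a nonzero monomial, so $B_1$ is a monomial and $p\mid(a-b+1)$. On the other hand, since $x^{a-b}+\beta$ is separable and prime to $x$, (b) forces $\nu=1$ and $N_0^{p^{mk}}=x^{b-1}$, whence $b\equiv1\pmod p$; therefore $a=b+(a-b)\equiv1+(-1)\equiv0\pmod p$, contradicting $p\nmid a$.

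The main obstacle is the last case of the converse, where one must squeeze the congruence $p\mid(a-b+1)$ out of item~(c) and then match it against the shape forced by item~(b) to conclude $p\mid a$. A secondary subtlety is the degenerate additive branch in the first implication: there, assuming $\gamma_0\neq0$ does not by itself give a contradiction, and one must separately verify that $\gamma_0=0$ is an equally admissible choice.
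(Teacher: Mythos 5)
Your forward direction and the sub-cases $p\mid b$ and $p\nmid b,\ p\nmid(a-b)$ of the converse are correct, and your packaging of item (f) as the identity $\prod_{\gamma\in V_F}(F(x)-\gamma)=-\omega_0(x^q-x)F'(x)$ is a clean way to run essentially the paper's argument. However, there is a genuine gap in the converse, in the sub-case $p\nmid b$ and $p\mid(a-b)$. There you claim that the ``bases'' $x^{m_1}+\beta_1$ and $ax^{m_1}+b\beta_1$ are coprime because ``a common root would force $a=b$''; but a common root only forces $a\equiv b\pmod p$, and that congruence is exactly the hypothesis of this sub-case (you assumed $p\mid(a-b)$). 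So $ax^{m_1}+b\beta_1=a(x^{m_1}+\beta_1)$: the two polynomials are proportional, not coprime, and the conclusion $(x^{m_1}+\beta_1)^{p^j}\mid x^{q-1}-1$ does not follow. What actually happens is that the factor $(x^{a-b}+\beta)$ cancels from both sides of your identity, leaving $\prod_{\gamma\neq0}(F-\gamma)=-\omega_0 a(x^{q-1}-1)$, which is \emph{not} self-contradictory: it only forces $\ell_0=1$, $L_0=x$, $\nu=b$, and $N_0^{p^{mk}}=x^{a-b}+\beta$ with no roots in $\mathbb{F}_q$. Item (f) alone cannot kill this configuration.

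The paper disposes of exactly this residual case by a different mechanism: with $L_0=x$ and $\nu=b$, substitute a root $\alpha$ of $N_0$ (which exists and lies outside $\mathbb{F}_q$) into item (e); all terms with $i\geq1$ vanish, leaving $\omega_0\alpha=-\omega_0(\alpha^q-\alpha)$, hence $\alpha^q=0$ and $\alpha=0$, a contradiction. You need this (or an equivalent use of items (b)/(e) beyond the single identity you derived) to close the sub-case $p\nmid b,\ p\mid(a-b)$. As a minor remark, in your final sub-case your route through item (c) (differentiating $A^{p^{mk}}=x^{a-b}+\beta-x^{p-1}B_1^p$ to get $p\mid(a-b+1)$) is a valid variant of the paper's observation that $L_0'$ must be a $p^{mk}$-th power; that part is fine.
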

\begin{proof}
    Suppose first that $p \mid a$. By differentiating both sides of the identity in Theorem~\ref{MVSP-condition} (b), we obtain:
    \begin{equation*}
        \beta b x^{b - 1} = \nu L_0(x)^{\nu - 1} L_0'(x) N_0(x)^{p^{mk}}.
    \end{equation*}

   If \( b = 1 \), then \( \nu = 1 \) and \( N_0(x) \) is a constant polynomial. This implies that every root of \( F(x) - \gamma_0 \) lies in \( \mathbb{F}_q \) and has multiplicity \( \nu = 1 \). In particular, \( \ell_0 = a \). Hence, by the definition of \( \ell_0 \), we have \( \ell_i = a \) for all \( 0 \leq i \leq r \). Therefore, we may consider \( \gamma_0 = 0 \).

    If \( b > 1 \), then the terms \( N_0(x) \), \( L_0(x)^{\nu - 1} \) and \( L_0'(x) \) must be either a constant polynomial or a multiple of a positive power of \( x \). If either \( N_0(x) \) or \( L_0(x)^{\nu - 1} \) is a multiple of a positive power of \( x \), then $0$ is a root of $F(x) - \gamma_0$, and so \( \gamma_0 = 0 \). On the other hand, if both \( N_0(x) \) and \( L_0(x)^{\nu - 1} \) are constant polynomials, then \( \nu = 1 \) and the same argument used in the case \( b = 1 \) allows us to conclude that \( \gamma_0 = 0 \).\\

    Assume now that $\gamma_0 = 0$. The $\mathbb{F}_q$-roots of $F(x)$ are $0$ and the $(a - b)$-th roots of $-\beta$ in $\mathbb{F}_q$. Their multiplicities are $b$ and $p^c$ respectively, where $a - b = p^c \cdot u,\ p \nmid u$. Hence, since $p \nmid \nu$, we must have $\nu \in \{1, b\}$. Furthermore, $L_0(x)$ is either $x$ or of the form 
    $$x \cdot \prod_{\substack{\alpha^{a-b}+\beta=0 \\\alpha \in \mathbb{F}_q}}(x - \alpha).$$
    This yields three possibilities to consider: \\
    
    \noindent (a) \textbf{Case $\nu = b$ and $L_0(x)=x$:} Let $\alpha$ be a root of $N_0(x)$ (which must exist since $F(x)$, in this case, has roots outside of $\mathbb{F}_q$). Replacing $x = \alpha$ into Theorem \ref{MVSP-condition} (e) implies $\alpha = 0$, a contradiction. \\
    
    \noindent (b) \textbf{Case $\nu = b$ and $L_0(x)$ has non-zero roots:} Comparing the multiplicity of a root of $x^{a-b}+\beta$ in both sides of Theorem \ref{MVSP-condition} (b) implies $p^c = b+p^{mk}\tilde{u}$ for some $\tilde{u}$. This forces $p \mid b$. Since $\nu = b$, this implies $p \mid \nu$, contradicting the condition $\nu \mid (p^k - 1)$ from Theorem \ref{MVSP-condition}. \\
    
    \noindent (c) \textbf{Case $\nu = 1$:} This is the only surviving branch and, from the cases above, we can assume that $1 = \nu \neq b$. The information on $\nu$ implies that $k = 1$ (from the definition of $k$) and that $F(x)$ has simple roots (from the definition of $\nu$). Since $0$ is a root of $F(x)$ with multiplicity $b > 1$, we then conclude that the roots of $x^{a - b} + \beta$ are simple, $L_0(x) = x^{a - b + 1} + \beta x$ and $N_0(x)^{p^m} = x^{b - 1}$. In particular, $b \equiv 1 \pmod{p}$. From Theorem \ref{MVSP-condition} (c), we conclude that $L_0'(x) = (a - b + 1)x^{a - b} + \beta$ is a $p^m$-power. This is only possible if $a - b + 1 \equiv 0 \pmod{p}$ (since the alternative, $p \mid a - b$, would contradict the simplicity of the roots of $x^{a - b} + \beta$). This condition, combined with $b \equiv 1 \pmod{p}$,
    immediately yields $p \mid a$.
\end{proof}

\begin{remark}\label{rmk}
    The proof of the previous lemma also implies that, if $\gamma_0 = 0$, then $b \equiv 1 \pmod{p}$ and $\nu = 1$. In particular, $F(x)$ has all its roots in $\mathbb{F}_q$ and they are all simple (except possibly the root $0$, which has multiplicity $b$). Furthermore, $L_0(x) = x^{a - b + 1} + \beta x$ and $N_0(x)^{p^m} = x^{b - 1}$.
\end{remark}

\begin{proposition}\label{Vf>2}
    Let $F(x)$ be as in Equation \eqref{eq:binomial-equation} and suppose that $F$ is an MVSP with $\gamma_0 = 0$. Then there exist integers $s \geq 2$ and $\ell > 0$ such that $s\ell$ divides $n$, where $q = p^n$, and 
    $$F(x) = x^{p^{\ell}t} + \beta x^t, \quad t = \frac{q - 1}{p^{s\ell} - 1}, \quad \beta \in \mathbb{F}_{p^{s\ell}}^* \quad \textrm{and}\quad N_{\mathbb{F}_{p^{s\ell}} / \mathbb{F}_{p^{\ell}}}(\beta) = (-1)^s.$$
    Furthermore, the value set of this polynomial consists of the $p^{(s-1)\ell}$ roots of the polynomial $\sum_{i = 0}^{(s - 1)\ell}\frac{(-1)^{i + 1}}{\beta^{p^{i\ell} + \cdots + 1}}x^{p^{i\ell}}$.
\end{proposition}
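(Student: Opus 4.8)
The plan is to combine the structural constraints provided by Lemma~\ref{lem1} and Remark~\ref{rmk} with item~(f) of Theorem~\ref{MVSP-condition}, and read everything off a single polynomial identity. Since we are in the regime $\#V_F>2$, Remark~\ref{rmk} says $L_0(x)=x^{a-b+1}+\beta x$ has a simple root, so $\nu=1$ and hence $k=1$; it also gives $N_0(x)^{p^m}=x^{b-1}$, so $b\equiv 1\pmod p$, while $p\mid a$ by Lemma~\ref{lem1}; and item~(a) of Theorem~\ref{MVSP-condition} gives $\#V_F=p^m$ with $m\ge 1$. Plugging $\nu=k=1$ and $\gamma_0=0$ into item~(f), and using $F'(x)=\beta x^{b-1}$ (valid because $p\mid a$ and $b\equiv 1\pmod p$), I would rewrite the identity, after applying Frobenius to $F(x)^{p^i}$, as
\[
\sum_{i=0}^{m}\omega_i\bigl(x^{ap^i}+\beta^{p^i}x^{bp^i}\bigr)=-\omega_0\beta\,\bigl(x^{q+b-1}-x^{b}\bigr).
\]

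Next I would compare exponents. The strictly largest exponent on the left is $ap^m$, with coefficient $\omega_m=1$, so it must match the largest exponent $q+b-1$ on the right: this gives $ap^m=q+b-1$ and $\omega_0=-\beta^{-1}$. Then the largest exponent of the ``$\beta$-part'', namely $bp^m$, has nonzero coefficient $\omega_m\beta^{p^m}$ and is neither $q+b-1$ nor $b$, so it must be cancelled by some term $\omega_i x^{ap^i}$; hence $ap^i=bp^m$, which forces $a=p^\ell b$ with $\ell:=m-i\in\{1,\dots,m\}$. This is the crucial structural fact: it makes $F(x)=(x^b)^{p^\ell}+\beta\,x^b=\phi(x^b)$ with $\phi(w)=w^{p^\ell}+\beta w$ an additive polynomial, and together with $ap^m=q+b-1$ it yields $b(p^{\ell+m}-1)=q-1$, so $b\mid q-1$ and $x^{q-1}=(x^b)^{p^{\ell+m}-1}$. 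Substituting $w=x^b$, the identity collapses to the $\mathbb{F}_p$-linearized identity
\[
\sum_{i=0}^{m}\omega_i\bigl(w^{p^{\ell+i}}+\beta^{p^i}w^{p^i}\bigr)=w^{p^{\ell+m}}-w.
\]

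Comparing coefficients of $w^{p^j}$ gives $\omega_0\beta=-1$, forces $\omega_j=0$ for $1\le j\le\ell-1$, and yields the recursion $\omega_j=-\beta^{-p^j}\omega_{j-\ell}$ for $\ell\le j\le m$; hence, by induction, $\omega_j=0$ whenever $0<j\le m$ and $\ell\nmid j$. Since $\omega_m=1\ne 0$, this forces $\ell\mid m$, so writing $m=(s-1)\ell$ we obtain $s\ge 2$, $b=(q-1)/(p^{s\ell}-1)=:t$ (whence $s\ell\mid n$) and $a=p^\ell t$, which is the asserted form of $F$. Unwinding the recursion gives $\omega_{k\ell}=(-1)^{k+1}\beta^{-(1+p^\ell+\cdots+p^{k\ell})}$, and $\omega_m=1$ becomes $\beta^{(p^{s\ell}-1)/(p^\ell-1)}=(-1)^s$. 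To upgrade this to $\beta\in\mathbb{F}_{p^{s\ell}}$ and a norm condition I would invoke the value set: since $t\mid q-1$ and $(q-1)/t=p^{s\ell}-1$, the image of $x\mapsto x^t$ on $\mathbb{F}_q^*$ is the subgroup of order $p^{s\ell}-1$, i.e. $\mathbb{F}_{p^{s\ell}}^*$, so $V_F=\phi(\mathbb{F}_{p^{s\ell}})$; as $\phi$ is additive and $\#V_F=p^{(s-1)\ell}$, rank–nullity forces $\ker(\phi|_{\mathbb{F}_{p^{s\ell}}})$ to have size $p^\ell$, and since $\phi(w)=0$ with $w\ne 0$ means $w^{p^\ell-1}=-\beta$, with $w\mapsto w^{p^\ell-1}$ being $(p^\ell-1)$-to-one on $\mathbb{F}_{p^{s\ell}}^*$, this can happen only if $-\beta$ is a $(p^\ell-1)$-th power in $\mathbb{F}_{p^{s\ell}}^*$; in particular $\beta\in\mathbb{F}_{p^{s\ell}}^*$, and this is equivalent to $N_{\mathbb{F}_{p^{s\ell}}/\mathbb{F}_{p^\ell}}(\beta)=(-1)^s$ (after checking the parity of $1+p^\ell+\cdots+p^{(s-1)\ell}$ in the even and odd characteristic cases). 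Finally, evaluating item~(f) at the points of $\mathbb{F}_q$ shows $R(F(x))=0$ for $R(y):=\sum_{i=0}^m\omega_i y^{p^i}=\sum_{k=0}^{s-1}(-1)^{k+1}\beta^{-(1+p^\ell+\cdots+p^{k\ell})}y^{p^{k\ell}}$; since $R$ is separable of degree $p^m=\#V_F$, the value set of $F$ is exactly the root set of $R$, which is the polynomial in the statement.

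I expect the main obstacle to be the rigidity extracted in the second and third paragraphs: recognizing that the two geometric progressions of exponents $\{ap^i\}$ and $\{bp^i\}$ can only be reconciled with a two-term right-hand side if $a=p^\ell b$, and then that $\ell\mid m$, which is precisely what exposes the additive structure $F=\phi(x^b)$ and pins down $t$. A secondary subtlety is that the linearized identity alone yields only the ``norm equation'' $\beta^{(p^{s\ell}-1)/(p^\ell-1)}=(-1)^s$ and not the membership $\beta\in\mathbb{F}_{p^{s\ell}}$; that membership has to be obtained separately from the \emph{cardinality} of $V_F$ through the rank–nullity argument, which is also what produces the description of the value set as a root set.
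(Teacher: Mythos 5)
Your argument is correct and follows essentially the same route as the paper: both derive identity (f) with $\nu=k=1$ and $\gamma_0=0$ from Lemma~\ref{lem1} and Remark~\ref{rmk}, match exponents to force $a=bp^\ell$, $\ell\mid m$ and $b=(q-1)/(p^{s\ell}-1)$, and read off the $\omega_i$'s, the norm condition and the value set from the resulting linearized relation. The only genuine divergences are in sub-steps: you obtain $\beta\in\mathbb{F}_{p^{s\ell}}$ by a rank--nullity count on $\phi|_{\mathbb{F}_{p^{s\ell}}}$ where the paper just notes that $x^{(p^\ell-1)t}+\beta$ has a root in $\mathbb{F}_q$ (all roots of $F$ lie in $\mathbb{F}_q$ by Remark~\ref{rmk}), and you omit the closing verification that these binomials are indeed MVSPs, which the paper includes (needed for Theorem~A, though not for the literal statement of the proposition).
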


\begin{proof}
    From Remark~\ref{rmk}, we know that $\nu = 1$ and $b \equiv 1 \pmod{p}$. Theorem~\ref{MVSP-condition} (f) implies the existence of $\omega_0, \ldots, \omega_m \in \mathbb{F}_q$ with $\omega_0 \neq 0$ and $\omega_m = 1$ such that
    \begin{equation}\label{eq:1}
    \sum_{i=0}^{m} \omega_i (x^a + \beta x^b)^{p^i} = -\omega_0 \beta(x^{q - 1 + b} - x^b).
    \end{equation}
    Comparing the terms of highest  degree on both sides of Equation~\eqref{eq:1}, we obtain
    \begin{equation}\label{eq:2}
    \quad \omega_0 = -\frac{1}{\beta}, \quad \text{and} \quad a p^m = q  - 1 + b.
    \end{equation}

    Since the term $\omega_0 x^a$ appears on the left-hand side of Equation~\eqref{eq:1}, but does not appear on the right-hand side, it must be canceled out. Therefore, there exists $\ell > 0$ such that
    $$\omega_0 x^a = -\omega_{\ell} \beta^{p^{\ell}} x^{bp^{\ell}},$$
    and $\omega_i = 0$ for all $0 < i < \ell$, that is, $a = bp^{\ell}$ and $\omega_{\ell} = 1 / \beta^{1 + p^{\ell}}$. From Equation \eqref{eq:2}, we then conclude that $b = \frac{q - 1}{p^{\ell + m} - 1}$.

    If $\ell = m$, then we can easily check that this gives the desired result (with $s = 2$). Otherwise, we have a recursive cancellation that works in the same manner for all highest-degree terms; that is, the term
$
\omega_{k\ell} x^{a p^{k\ell}}
$
is cancelled only by the term
$
\omega_{(k+1)\ell}\,\beta^{ p^{(k+1)\ell}}\, x^{b p^{(k+1)\ell}},
$
and thus there exist integers $s > 2$ and $\ell > 0$ such that $(s - 1)\ell = m$, $\omega_i = 0$ for all 
    $$i \in \{1, \dots, p^{(s - 1)\ell} - 1\} \setminus \{p^{\ell}, p^{2\ell}, \dots, p^{(s - 2)\ell}\},$$ 
    $\omega_{j\ell} = (-1)^{j + 1} / \beta^{1 + p^\ell + \cdots + p^{j\ell}}$ for all $j \in \{1, \dots, s - 2\}$, and
    \begin{equation}\label{eq:3}
        (-1)^s = \beta^{1 + p^\ell + \cdots + p^{(s - 1)\ell}}.
    \end{equation}

    From Equation~\eqref{eq:2}, we have that $b = \dfrac{q - 1}{p^{s\ell} - 1} =: t$, and thus $s\ell \mid n$. Since $a - b = (p^{\ell} - 1)t$ and $x^{(p^{\ell} - 1)t} + \beta$ has a root in $\mathbb{F}_q$, we conclude that $\beta \in \mathbb{F}_{p^{s\ell}}$. Finally, from Equation~\eqref{eq:3} we obtain
    $$N_{\mathbb{F}_{p^{s\ell}} / \mathbb{F}_{p^{\ell}}}(\beta) = (-1)^s.$$
    
    The information on the value set of $F(x)$ comes from Equation \eqref{eq:1}, by observing that every element of $\mathbb{F}_q$ is a root of the right-hand side of this equation. We then conclude that
    $$V_{x^{p^{\ell}t}+\beta x^t} \subseteq Z\left(\sum_{i = 0}^{(s - 1)\ell}\frac{(-1)^{i + 1}}{\beta^{p^{i\ell} + \cdots + 1}}x^{p^{i\ell}}\right).$$
    The equality of both sets comes from the fact that they have the same cardinality.
    
    Direct computations with the converse of Theorem \ref{MVSP-condition} and the $\omega_i$'s we just obtained show that this polynomial is indeed an MVSP. 
\end{proof}

Finally, we consider the case $\gamma_0 \neq 0$.

\begin{proposition}\label{Vf>22}
    Let $F(x)$ be as in Equation \eqref{eq:binomial-equation} and suppose that $F$ is an MVSP with $\gamma_0 \neq 0$. Then there exists an integer $m > 0$ such that $m \mid n$, where $q = p^n$ and $p \neq 2$, and
    $$
    F(x) = x^{2t} + \beta x^t,
    $$
    with $t = \frac{q - 1}{p^m - 1}$ and $\beta \in \mathbb{F}_{p^m}^*$. Furthermore, the value set of this polynomial is $V_F = \{u^2 - \frac{\beta^2}{4} : u \in \mathbb{F}_{p^m}\}$, which is a set of cardinality $\frac{p^m-1}{2}+1$.
\end{proposition}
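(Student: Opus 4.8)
The plan is to feed the monic binomial $F(x) = x^a + \beta x^b$ ($a > b > 0$) into items $(a)$--$(f)$ of Theorem \ref{MVSP-condition}, which applies because $\#V_F > 2$ (the cases $\#V_F \leq 2$ being covered by Propositions \ref{Vf=1} and \ref{Vf=2}), and to squeeze out enough rigidity to identify $F$. I would begin by fixing the numerics. By Lemma \ref{lem1}, $\gamma_0 \neq 0$ is equivalent to $p \nmid a$. Since $F(0) = 0 \in V_F$ while $\gamma_0 \neq 0$, the value $0$ equals some $\gamma_j$ with $j \geq 1$; applying item $(d)$ at $\gamma_j = 0$ and comparing the multiplicity of the root $x = 0$ (namely $b$ in $F$ and $1$ in $L_j$) forces $b \equiv 1 \pmod p$, so $p \nmid b$. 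A nonzero root of $F$ has multiplicity equal to the highest power of $p$ dividing $a-b$, and item $(d)$ makes that multiplicity $\equiv 1 \pmod p$, so $p \nmid (a-b)$ — the degenerate case in which $x^{a-b}+\beta$ has no root in $\mathbb{F}_q$ (forcing $\ell_0 = 1$) being disposed of by a direct degree count. Finally, comparing the leading terms on the two sides of item $(f)$ gives $a = \dfrac{\nu(q-1)}{p^{mk}-1}$ and determines $\omega_0$.

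The crux is to prove $\nu = 2$, and hence $p$ odd and $k = 1$. The bound $\nu \leq 2$ is easy: if some root $\alpha$ of $F - \gamma_0$ had multiplicity $\geq 3$ then $F'(\alpha) = F''(\alpha) = 0$, and since $\gamma_0 \neq 0$ we have $\alpha \neq 0$, so these two relations combine (using $\beta \neq 0$ and $p \nmid b$) to give $a \equiv b \pmod p$, contradicting $p \nmid (a-b)$. Ruling out $\nu = 1$ is where I expect the main obstacle. In that case item $(f)$ becomes $\Lambda(F(x)) = -\omega_0(x^q-x)F'(x)$ for the $p^k$-linearized polynomial $\Lambda(z) = \sum_{i} \omega_i z^{p^{ki}}$; expanding $F$ and matching low-degree monomials forces $a = b\,p^{k\ell}$ for some $\ell \geq 1$, exactly as in the proof of Proposition \ref{Vf>2}. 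But then the right-hand side carries a nonzero monomial of degree $q + b - 1$, which one checks is not of the form $b\,p^{kj}$, whereas every monomial on the left is — a contradiction. This cancellation bookkeeping, adapted from Proposition \ref{Vf>2} but now with a four-term right-hand side (because $p \nmid a$), is the step needing the most care.

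With $\nu = 2$ and $k = 1$, set $t = \dfrac{q-1}{p^m-1}$, so $a = 2t$ and $m \mid n$. Item $(b)$ reads $F - \gamma_0 = L_0^2 N_0^{p^m}$, whence $F' = 2 L_0 L_0' N_0^{p^m}$ since $(N_0^{p^m})' = 0$ in characteristic $p$. Comparing with $F'(x) = x^{b-1}(2t\,x^{2t-b} + \beta b)$: since $\gamma_0 \neq 0$, $x = 0$ is not a root of $F - \gamma_0$, hence not of $L_0$ or $N_0$; and $2t\,x^{2t-b}+\beta b$ has only simple roots (because $p \nmid (a-b)$); so $N_0$ must be a constant, i.e. $F - \gamma_0 = L_0^2$ with $L_0$ monic squarefree of degree $t$. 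As $F - \gamma_0 = x^{2t} + \beta x^b - \gamma_0$ has only three terms, matching its subleading term gives $b \geq t$, while the multiplicity $b-1$ of $x=0$ in $L_0'$ is at most $\deg L_0' = t-1$, giving $b \leq t$; thus $b = t$, $L_0 = x^t + c$, and comparing coefficients $\beta = 2c$, $\gamma_0 = -\beta^2/4$. Hence $F(x) = x^{2t} + \beta x^t$.

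It remains to locate $\beta$ and compute $V_F$. Writing $H(x) = x^t + \beta/2$, so $F - \gamma_0 = H^2$, item $(f)$ collapses, after cancelling $H$, to $R(H(x)) = x^{q+t-1} - x^t$, where $R(z) = \sum_i \omega_i z^{p^i}$; matching $\sum_i \omega_i x^{t p^i}$ against $x^{t p^m} - x^t$ forces $\omega_i = 0$ for $0 < i < m$ and $\omega_0 = -1$, so $R(z) = z^{p^m} - z$, whose kernel is $\mathbb{F}_{p^m}$, while the constant-term relation $R(\beta/2) = 0$ puts $\beta/2$, hence $\beta$, in $\mathbb{F}_{p^m}^*$. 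For the value set, evaluating item $(f)$ on $\mathbb{F}_q$ shows $V_F - \gamma_0$ lies in the zero set of $P(y) = \sum_i \omega_i y^{1+(p^i-1)/2} = y^{(p^m+1)/2} - y$, and by cardinality it is all of it; that zero set is $\{0\}$ together with the $\tfrac{p^m-1}{2}$-th roots of unity, i.e. $\{u^2 : u \in \mathbb{F}_{p^m}\}$, so $V_F = \{u^2 - \beta^2/4 : u \in \mathbb{F}_{p^m}\}$. Finally, a direct check via the converse part of Theorem \ref{MVSP-condition} with these $\omega_i$ confirms that $F$ is indeed an MVSP.
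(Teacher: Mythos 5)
Your structural analysis is a genuinely different route from the paper's, and most of it is sound: you force $p\nmid b$ and $p\nmid(a-b)$ from item (d) of Theorem \ref{MVSP-condition}, bound $\nu\le 2$ via the simultaneous vanishing of $F'$ and $F''$ at a root of multiplicity at least $3$, exclude $\nu=1$ through the unmatched monomial of degree $q+b-1$, and then extract $F-\gamma_0=(x^t+\beta/2)^2$ from the factorization $F'=2L_0L_0'N_0^{p^m}$. (In fact the degree-$(q+b-1)$ contradiction works without first establishing $a=bp^{k\ell}$: the equation $b(p^{kj}-1)=q-1=a(p^{mk}-1)$ with $b<a$ would force $j>m$.) The paper instead counts the $\ell_i$ to get $(a-b)\mid(q-1)$ and forces $a=2b$ by a multinomial cancellation inside $(F-\gamma_0)^{r+1}$; your version of this stage is arguably cleaner.

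The gap is in your final paragraph. Matching the coefficient of $x^{t}$ in $R(H(x))=-2t\omega_0(x^{q+t-1}-x^{t})$ gives $\omega_0=2t\omega_0$, hence $2t\equiv 1\pmod p$; but your own first paragraph (item (d) at $\gamma_j=0$, multiplicity of the root $0$) gives $b=t\equiv 1\pmod p$, so you have derived $2\equiv 1\pmod p$ and the proof collapses. The culprit is that item (f), taken literally, is not the identity these polynomials satisfy when $\nu=2$: deriving (f) from (e) by multiplying through by $L_0^{\nu-1}N_0^{p^{mk}}$ and using $F'=\nu L_0^{\nu-1}L_0'N_0^{p^{mk}}$ yields $-\tfrac{\omega_0}{\nu}(x^q-x)F'$ on the right, and one can check directly on the MVSP $F=X^2+X$ over $\mathbb{F}_9$ (where $\gamma_0=-1$, $\nu=2$) that the version without the $1/\nu$ forces $\omega_0=1$ and $\omega_0=-\omega_0$ simultaneously. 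The paper's proof uses (f) only for exponent bookkeeping and for cancellations internal to one side, so it is insensitive to this constant; your coefficient-by-coefficient matching, and your concluding appeal to the converse of Theorem \ref{MVSP-condition} to verify that $F$ is an MVSP, are not. The repair is the paper's route: $\beta\in\mathbb{F}_{p^m}^*$ because the nonzero roots of $F$ all lie in $\mathbb{F}_q$, so $\beta=-\alpha^{t}$ with $\alpha^{t}$ fixed by the $p^m$-Frobenius; and both $V_F$ and the MVSP property follow by writing $F(x)=N(x)^2+\beta N(x)$ with $N:x\mapsto x^{t}$ the norm onto $\mathbb{F}_{p^m}$ and checking that $X^2+\beta X$ is an MVSP of $\mathbb{F}_{p^m}$.
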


\begin{proof}
    Suppose,  w.l.o.g., that $\gamma_1 = 0$. Since $\ell_1 > 1$ (otherwise we could consider $\gamma_0 = 0$), we know that $\ell_1 = 1 + \gcd(a - b, q - 1)$. Thus, $\ell_0 \leq \gcd(a - b, q - 1)$. Given that 
    $$
    q = \sum_{i = 0}^r \ell_i \leq 1 + 2\gcd(a - b, q - 1) + (r - 1)a
    $$
    and $r \leq \frac{q - 1}{a}$, we get $a \leq 2\gcd(a - b, q - 1)$. Therefore, $\gcd(a - b, q - 1) > \frac{a - b}{2}$, which implies that $\gcd(a - b, q - 1) = a - b$, and hence $(a - b) \mid (q - 1)$. Equality $q = \sum \ell_i$ can now be written as
    $$q = \ell_0 + [1 + (a - b)] + \sum_{i=2}^{r} \ell_i \leq \ell_0 + [1 + (a - b)] + (r - 1)a \leq \ell_0 + [1 + (a - b)] + (q - a -1),$$
    and this implies $\ell_0 \geq b$.

   Lemma \ref{lem1} states that \( p \nmid a \). Then, from Theorem~\ref{MVSP-condition} (f), we have:

    \begin{equation}\label{eq:11}
        \sum_{i = 0}^{m} \omega_i (x^a + \beta x^b - \gamma_0)^{\frac{p^{ki} - 1}{\nu} + 1} 
        = -\omega_0 (a x^{q - 1 + a} + b \beta x^{q - 1 + b} - a x^a - b \beta x^b),
    \end{equation}
    with $\omega_0 \ne 0$ and $\omega_m = 1$. By comparing the terms of highest degree on both sides of Equation~\eqref{eq:11}, we get $a(r + 1) = q - 1 + a$, so $ar = q - 1$, and therefore $a \mid (q - 1)$. 

    Moreover, since $q - 1 + b = ar + b$, the term $-\omega_0 b \beta x^{q - 1 + b}$ corresponds on the left-hand side of the previous equation to $(r + 1)\beta x^{ar + b}$. Since $1 + \nu r = p^{mk}$, we obtain
    $$
        p \mid b \;\Leftrightarrow\; r \equiv -1 \pmod{p} \;\Leftrightarrow\; \nu \equiv 1 \pmod{p}.
    $$

    However, the coefficient of $x^b$ on the left-hand side of Equation~\eqref{eq:11} is
    $$
    \omega_0 \beta + \sum_{i = 1}^{m} \omega_i \beta \left(\frac{p^{ki} - 1}{\nu} + 1\right)(-\gamma_0)^{\frac{p^{ki} - 1}{\nu}},
    $$
    and on the right-hand side it is $\omega_0 b \beta$. So if $\nu \equiv 1 \pmod{p}$, then we would have $\omega_0 \beta = 0$, which is a contradiction. Hence, $p \nmid b$.

    Since $p \nmid r$ and $r \not\equiv -1 \pmod{p}$, we conclude that $p \ne 2$, and the term $\frac{r(r + 1)}{2} \beta^2 x^{a(r - 1) + 2b}$ appears in the expansion of the left-hand side of Equation~\eqref{eq:11}. Since  $1-b/a < 1< p^{(m-1)k}/2$ for $m \neq 1$ (the case $m=1$ is trivial from Equation \eqref{eq:11}) and $\nu \leq p^k-1$ , it follows that $\nu (2 - 2b/a) < p^{(m-1)k} (p^k -1)$. Moreover, since $\nu r=p^{mk}-1$ we have

    $$
    a\left( \frac{p^{(m - 1)k} - 1}{\nu} + 1 \right) < a(r - 1) + 2b,
    $$
    and therefore the cancellation of the term $\frac{r(r + 1)}{2} \beta^2 x^{a(r - 1) + 2b}$ must occur in the expansion of $(x^a + \beta x^b - \gamma_0)^{r + 1}$. 
    
    The terms of this expansion are of the form $\binom{r + 1}{r + 1 - s - j, s, j}\beta^s(-\gamma_0)^j x^{a(r + 1 - s - j) + bs}$. We have $a(r + 1 - s - j) + bs = a(r - 1) + 2b$ if, and only if, $(s, j) \in \{(2, 0), (0,1)\}$. Therefore,

    $$
    \frac{r}{2} \beta^2 = \gamma_0 \quad \textrm{and} \quad a = 2b.
    $$
    Since $b \leq \ell_0 \leq a - b$, we get $\ell_0 = b$. Also, since $p \nmid b$, we have $b = \ell_0 \equiv 1 \pmod{p}$ (from Theorem \ref{MVSP-condition}(c)), hence $a \equiv 2 \pmod{p}$, and we obtain $r \equiv -\frac{1}{2} \pmod{p}$. Therefore, $\gamma_0 = -\beta^2 / 4$.

    Thus, $F(x) - \gamma_0 = \left(x^b + \frac{\beta}{2}\right)^2$, and we conclude that $\nu = 2$ and $k = 1$. So
    $$
    \frac{q - 1}{a} = r = \frac{p^m - 1}{2}, \quad \text{that is}, \quad 
    a = 2 \cdot \frac{q - 1}{p^m - 1}, \quad \text{and} \quad b = \frac{q - 1}{p^m - 1} =: t.
    $$
    As in the previous proposition, we conclude that $\beta \in \mathbb{F}_{p^m}^*$.

    Note that the function $x \mapsto x^t$ is the norm function from $\mathbb{F}_q$ to $\mathbb{F}_{p^m}$. Therefore, we have $V_{x^{2t} + \beta x^t} = \{X^2 + \beta X : X \in \mathbb{F}_{p^m}\}$. Direct computations show that $X^2 + \beta X$ is an MVSP over $\mathbb{F}_{p^m}$. This will imply that $x^{2t} + \beta x^t$ is an MVSP over $\mathbb{F}_q$ since $\frac{q - 1}{2t} = \frac{p^m - 1}{2}$.
\end{proof}

\section{Quadrinomial curves of type (i)}\label{section:quadrinomial1}

 As an application of our classification of minimal value set binomials, and considering Proposition \ref{borges-MVSPcondition}, we will determine the Frobenius (non-)classicality of some quadrinomial curves. With notations as in Remark \ref{quadrinomials}, in this section we study those of type (i), that is, curves defined by the plane equation
$$
\mathcal{C}: y^c + \gamma y^d = \alpha x^a + \beta x^b.
$$

Let $f(x) = \alpha x^a + \beta x^b$ and $g(y) = y^c + \gamma y^d$. A necessary condition for the irreducible components of $\mathcal{C}$ to be Frobenius nonclassical curves is that $f$ and $g$ are MVSPs with the same value set. Moreover, if $\# V_f > 2$, this is also a sufficient condition. Therefore, we divide the analysis into cases according to whether $\# V_f = 1$, $\# V_f = 2$, or $\# V_f > 2$. 

At the end of this section, we will prove that, except for the case $\# V_f = 1$, a quadrinomial curve of type (i) whose irreducible components are Frobenius nonclassical can be written, after a suitable $\mathbb{F}_q$-projectivity, as $f(x) = f(y)$. In particular, such quadrinomial curves are never irreducible, since the line $\ell: x - y = 0$ is always a component of $\mathcal{C}: f(x) - f(y) = 0$. We start with the case $\# V_f = 1$.

\begin{proposition}
Let $\mathcal{C}: g(y) = f(x)$ be a quadrinomial curve of type (i) such that $\# V_f = 1$. Suppose that the irreducible components of $\mathcal{C}$ are defined over $\mathbb{F}_q$. Then these components are $\mathbb{F}_q$-Frobenius nonclassical if, and only if, 
$$
\mathcal{C}: y^{d + q - 1} - y^d = \alpha x^{b + q - 1} - \alpha x^b,
$$
where $\alpha \in \mathbb{F}_q^*$ and $b \equiv d \equiv 1$ (mod $p$).
\end{proposition}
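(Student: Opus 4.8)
My plan is to exploit the classification of minimal value set binomials from Theorem A (specifically Proposition~\ref{Vf=1}) together with Proposition~\ref{borges-MVSPcondition}. By Proposition~\ref{borges-MVSPcondition}, a necessary condition for the irreducible components of $\mathcal{C}: g(y) = f(x)$ to be $\mathbb{F}_q$-Frobenius nonclassical is that $f(x) = \alpha x^a + \beta x^b$ and $g(y) = y^c + \gamma y^d$ are both MVSPs with $V_f = V_g$. Since we are assuming $\#V_f = 1$, we also have $\#V_g = 1$. Now I would apply Proposition~\ref{Vf=1} to each of $f$ and $g$: after absorbing the leading coefficients, $f$ being an MVSP with $\#V_f = 1$ forces $f(x) = \alpha(x^{b + \ell(q-1)} - x^b)$ for some $b, \ell \in \mathbb{N}$ with value set $\{0\}$, and similarly $g(y) = y^{d + \ell'(q-1)} - y^d$ with value set $\{0\}$; the common value set $\{0\}$ is automatic here, so no further constraint arises from $V_f = V_g$. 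This already pins down the shape of the equation as $y^{d+q-1} - y^d = \alpha x^{b+q-1} - \alpha x^b$ once we argue $\ell = \ell' = 1$ (a quadrinomial curve has exactly four monomials, so $\ell, \ell' \geq 1$, and if $\ell \geq 2$ the degree would force extra coincidences — more carefully, a binomial curve of type (i) has precisely the monomials $y^c, y^d, x^a, x^b$ with $c > d$, $a > b$, so $c = d + q - 1$ and $a = b + q - 1$ exactly, giving $\ell = \ell' = 1$).

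The subtler direction is the converse, and also the extraction of the congruence conditions $b \equiv d \equiv 1 \pmod p$. Here I cannot invoke Proposition~\ref{borges-MVSPcondition} directly for sufficiency, because that proposition only guarantees Frobenius nonclassicality of the components when $\#V_f > 2$ or $\#V_f = 2 = p$; the case $\#V_f = 1$ is exactly the excluded one. So I would instead work directly with the determinant/divisibility criterion from the Remark after the definition of Frobenius order: the components of $\mathcal{C}: F(x,y) = 0$, with $F(x,y) = g(y) - f(x) = y^{d+q-1} - y^d - \alpha x^{b+q-1} + \alpha x^b$, are $\mathbb{F}_q$-Frobenius nonclassical if and only if $F(x,y)$ divides
$$
F_x \cdot (x^q - x) + F_y \cdot (y^q - y).
$$
I would compute $F_x = -\alpha\bigl((b + q - 1)x^{b+q-2} - b x^{b-1}\bigr)$ and $F_y = (d + q - 1)y^{d+q-2} - d y^{d-1}$. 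Reducing the coefficients modulo $p$: $b + q - 1 \equiv b - 1 \pmod p$, so $F_x = -\alpha\bigl((b-1)x^{b+q-2} - b x^{b-1}\bigr) = -\alpha x^{b-1}\bigl((b-1)x^q - b\bigr)$, and similarly $F_y = y^{d-1}\bigl((d-1)y^q - d\bigr)$. Then
$$
F_x(x^q - x) + F_y(y^q - y) = -\alpha x^{b-1}\bigl((b-1)x^q - b\bigr)(x^q - x) + y^{d-1}\bigl((d-1)y^q - d\bigr)(y^q - y).
$$
Over the function field of (a component of) $\mathcal{C}$ one has $x^q - x$ and $y^q - y$ behaving as $0$ only at $\mathbb{F}_q$-points, so I would instead argue polynomially: expand using $x^q\cdot x^q = x^{2q}$ etc., and check that $F$ divides this expression precisely when $(b-1) \equiv 0$ and $(d-1) \equiv 0 \pmod p$, i.e.\ when $F_x = \alpha b x^{b-1}$ and $F_y = -d y^{d-1}$ reduce to monomials. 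Indeed, when $b \equiv d \equiv 1 \pmod p$ we get
$$
F_x(x^q - x) + F_y(y^q - y) = \alpha b x^{b-1}(x^q - x) - d y^{d-1}(y^q - y) = b\bigl(\alpha x^{b+q-1} - \alpha x^b\bigr) - d\bigl(y^{d+q-1} - y^d\bigr)\cdot(d/b)\cdots
$$
— here I would be careful: I want this to be a scalar multiple of $F$ plus something $F$ divides. Actually, when $b \equiv d \equiv 1 \pmod p$, writing $\varepsilon = b \bmod p = 1$ and $\delta = d \bmod p = 1$ (both $\equiv 1$), we get $F_x(x^q - x) + F_y(y^q-y) = (\alpha x^{b+q-1} - \alpha x^b) - (y^{d+q-1} - y^d) = -F(x,y)$ up to sign, so $F \mid$ the expression trivially. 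Conversely, if say $b \not\equiv 1 \pmod p$, I would show the term $-\alpha(b-1)x^{b+2q-2}$ (of degree $b + 2q - 2$, strictly larger than $\deg F = d + q - 1$ along the $x$-variable direction in a suitable weighting) cannot be cancelled, contradicting divisibility; this requires a short degree/leading-term argument comparing the Newton polygons of $F$ and of $F_x(x^q-x) + F_y(y^q-y)$.

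The main obstacle I anticipate is the converse divisibility argument: showing that $b \not\equiv 1$ or $d \not\equiv 1 \pmod p$ genuinely obstructs $F \mid F_x(x^q-x) + F_y(y^q-y)$. Since $F$ has separated variables, I would use the substitution trick: $F$ divides $G(x,y) := F_x(x^q-x) + F_y(y^q-y)$ iff $G$ vanishes identically on the curve $g(y) = f(x)$, i.e.\ modulo the relation $y^{d+q-1} - y^d = \alpha(x^{b+q-1} - x^b)$. Treating $G$ as a polynomial in $y$ with coefficients in $\mathbb{F}_q[x]$ and reducing high powers of $y$ using that relation, I would reach a contradiction unless the ``anomalous'' coefficients $(b-1)$ and $(d-1)$ vanish in $\mathbb{F}_q$. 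A cleaner alternative, which I would try first, is to note that $F_x(x^q - x) = -\alpha\bigl((b-1)x^q - b\bigr)x^{b-1}(x^q-x)$ and that on $\mathbb{F}_q$-rational points $x^q = x$, so the "honest" obstruction lives in the part not killed by $x^q - x$; combined with the fact (from Proposition~\ref{Vf=1}) that $f$ and $g$ have value set $\{0\}$, meaning $F(x,y) \equiv -f(x) \pmod{x^q - x, \, y^q - y}$ structurally, I expect the congruences $b \equiv d \equiv 1 \pmod p$ to drop out as exactly the condition for the Hasse-derivative determinant to vanish. Once the converse is in hand, combining with the forward direction completes the proof.
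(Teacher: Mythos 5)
Your sufficiency computation is correct and is essentially the paper's argument made concrete: when $b \equiv d \equiv 1 \pmod p$ one gets $F_x(x^q-x)+F_y(y^q-y)=-F$, which is exactly what the paper's choice $T(z)=z$, $\theta b=-1$ encodes. The necessity direction, however, has two genuine gaps. First, your justification that $\ell=\ell'=1$ is a non sequitur: $x^{b+\ell(q-1)}-x^{b}$ is a binomial for \emph{every} $\ell\geq 1$, so $y^{d+\ell'(q-1)}-y^{d}=\alpha x^{b+\ell(q-1)}-\alpha x^{b}$ is a perfectly good quadrinomial of type (i) for any $\ell,\ell'$; nothing in the shape of the equation forces $\ell=1$, and Proposition 2.6 (the MVSP criterion) cannot see it either, since the value set is $\{0\}$ for all $\ell$. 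The condition $\ell=\ell'=1$ must be extracted from the Frobenius condition itself. The paper does this by invoking \cite[Theorem 3.4]{borges-separated}: divisibility of $f'(x)(x^q-x)-g'(y)(y^q-y)$ by $f(x)-g(y)$ is equivalent to the existence of a monic $T$ and $\theta\in\mathbb{F}_q^{*}$ with $T(f(x))=\theta f'(x)(x^q-x)$ and $T(g(y))=\theta g'(y)(y^q-y)$; comparing top degrees in the first identity forces $b\equiv\ell\pmod p$ (otherwise $\bigl(b+\ell(q-1)\bigr)(\deg T-1)=q-1$, which is impossible), and then forces $\ell=\deg T=1$ and $\theta b=-1$, whence $b\equiv 1\pmod p$; the second identity then yields $\ell'=1$ and $d\equiv b\pmod p$.

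Second, your argument that $b\not\equiv 1\pmod p$ obstructs divisibility is only a sketch, and the leading-term comparison you propose does not by itself give a contradiction: writing $G=F\cdot H$ one finds $\deg_x H=q-1$, and the ``anomalous'' top coefficient $-\alpha(b-1)$ of $G$ is simply absorbed into the leading coefficient of $H$, so the term $-\alpha(b-1)x^{b+2q-2}$ \emph{can} a priori be produced by such a product. You would need to push the division (or the reduction modulo the curve relation that you mention) several steps further, which you have not done. The one-variable reformulation via $T$ and $\theta$ is precisely the device that makes this step a short degree count, and it handles the $\ell\geq 2$ case at the same time; I would recommend adopting it rather than the direct two-variable divisibility computation.
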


\begin{proof}
Since $f$ and $g$ must be MVSPs with the same value set, by Proposition~\ref{Vf=1} we have $f(x) = \alpha x^{b + j(q - 1)} - \alpha x^b$ with $\alpha \in \mathbb{F}_q^*$, and $g(y) = y^{d + j'(q - 1)} - y^d$. As mentioned in the beggining of Section \ref{section:background}, we have $f(x) \notin \mathbb{F}_q[x^p]$, otherwise it would not be an MVSP. Analogously, $g(y) \notin \mathbb{F}_q[y^p]$. Under the hypothesis that the irreducible components of $\mathcal{C}: f(x) - g(y)$ are defined over $\mathbb{F}_q$, we can conclude, from \cite[Theorem 3.4]{borges-separated}, that these components are $\mathbb{F}_q$-Frobenius nonclassical if, and only if, there exist a monic polynomial $T(x) \in \mathbb{F}_q[x]$ and $\theta\in\mathbb{F}_q^*$ such that
\begin{equation} \label{eqq1}
    T(f(x)) = \theta(\alpha (b - j) x^{b + (j+1)(q-1)} - \alpha b x^{b + q -1} - \alpha(b - j) x^{b + j (q-1)} + \alpha b x^b),
\end{equation}
and
\begin{equation} \label{eqq2}
    T(g(y)) = \theta((d - j') y^{d + (j'+1)(q-1)} - d y^{d + q -1} - (d - j') y^{d + j'(q-1)} + d y^d).
\end{equation}

Denote by $D$ the degree of $T$. Now, assume that $b - j \not\equiv 0 \pmod{p}$. Then, comparing the degrees of the polynomials in~\eqref{eqq1}, we have
$$
(b + j(q-1))D= b + j(q-1) + q-1 \Longrightarrow(b + j(q - 1))(D - 1) = q - 1.
$$
This equality is never attained. Thus, we conclude that $b - j \equiv 0 \pmod{p}$. Hence, comparing the terms with the highest degrees in~\eqref{eqq1} again, we obtain $j = D = 1$, $T(x) = x$ and $\theta b = -1$. Now, from~\eqref{eqq2}, this implies that $1 = j' \equiv d \equiv b \pmod{p}$, as desired.
\end{proof}

\begin{remark}
    We could not provide a general criteria for the irreducibility of this curve, so in Theorem B we will only consider ``suitable'' values of $b$ and $d$ such that the curve is irreducible.

    Suitable values do exist. Choose, for example, $b$ and $d$ such that $\gcd(b+q-1,d+q-1)=1$. The irreducibility (in fact, the absolute irreducibility) of the curve now comes from \cite[Theorem III.1.B]{schmidt}. 
\end{remark}

\begin{proposition}
    Let $\mathcal{C} \colon g(y) = f(x)$ be a quadrinomial curve of type (i) such that $\# V_f = 2$ and whose irreducible components are defined over $\mathbb{F}_q$. Then these irreducible components are $\mathbb{F}_q$-Frobenius nonclassical if, and only if, one of the following holds:

    \begin{itemize}
        \item[(i)]  $\mathbb{F}_4 \subseteq \mathbb{F}_q$ and $\mathcal{C}: y^{\frac{2(q-1)}{3}} + y^{\frac{q-1}{3}} = x^{\frac{2(q-1)}{3}} + x^{\frac{q-1}{3}}$;
        \item[(ii)] $p = 3$ and $\mathcal{C}: y^{q-1} + y^{\frac{q-1}{2}} = x^{q-1} + x^{\frac{q-1}{2}}$.
    \end{itemize}
\end{proposition}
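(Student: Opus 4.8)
The plan is to combine the necessity half of Proposition~\ref{borges-MVSPcondition} with the classification of two-valued minimal value set binomials in Proposition~\ref{Vf=2}. If the irreducible components of $\mathcal{C}\colon g(y)=f(x)$ are $\mathbb{F}_q$-Frobenius nonclassical, then $f$ and $g$ are MVSPs with $V_f=V_g$; since the two families of Proposition~\ref{Vf=2} occur in disjoint characteristics, $f$ and $g$ must be of the same type, and equating the value sets imposes a relation on the coefficients of $f$. I would then split the argument into the cases $p=2$ and $p>2$, handling them separately.

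Case $p=2$. Here $f=\alpha_1 x^{2t}+\beta_1 x^{t}$ and $g=y^{2t}+\gamma y^{t}$ with $t=\frac{q-1}{3}$, so $3\mid q-1$ and hence $\mathbb{F}_4\subseteq\mathbb{F}_q$; moreover $\gamma^3=(\beta_1/\alpha_1)^3=1$ and $\beta_1^2/\alpha_1=\gamma^2$. The key observation is that the cube roots of unity in $\mathbb{F}_q^*$ are exactly the $t$-th powers, because $\gcd(t,q-1)=t$ and $(q-1)/t=3$. Using this I would choose $\mu,\nu,\lambda\in\mathbb{F}_q^*$ so that replacing $x$ by $\mu x$, $y$ by $\nu y$ and multiplying through by $\lambda$ turns $\mathcal{C}$ into $y^{2t}+y^{t}=x^{2t}+x^{t}$. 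For the converse, this curve has $f=g=x^{2t}+x^{t}$, an MVSP with $V_f=V_g=\{0,1\}$ of size $2=p$ and with $f(x)-g(y)\notin\mathbb{F}_q[x^p,y^p]$ (as $t$ is odd), so the sufficiency half of Proposition~\ref{borges-MVSPcondition} yields the Frobenius nonclassicality. This gives case~(i).

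Case $p>2$. Now $f=x^{q-1}+\beta x^{(q-1)/2}$ and $g=y^{q-1}+\gamma y^{(q-1)/2}$ with $\beta,\gamma\in\{\pm1\}$, the leading coefficient $1$ of $f$ being forced by $V_f=V_g=\{0,2\}$. Scaling $x$ and $y$ by nonsquares where needed, I may assume $\beta=\gamma=1$, so $\mathcal{C}$ becomes $y^{q-1}+y^{(q-1)/2}=x^{q-1}+x^{(q-1)/2}$, and it remains to decide for which $p$ its components are $\mathbb{F}_q$-Frobenius nonclassical. By \cite[Lemma 3.3]{borges-separated} this is the divisibility $f(x)-f(y)\mid f'(x)(x^q-x)-f'(y)(y^q-y)$. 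Introducing $w=x^{(q-1)/2}$, $v=y^{(q-1)/2}$ and using $x^q-x=x(w^2-1)$, one rewrites both polynomials in $w$ and $v$ alone: $f(x)-f(y)=(w-v)(w+v+1)$ and $f'(x)(x^q-x)-f'(y)(y^q-y)=-\tfrac12\bigl(P(w)-P(v)\bigr)$ with $P(T)=2T^4+T^3-2T^2-T$. Since $\mathbb{F}_q[x]$ is free over $\mathbb{F}_q[x^{(q-1)/2}]$, the divisibility in $\mathbb{F}_q[x,y]$ is equivalent to $(W-V)(W+V+1)\mid P(W)-P(V)$ in $\mathbb{F}_q[W,V]$. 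The factor $W-V$ always divides $P(W)-P(V)$, and reducing the quotient modulo $W+V+1$ (that is, setting $V=-W-1$) leaves $-3W(W+1)$; hence the divisibility holds if and only if $p=3$. Since \cite[Lemma 3.3]{borges-separated} is an equivalence, this simultaneously settles both directions and yields case~(ii).

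I expect the main obstacle to be the bookkeeping in the case $p>2$: passing cleanly to the variable $w=x^{(q-1)/2}$, justifying that divisibility in $\mathbb{F}_q[x,y]$ descends to divisibility in $\mathbb{F}_q[W,V]$, and carrying out the reduction modulo $W+V+1$ so as to isolate the obstruction $-3W(W+1)$. The projectivity normalizations in the case $p=2$, reducing all four coefficients to $1$, are a secondary and more routine source of care.
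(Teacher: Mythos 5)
Your argument is correct, and for the most part it runs parallel to the paper's: both proofs invoke the necessity half of Proposition \ref{borges-MVSPcondition} together with Proposition \ref{Vf=2} to pin down $f$ and $g$, normalize the coefficients via the surjectivity of the norm maps $x\mapsto x^{(q-1)/3}$ (resp.\ a rescaling by a nonsquare when $p>2$), and in characteristic $2$ close the loop with the sufficiency half of Proposition \ref{borges-MVSPcondition} since $\#V_f=2=p$. The genuine divergence is in the final step of the case $p>2$, where sufficiency is not automatic. The paper factors
$$y^{q-1}+y^{\frac{q-1}{2}}-x^{q-1}-x^{\frac{q-1}{2}}=\bigl(y^{\frac{q-1}{2}}-x^{\frac{q-1}{2}}\bigr)\bigl(y^{\frac{q-1}{2}}+x^{\frac{q-1}{2}}+1\bigr),$$
identifies the components as lines together with a Fermat curve of degree $\tfrac{q-1}{2}$, and quotes Garcia--Voloch to conclude that the Fermat factor is Frobenius nonclassical exactly when $p=3$. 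You instead apply the divisibility criterion of \cite[Lemma 3.3]{borges-separated} directly, descend it to $\mathbb{F}_q[W,V]$ via the substitution $W=x^{(q-1)/2}$, $V=y^{(q-1)/2}$ (the freeness argument you sketch is exactly what is needed for the necessity direction), and isolate the obstruction $-3W(W+1)$ after reducing modulo $W+V+1$; I checked the reduction and it is correct. Your route is self-contained and avoids the external input from Fermat curves, at the cost of a polynomial computation and of not exhibiting the geometric structure of $\mathcal{C}$ (a Fermat component plus $\tfrac{q-1}{2}$ lines), which the paper's factorization makes transparent and reuses elsewhere. Both are complete proofs of the statement.
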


\begin{proof}
First, suppose that $p = 2$. Then, by Proposition~\ref{borges-MVSPcondition} and Proposition~\ref{Vf=2}, the irreducible components of the curve $\mathcal{C}$ are $\mathbb{F}_q$-Frobenius nonclassical if, and only if,
$$
f(x) = \alpha x^{\frac{2(q - 1)}{3}} + \alpha\beta x^{\frac{q - 1}{3}} \quad \text{and} \quad g(y) = y^{\frac{2(q - 1)}{3}} + \beta' y^{\frac{q - 1}{3}},
$$
where $\alpha \in \mathbb{F}_q^*, \beta^3 = 1 = \beta'^3$, and $ \{0, \alpha \beta^2\} = V_f = V_g = \{0, \beta'^2\}$. In particular, $\alpha = \beta'^2/\beta^2= \beta/\beta'$. We then obtain
$$\mathcal{C}: \beta'y^{\frac{2(q-1)}{3}} + \beta'^2 y^{\frac{q-1}{3}} = \beta x^{\frac{2(q-1)}{3}} + \beta^2 x^{\frac{q-1}{3}}.$$
Since $\frac{1}{\beta^2}$ and $\frac{1}{\beta'^2}$ are in the image of the norm function $x \mapsto x^{(q-1)/3}$, we can apply a transformation $(x^{(q-1)/3}, y^{(q-1)/3}) \mapsto \left(\frac{x^{(q-1)/3}}{\beta^2}, \frac{y^{(q-1)/3}}{\beta'^2}\right)$ to put $\mathcal{C}$ in the desired form. Note that
$$y^{\frac{2(q-1)}{3}}+y^{\frac{q-1}{3}}+x^{\frac{2(q-1)}{3}}+x^{\frac{q-1}{3}} = \left(y^{\frac{q-1}{3}} + x^{\frac{q-1}{3}}\right) \cdot \left(y^{\frac{q-1}{3}} + x^{\frac{q-1}{3}} + 1\right).$$
From \cite[Corollary 2.10]{beelen-pelikaan}, the component $y^{(q-1)/3}+x^{(q-1)/3}+1$ is irreducible. From \cite[Corollary 2.5]{beelen-pelikaan}, the other component can be written as the product of lines
$$y^{\frac{q-1}{3}} + x^{\frac{q-1}{3}} = \prod_{\xi^{(q-1)/3}=1}(y + \xi x).$$
We have $\{\xi^{(q-1)/3} = 1\} \subseteq \mathbb{F}_q$, so the lines $y + \xi x$ are all defined over $\mathbb{F}_q$. Therefore, the quadrinomial curve is the product of a Frobenius nonclassical Fermat curve and $\frac{q-1}{3}$ lines.

Now, suppose $p > 2$. If $f$ and $g$ are MVSPs, then they have the same form as the polynomial $F$ in Proposition~\ref{Vf=2} (i). In the case $f(x) = x^{q-1} - x^{(q-1)/2}$, we apply the transformation $x \mapsto \zeta x$, where $\zeta$ is a primitive element of $\mathbb{F}_q$, to put the curve in the form $\mathcal{C}: y^{q-1}+y^{\frac{q-1}{2}}-x^{q-1}-x^{\frac{q-1}{2}} = 0$. Note that
$$y^{q-1}+y^{\frac{q-1}{2}}-x^{q-1}-x^{\frac{q-1}{2}} = \left(y^{\frac{q-1}{2}} - x^{\frac{q-1}{2}}\right) \cdot \left(y^{\frac{q-1}{2}} + x^{\frac{q-1}{2}} + 1\right),$$
so the quadrinomial curve is the product of a Fermat curve and $\frac{q-1}{2}$ lines $y - \xi x$, with $\xi^{(q-1)/2}=1$. From \cite[Theorem 2]{garcia1}, the Fermat curve is Frobenius nonclassical if, and only if, $p = 3$.
\end{proof}

Since the curves we just obtained are reducible, they are not included in the classification of irreducible curves in Theorem B.

\begin{proposition}
    Let $\mathcal{C} \colon g(y) = f(x)$ be a quadrinomial curve of type (i) such that $\# V_f > 2$, and suppose that its irreducible components are defined over $\mathbb{F}_q$. Then the irreducible components of $\mathcal{C}$ are $\mathbb{F}_q$-Frobenius nonclassical if, and only if, after a suitable $\mathbb{F}_q$-projectivity $\mathcal{C}$ is defined by one of the following plane equations:
    \begin{itemize}
        \item[(i)] $y^{p^{\ell}t} + \beta y^{t} = x^{p^{\ell}t} + \beta x^t$, where $t = \frac{q - 1}{p^{s\ell} - 1}, \beta \in \mathbb{F}_{p^{s\ell}}^*$ and $N_{\mathbb{F}_{p^{s\ell}}/\mathbb{F}_{p^{\ell}}}(\beta) = (-1)^s$ with $s \geq 2$;
        
        \item[(ii)] $y^{2t} + \beta y^t = x^{2t} + \beta x^t$, where $p > 2$, $t = \frac{q - 1}{p^m - 1}$ and $\beta \in \mathbb{F}_{p^m}^*$.
    \end{itemize}
Note that both curves are reducible, since the line $x - y$ is a component of them. Therefore, they are not included in the classification of irreducible curves in Theorem B.
\end{proposition}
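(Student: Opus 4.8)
The plan is to extract everything from Proposition~\ref{borges-MVSPcondition}: since $\#V_f>2$, the irreducible components of $\mathcal{C}\colon g(y)=f(x)$ are $\mathbb{F}_q$-Frobenius nonclassical if and only if $f$ and $g$ are MVSPs with $V_f=V_g$. Writing $f(x)=\alpha x^a+\beta x^b$ and $g(y)=y^c+\gamma y^d$, put $f_0:=f/\alpha$, so that $f_0$ and $g$ are monic binomial MVSPs with $V_f=\alpha V_{f_0}=V_g$. By Propositions~\ref{Vf>2} and~\ref{Vf>22}, each of $f_0,g$ has exponent pair $(p^{\ell}t,t)$ with $t=\frac{q-1}{p^{s\ell}-1}$, $s\ge 2$, and a norm condition $N_{\mathbb{F}_{p^{s\ell}}/\mathbb{F}_{p^{\ell}}}=(-1)^{s}$ on its lower coefficient (call this \emph{type A}), or $(2t,t)$ with $p>2$ and $t=\frac{q-1}{p^{m}-1}$ (\emph{type B}); moreover those propositions describe the value sets — in type A as the image of a linearised map $z\mapsto z^{p^{\ell}}+\beta' z$ on $\mathbb{F}_{p^{s\ell}}$, hence a codimension-one $\mathbb{F}_{p^{\ell}}$-subspace of $\mathbb{F}_{p^{s\ell}}$, and in type B as a translate of the set $S_0$ of squares of $\mathbb{F}_{p^{m}}$. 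So the task reduces to classifying pairs of such binomials with a common value set and then exhibiting a diagonal projectivity $x\mapsto\mu x$, $y\mapsto\nu y$ (together with scaling the equation) that puts $\mathcal{C}$ in one of the stated forms.

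I would first argue that $f_0$ and $g$ are of the same type, and then that they have the same exponents. Sameness of type is a counting remark: in type A, $\#V=p^{(s-1)\ell}$, a power of $p$, while in type B, $\#V=\frac{p^{m}+1}{2}$, which is never a power of $p$ (from $\frac{p^{m}+1}{2}=p^{j}$ one gets $p\mid p^{m}+1$, forcing $j=0$ and $p^{m}=1$). For the exponents in type~A: since scaling by $\alpha$ preserves $\mathbb{F}_{p^{\ell}}$-linearity, the common set $V:=V_f=V_g$ is simultaneously an $\mathbb{F}_{p^{\ell}}$-subspace and an $\mathbb{F}_{p^{\ell'}}$-subspace; writing $V=\alpha V_{f_0}$ with $V_{f_0}\subseteq\mathbb{F}_{p^{s\ell}}$, invariance of $V$ under $\mathbb{F}_{p^{\ell'}}$ forces $\mathbb{F}_{p^{\ell'}}V_{f_0}=V_{f_0}$, hence $\mathbb{F}_{p^{\ell'}}\subseteq\mathbb{F}_{p^{s\ell}}$, i.e. $\ell'\mid s\ell$; symmetrically $\ell\mid s'\ell'$; and together with the cardinality equality $(s-1)\ell=(s'-1)\ell'$ a short arithmetic argument forces $\ell=\ell'$, $s=s'$, hence $t=t'$. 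In type B, $m=m'$ follows at once from $\frac{p^{m}+1}{2}=\frac{p^{m'}+1}{2}$. Thus $f_0$ and $g$ have identical exponent pairs, so $\mathcal{C}$ is $\alpha x^{p^{\ell}t}+\alpha\beta_1 x^{t}=y^{p^{\ell}t}+\beta_2 y^{t}$ (resp. $\alpha x^{2t}+\alpha\beta_1 x^{t}=y^{2t}+\beta_2 y^{t}$).

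It then remains to match the scalars — the delicate step. In type A, using the trace pairing of $\mathbb{F}_{p^{s\ell}}$ over $\mathbb{F}_{p^{\ell}}$: the orthogonal complement of the image of $z\mapsto\alpha(z^{p^{\ell}}+\beta_1 z)$ is the $\mathbb{F}_{p^{\ell}}$-line spanned by a root of $\theta^{p^{(s-1)\ell}-1}=-\alpha^{1-p^{(s-1)\ell}}\beta_1$, while that of $V_g$ is spanned by a root of $\theta^{p^{(s-1)\ell}-1}=-\beta_2$; since $\lambda^{p^{(s-1)\ell}-1}=1$ for all $\lambda\in\mathbb{F}_{p^{\ell}}^{*}$, the identity $\alpha V_{f_0}=V_g$ is equivalent to $\beta_2=\alpha^{1-p^{(s-1)\ell}}\beta_1$. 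As $\alpha\in\mathbb{F}_{p^{s\ell}}^{*}$ (since $V_{f_0},V_g\subseteq\mathbb{F}_{p^{s\ell}}$), this is exactly the condition making some $w\in\mathbb{F}_q^{*}$ satisfy $w^{t}=\alpha^{p^{(s-1)\ell}}$ and $w^{p^{\ell}t}=\alpha$ (using that $x\mapsto x^{t}$ is the norm onto $\mathbb{F}_{p^{s\ell}}^{*}$), and then $x\mapsto\mu x$, $y\mapsto w\mu y$ carries $\mathcal{C}$ to $y^{p^{\ell}t}+\beta y^{t}=x^{p^{\ell}t}+\beta x^{t}$ with $\beta=\beta_1/\mu^{(p^{\ell}-1)t}\in\mathbb{F}_{p^{s\ell}}$ still of norm $(-1)^{s}$. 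In type B, taking differences of the two value sets gives $\alpha\in\mathbb{F}_{p^{m}}^{*}$ and $\alpha S_0=S_0+c$ for some $c$; a quadratic-character sum shows that for a non-square $\alpha$ this would force $p^{m}=3$, so $\alpha$ is a square in $\mathbb{F}_{p^{m}}^{*}$, and then uniqueness of the translate (because $p\nmid|S_0|$) gives $\alpha\beta_1^{2}=\beta_2^{2}$; a diagonal scaling now normalises $\mathcal{C}$ to $y^{2t}+\beta y^{t}=x^{2t}+\beta x^{t}$ with $\beta\in\mathbb{F}_{p^{m}}^{*}$. Both normal forms read $h(x)=h(y)$, so $x-y$ divides the defining polynomial and $\mathcal{C}$ is reducible; conversely, each listed equation has $f=g$ an MVSP with $V_f=V_g$, so its components are $\mathbb{F}_q$-Frobenius nonclassical by Proposition~\ref{borges-MVSPcondition}. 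I expect the scalar-matching to be the main obstacle: a priori two linearised maps, or two images of a quadratic, may have the same value set without being diagonally equivalent, and it is precisely the trace-duality computation (type~A) and the character-sum estimate (type~B) that rule this out.
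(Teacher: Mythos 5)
Your proposal is correct in outline and shares the paper's skeleton: reduce via Proposition \ref{borges-MVSPcondition} to classifying pairs of MVSP binomials with a common value set (using Propositions \ref{Vf>2} and \ref{Vf>22}), then normalize by a diagonal rescaling using surjectivity of the norm $x\mapsto x^t$. Where you diverge is in the two ``scalar-matching'' computations. For type A the paper first rescales $x$ to make $\alpha=1$ and then compares the two explicit additive polynomials whose zero sets are the value sets (same degree, same simple roots, hence equal), which yields $\ell=\ell'$, $s=s'$, $\beta=\tilde\beta$ in one stroke; you instead identify the value set as a codimension-one $\mathbb{F}_{p^\ell}$-subspace and use the trace pairing to compute its orthogonal line, plus a divisibility argument ($\ell'\mid(s-1)\ell+\ell$ and $\ell'\mid(s-1)\ell$ force $\ell'\mid\ell$, and symmetrically) to match exponents -- this works and is arguably more conceptual, at the cost of re-deriving duality facts the paper already has packaged in the polynomial $\sum_i\frac{(-1)^{i+1}}{\beta^{p^{i\ell}+\cdots+1}}x^{p^{i\ell}}$. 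For type B the paper invokes the $T$-polynomial criterion of \cite{borges-separated} to extract $\alpha^{(p^m+1)/2}=\alpha$ and $\beta^2=\tilde\beta^2$, whereas you argue directly on value sets via $\alpha S_0=S_0+c$. Your translate-uniqueness step ($p\nmid\#S_0$) is fine, but the character-sum step is slightly short of what you claim: the exact evaluation $\sum_{u}\chi(u^2+c)=-1$ combined with your hypothesis gives $\chi(c)=p^m-4$, which rules out nonsquare $\alpha$ only for $p^m\notin\{3,5\}$; the case $p^m=5$ (which is in scope, since $\#V_f=3>2$ there) must be excluded by a direct inspection of $S_0=\{0,1,4\}$ and its translates. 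That is a one-line patch, not a structural flaw. Your mixed-type exclusion by cardinality ($p^{(s-1)\ell}$ versus $(p^m+1)/2$) makes explicit something the paper leaves implicit, and your converse direction matches the paper's.
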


\begin{proof}
  If $f$ and $g$ are MVSPs, then, by analyzing the possibilities in Propositions~\ref{Vf>2} and~\ref{Vf>22} under the condition that $\# V_f = \# V_g$, we see that there are two possible equations for the curve $\mathcal{C}$:
    \begin{itemize}
        \item $y^{p^{\ell'}t'} + \beta' y^{t'} = \alpha x^{p^{\ell}t} + \alpha \beta x^t$, where $t = \frac{q - 1}{p^{s\ell} - 1}$, $t' = \frac{q - 1}{p^{s'\ell'} - 1}, \alpha \in \mathbb{F}_q^*,$ $\beta \in \mathbb{F}_{p^{s\ell}}$, $\beta' \in \mathbb{F}_{p^{s'\ell'}}$, and $N_{\mathbb{F}_{p^{s\ell}}/\mathbb{F}_{p^{\ell}}}(\beta) = (-1)^s$, $N_{\mathbb{F}_{p^{s'\ell'}}/\mathbb{F}_{p^{\ell'}}}(\beta') = (-1)^{s'}$ with $s, s' \geq 2$;

        \item $y^{2t} + \beta y^{t} = \alpha x^{2t}  + \alpha \beta' x^t$, where $p > 2$, $t = \frac{q - 1}{p^m - 1}$ and $\beta, \beta' \in \mathbb{F}_{p^{m}}^*$.
    \end{itemize}

    Let us analyze the first case. We already showed that
    $$V_{y^{p^{\ell}t}+\beta y^t} = Z\left(\sum_{i = 0}^{(s - 1)\ell}\frac{(-1)^{i + 1}}{\beta^{p^{i\ell} + \cdots + 1}}x^{p^{i\ell}}\right).$$
    In particular, it is contained in $\mathbb{F}_{p^{s\ell}}$. Given that $V_f = V_g$, we must have $\alpha \in \mathbb{F}_{p^{s\ell}}$. Therefore, $\frac{1}{\alpha^{p^{\ell(s - 1)}}}$ is in the image of the norm function $x \mapsto x^t$, so $\frac{1}{\alpha^{p^{\ell(s - 1)}}} = \gamma^t$ for some $\gamma \in \mathbb{F}_q$ and we can apply the changing of coordinates $x \mapsto \gamma x$ and consider $\alpha = 1$.
    
    The polynomial $f(x)$ is now given by $x^{p^{\ell}t} + \tilde{\beta}x^t$, with $\tilde{\beta} = \alpha \beta \gamma^t$. The value set of $f$ is now given by
    $$V_f = Z\left(\sum_{i = 0}^{(s - 1)\ell}\frac{(-1)^{i + 1}}{\tilde{\beta}^{p^{i\ell} + \cdots + 1}}x^{p^{i\ell}}\right).$$
    Since $V_f = V_g$, we have
    $$Z\left(\sum_{i = 0}^{(s' - 1)\ell'}\frac{(-1)^{i + 1}}{\tilde{\beta}^{p^{i\ell'} + \cdots + 1}}x^{p^{i\ell'}}\right) = Z\left(\sum_{i = 0}^{(s - 1)\ell}\frac{(-1)^{i + 1}}{\beta^{p^{i\ell} + \cdots + 1}}x^{p^{i\ell}}\right).$$
    These polynomials have the same degree and the same roots, so they must be the same. This implies $\ell = \ell', s = s'$ and $\beta = \tilde{\beta}$.

    Now, consider the second case. As before, we must have $\alpha \in \mathbb{F}_{p^m}$. We analyze, as in the proof of the case $\# V_f = 1$, the existence of a monic polynomial $T$ and $\theta$ such that
    \begin{equation}\label{eq-y}
        T(y^{2t}+\beta y^t) = \theta(y^q - y)(2y^{2t - 1} + \beta y^{t - 1})
    \end{equation}
    and 
    $$T(\alpha x^{2t} + \alpha\beta' x^t) = \theta(x^q - x)(2\alpha x^{2t-1} + \alpha\beta' x^{t-1}).$$
    By analyzing the terms with the highest and the two lowest degrees in Equation \eqref{eq-y}, we conclude that $\theta = 1/2$ and
    $$T(z) = z^{(p^m+1)/2} + \cdots -\frac{z}{2}.$$
    
    By applying $T$ in $f(x)$, we conclude that $\alpha^{(p^m+1)/2} = \alpha$. This implies that $\alpha^{(p^m - 1)/2} = 1$ and, in particular, $\alpha$ is a square in $\mathbb{F}_{p^m}$. As before, we can rescale $x$ and assume $\alpha = 1$ and $f(x) = x^{2t} + \tilde\beta x^t$. Applying $T$ to this polynomial implies $\beta^2 = \tilde\beta^2$. Therefore, $\tilde\beta = \pm \beta$. In the case $\tilde\beta = - \beta$, we can rescale $x$ again to obtain the desired equation for $\mathcal{C}$. Note that
    $$y^{2t}+\beta y^t - x^{2t} - \beta x^t = (y^t + x^t + \beta)(y^t - x^t),$$
    and the quadrinomial curve is the product of a Frobenius nonclassical Fermat curve and $t$ lines defined over $\mathbb{F}_q$.
\end{proof}

\section{Quadrinomial curves of type (ii)}\label{section:quadrinomial2}
In this section, we study quadrinomial curves of type (ii), that is, curves defined by the plane equation
$$
y^a = \alpha x^b + \beta x^c + \gamma, \quad \alpha\beta\gamma \neq 0.
$$
For those curves, we will apply the following result, as well as its corollary (which will help us during some computations).

\begin{proposition}\cite[Theorem 5.3]{borges-separated}\label{borges-condition}
    Let $f\in\mathbb{F}_q[x]$ be a nonconstant polynomial and $m\geq1$ be an integer such that $y^m-f(x)\notin\mathbb{F}_q[x^p,y^p]$. Then, the irreducible components of $\mathcal{C}: y^m-f(x)$ are $\mathbb{F}_q$-Frobenius nonclassical if, and only if, $m\mid q-1$ and $$m\cdot f(x)(f(x)^{(q-1)/m}-1)=(x^{q}-x)f'(x).$$
\end{proposition}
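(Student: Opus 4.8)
The plan is to reduce the statement to the divisibility criterion recalled earlier (a curve $\mathcal{C}$ is $\mathbb{F}_q$-Frobenius nonclassical exactly when its defining polynomial $h$ divides $h_x(x^q-x)+h_y(y^q-y)$) and then carry out an explicit reduction modulo $F(x,y):=y^m-f(x)$. First I would record that the hypothesis $y^m-f(x)\notin\mathbb{F}_q[x^p,y^p]$ forces $F$ to be squarefree: if some irreducible $g$ satisfied $g^2\mid F$, then $g$ would divide $F$, $F_x=-f'(x)$ and $F_y=my^{m-1}$ simultaneously, and a short case analysis on whether $p\mid m$ shows this can only happen when $f'(x)=0$ and $p\mid m$, i.e.\ when $F\in\mathbb{F}_q[x^p,y^p]$. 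Writing $F=\prod_i g_i$ with the $g_i$ distinct irreducible, the product rule gives
\[
F_x(x^q-x)+F_y(y^q-y)=\sum_i \frac{F}{g_i}\Bigl((g_i)_x(x^q-x)+(g_i)_y(y^q-y)\Bigr),
\]
and since $\gcd(g_i,F/g_i)=1$ the divisibility criterion shows that every irreducible component of $\mathcal{C}$ is $\mathbb{F}_q$-Frobenius nonclassical if and only if $F$ divides $\Phi:=-f'(x)(x^q-x)+my^{m-1}(y^q-y)$.

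Next, since $F$ is monic of degree $m$ in $y$, I would reduce $\Phi$ modulo $F$ through the substitution $y^m\equiv f(x)$. Writing $q-1=am+b$ with $0\le b<m$ gives $y^{q+m-1}=(y^m)^{a+1}y^b\equiv f(x)^{a+1}y^b$, hence
\[
\Phi\equiv m f(x)^{a+1}y^b-m f(x)-f'(x)(x^q-x)\pmod{F},
\]
a polynomial of $y$-degree at most $b<m$; so $F\mid\Phi$ if and only if this remainder vanishes identically. If $b\ge1$, the coefficient of $y^b$ must vanish, forcing $m f(x)^{a+1}=0$ and hence $p\mid m$ (as $f$ is nonconstant); the remaining $y$-free part then forces $f'(x)(x^q-x)=0$, i.e.\ $f'(x)=0$, so that $F\in\mathbb{F}_q[x^p,y^p]$, contradicting the hypothesis. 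Therefore $\mathbb{F}_q$-Frobenius nonclassicality forces $b=0$, i.e.\ $m\mid q-1$. When $m\mid q-1$, setting $j:=(q-1)/m$ the remainder is $m f(x)\bigl(f(x)^{j}-1\bigr)-f'(x)(x^q-x)$, and its vanishing is precisely the asserted identity; this proves both implications simultaneously.

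Once $F$ is known to be squarefree the remaining argument is a bounded computation with the division algorithm in $\mathbb{F}_q[x][y]$, so I expect the only real friction to be the first paragraph --- checking that ``all irreducible components are Frobenius nonclassical'' is faithfully encoded by the single divisibility $F\mid\Phi$. This is exactly where the hypothesis $y^m-f(x)\notin\mathbb{F}_q[x^p,y^p]$ is needed (and it is used again, in the case $b\ge1$, to rule out $m\nmid q-1$); one must also handle with care the degenerate subcase $p\mid m$ of the squarefreeness argument, where $F_y$ vanishes.
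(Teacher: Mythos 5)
Your proof is correct. Note that the paper itself gives no argument for this statement: it is imported verbatim as \cite[Theorem 5.3]{borges-separated}, so there is nothing internal to compare against. Your derivation is a sound, self-contained reconstruction from the divisibility criterion recorded in the paper's Remark: the squarefreeness argument is right in both branches (for $p\nmid m$ an irreducible repeated factor would have to be an associate of $y$, impossible since $F(x,0)=-f(x)\neq 0$; for $p\mid m$ it would have to be a factor of $f'(x)$ alone, impossible since $F$ is monic in $y$, unless $f'=0$, which together with $p\mid m$ is exactly the excluded case $F\in\mathbb{F}_q[x^p,y^p]$), the passage from ``each component $g_i$ satisfies $g_i\mid (g_i)_x(x^q-x)+(g_i)_y(y^q-y)$'' to ``$F\mid \Phi$'' via the product rule and $\gcd(g_i,F/g_i)=1$ is valid precisely because $F$ is squarefree, and the division of $\Phi$ by the $y$-monic polynomial $F$ correctly produces the remainder $mf(x)^{a+1}y^{b}-mf(x)-f'(x)(x^q-x)$, whose vanishing forces $b=0$ (i.e.\ $m\mid q-1$) and then the asserted identity. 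The only caveat worth recording is a convention, shared with the cited theorem rather than introduced by you: if some component of $y^m-f(x)$ is an $\mathbb{F}_q$-rational line, the Stöhr--Voloch order sequence for the morphism of lines degenerates, and ``Frobenius nonclassical'' must be read as the divisibility condition itself rather than via a well-defined Frobenius order.
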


\begin{corollary}\cite[Corollary 5.4]{borges-separated}\label{borges-corollary}
    Consider notation and hypothesis as in the previous theorem. If the components of $\mathcal{C}: y^m-f(x)$ are $\mathbb{F}_{q}$-Frobenius nonclassical, then
    \begin{enumerate}
        \item[$\mathrm{(i)}$] $p \nmid m$ and $f'(x) \neq 0$;
        \item[$\mathrm{(ii)}$] $\frac{mq}{m + q - 1} \leq \deg f \leq m$, and the upper bound (resp. lower bound) for $\deg f$ is attained if, and only if, $p \nmid \deg f$ (resp. $f'$ is constant);
        \item[$\mathrm{(iii)}$] if $f = c \prod_{i = 1}^s(x - a_i)^{k_i} \in \overline{\mathbb{F}_q}[x]$, then $a_i \in \mathbb{F}_{q}$ if, and only if, $p \nmid k_i$. In particular, all simple roots of $f$ lie in $\mathbb{F}_{q}$;
        \item[$\mathrm{(iv)}$] $f$ has an $\mathbb{F}_{q}$-root, and if $f$ has a simple root, then $m \equiv 1$ (mod $p$);
        \item[$\mathrm{(v)}$] $m \equiv 1$ (mod $p$) if, and only if, $f'' = 0$;
        \item[$\mathrm{(vi)}$] if an $\mathbb{F}_{q}$-root of $f$ has multiplicity $k$, with $0 < k < \deg f$, then $k \leq \frac{m - 1}{\# V_f}$ and $k \equiv m$ (mod $p$).
    \end{enumerate}
\end{corollary}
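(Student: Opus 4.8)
The plan is to deduce all of (i)--(vi) from the single characterizing identity of Proposition~\ref{borges-condition}: writing $d:=(q-1)/m$, the components of $\mathcal{C}:y^m=f(x)$ are $\mathbb{F}_q$-Frobenius nonclassical precisely when $m\mid q-1$ and
\begin{equation*}
m\,f(x)\bigl(f(x)^{d}-1\bigr)=(x^{q}-x)\,f'(x).\tag{$\star$}
\end{equation*}
I would exploit $(\star)$ in four elementary ways: comparing the degrees of the two sides; comparing the multiplicity (equivalently, the lowest nonzero Taylor coefficient) of the two sides at a root; differentiating $(\star)$ once; and using that $\mathbb{F}_q$ is perfect.

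For (i): $p\nmid q-1$ and $m\mid q-1$ give $p\nmid m$, so $m\neq0$ in $\mathbb{F}_q$; if $f'=0$ then $(\star)$ yields $f(f^{d}-1)=0$, impossible for nonconstant $f$. For (ii): put $n:=\deg f$; comparing degrees in $(\star)$ gives $n(d+1)=q+\deg f'$, and $0\le\deg f'\le n-1$ with right-hand equality exactly when $p\nmid n$; substituting $d+1=(m+q-1)/m$ rearranges this into the stated bounds together with their sharpness criteria. For (iii): if $a$ is a root of $f$ of multiplicity $k$, then $f(a)^{d}-1=-1\neq0$, so $(\star)$ forces the multiplicities of $a$ on the two sides to agree, namely $k=\mathrm{ord}_{a}(x^{q}-x)+\mathrm{ord}_{a}(f')$; since $\mathrm{ord}_{a}(x^{q}-x)$ equals $1$ if $a\in\mathbb{F}_q$ and $0$ otherwise, while $\mathrm{ord}_{a}(f')$ equals $k-1$ if $p\nmid k$ and is $\ge k$ if $p\mid k$ (using $f'\neq0$), only the cases $(a\in\mathbb{F}_q,\ p\nmid k)$ and $(a\notin\mathbb{F}_q,\ p\mid k)$ are consistent. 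This is the claimed equivalence, and it shows simple roots lie in $\mathbb{F}_q$.

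For (iv): if $f$ had no $\mathbb{F}_q$-root, then by (iii) every root multiplicity is divisible by $p$, so $f$ is a $p$-th power in $\overline{\mathbb{F}_q}[x]$ and hence, $\mathbb{F}_q$ being perfect, lies in $\mathbb{F}_q[x^{p}]$, giving $f'=0$ and contradicting (i); and if $a\in\mathbb{F}_q$ is a simple root, writing $f=(x-a)u$ with $u(a)\neq0$ and using $(x^{q}-x)'=-1$, the coefficient of $(x-a)$ in $(\star)$ reads $-m\,u(a)=-u(a)$, so $m\equiv1\pmod p$. For (v): differentiating $(\star)$ and again using $(x^{q}-x)'=-1$ gives
\begin{equation*}
f'\bigl(m(d+1)f^{d}-(m-1)\bigr)=(x^{q}-x)\,f'';
\end{equation*}
since $m(d+1)=(q-1)+m\equiv m-1\pmod p$, the two coefficients of the bracket vanish mod $p$ simultaneously, exactly when $m\equiv1$. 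If $m\equiv1$ the left side is $0$, forcing $f''=0$; conversely, if $f''=0$ then, $f'$ being nonzero and $f^{d}$ nonconstant, the bracket must vanish identically, forcing $m(d+1)\equiv0$, i.e. $m\equiv1\pmod p$.

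Item (vi) is where the real work lies. The congruence $k\equiv m\pmod p$ comes from the Taylor-coefficient comparison of (iv), now matched at order $(x-a)^{k}$ (valid since $p\nmid k$ by (iii)), which gives $-m\,u(a)=-k\,u(a)$. For the inequality $k\le\frac{m-1}{\#V_f}$, the first step is to identify $\#V_f$: substituting an arbitrary $\alpha\in\mathbb{F}_q$ into $(\star)$ kills the right side, so $f(\alpha)=0$ or $f(\alpha)^{d}=1$, whence $\#V_f\le1+d$ (the equation $x^{d}=1$ has exactly $d$ solutions in $\mathbb{F}_q$ since $d\mid q-1$); on the other hand $\sum_{\gamma\in V_f}\#\{\alpha\in\mathbb{F}_q:f(\alpha)=\gamma\}=q$ with each fibre of size at most $\deg f\le m$ (by (ii)), so the integer $\#V_f$ is $\ge q/m=d+\tfrac1m$, hence $\ge d+1$; thus $\#V_f=d+1$ and $m(\#V_f-1)=q-1$, and since (ii) confines $\deg f$ to $[q/(d+1),\,m]$ one gets $\lfloor(q-1)/\deg f\rfloor=d$, i.e. $f$ is a minimal value set polynomial. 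One then feeds $f$ into Theorem~\ref{MVSP-condition}: in the decomposition $f-\gamma_{0}=L_{0}^{\nu}N_{0}^{P}$ with $P=1+\nu(\#V_f-1)$ a power of $p$ (and $f-\gamma_i=L_iN_i^{p}$ for $i\ge1$), the multiplicity of the $\mathbb{F}_q$-root $a$ of $f$ is $\nu+P\cdot\mathrm{ord}_{a}(N_{0})$ or $1+p\cdot\mathrm{ord}_{a}(N_{i})$ according to whether $f(a)=0$ is $\gamma_0$ or not, and reconciling this chunked $p$-adic shape of the root multiplicities with $m(\#V_f-1)=q-1$ and $\deg f\le m$ produces $k\cdot\#V_f\le m-1$. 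I expect this last reconciliation — converting the structure theorem for MVSPs into the sharp multiplicity bound — to be the principal obstacle; the remaining items reduce to routine manipulation of $(\star)$.
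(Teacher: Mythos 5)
This corollary is quoted verbatim from \cite[Corollary 5.4]{borges-separated}; the paper offers no proof of its own, so your attempt can only be judged on its internal completeness. Items (i)--(v) and the congruence $k\equiv m\pmod p$ in (vi) are handled correctly: the degree comparison for (ii), the order-matching at a root for (iii), the perfect-field/$p$-th-power argument for the first half of (iv), and the differentiated identity for (v) (using $m(d+1)=q-1+m\equiv m-1$) are all sound and are exactly the kind of elementary exploitation of the identity $m f(f^{d}-1)=(x^q-x)f'$ that one expects. The identification $\#V_f=d+1$ via the two-sided count is also correct.

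The genuine gap is the inequality $k\le\frac{m-1}{\#V_f}$ in (vi), which you explicitly leave as a ``reconciliation'' you have not carried out. As proposed, the route is shaky for two concrete reasons: Theorem~\ref{MVSP-condition} is only stated for $r>1$, so it does not cover the case $\#V_f=2$ (i.e.\ $m=q-1$), and the decomposition $f-\gamma_0=L_0^{\nu}N_0^{P}$ does not by itself pin down the multiplicity of a given $\mathbb{F}_q$-root, since the hypothesis $L_0\nmid N_0$ does not prevent individual roots of $L_0$ from dividing $N_0$. A cleaner way to close the gap stays local: writing $s=x-a$, $f=s^{k}u$ with $u(0)\neq0$, substituting into the identity, dividing by $s^{k}$ and using $k\equiv m\pmod p$ leaves
\begin{equation*}
su'+m\,s^{kd}u^{d+1}=k\,s^{q-1}u+s^{q}u'.
\end{equation*}
Since the right side has order $q-1>kd$ at $s=0$ and the coefficient $m\,u(0)^{d+1}$ of $s^{kd}$ is nonzero, it must be cancelled by $su'$, forcing $kd\le 1+\deg u'\le n-k$, i.e.\ $k(d+1)\le n\le m$; equality $k(d+1)=m$ is then excluded for $d=1$ by $k\equiv m\pmod p$ and for $d\ge2$ by the resulting vanishing of $\binom{d+1}{1}=d+1$ modulo $p$, which would force $p\mid m$ against (i). Without some argument of this kind, the hardest and most-used part of the corollary (it is invoked in Lemmas \ref{lemm1} and the type-(iii) analysis) remains unproved in your write-up.
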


The following proposition will be very important to prove the irreducibility of some curves of this section.

\begin{proposition}\cite[Lemma I.2C]{schmidt}\label{irreducible}
    The curve given by $\mathcal{C}: y^d = f(x)$ over $\mathbb{F}_q$ is absolutely irreducible if, and only if, $\gcd(d, d_1, \ldots, d_r) = 1$, where 
    $$f(x) = a(x - x_1)^{d_1} \cdots (x-x_r)^{d_r}$$
    is the factorization of $f(x)$ in $\overline{\mathbb{F}_q}$.
\end{proposition}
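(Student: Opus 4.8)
The statement is classical; here is how I would prove it. The curve $\mathcal{C}\colon y^d = f(x)$ is absolutely irreducible exactly when $y^d - f(x)$ is irreducible in $\overline{\mathbb{F}_q}[x,y]$, and since this polynomial is monic (hence primitive) when viewed in $y$ over the UFD $\overline{\mathbb{F}_q}[x]$, Gauss's Lemma reduces the question to the irreducibility of $X^d - f(x)$ over $K := \overline{\mathbb{F}_q}(x)$. The plan is to decide this using the classical Capelli criterion: for a field $K$, an integer $d \geq 2$ and $c \in K^*$, the polynomial $X^d - c$ is irreducible over $K$ if and only if $c \notin K^\ell$ for every prime $\ell \mid d$ and, in addition, $c \notin -4K^4$ whenever $4 \mid d$. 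This criterion is valid in every characteristic, including for the prime $\ell = p$ and for the $4\mid d$ clause when $p = 2$ (where $-4 = 0$, so the clause is vacuous); verifying that it applies in these positive-characteristic cases is the one genuinely delicate point.

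The second ingredient is an explicit description of $\ell$-th powers, and of the set $-4K^4$, inside $K$. Writing $f(x) = a(x-x_1)^{d_1}\cdots(x-x_r)^{d_r}$, the claim is that $f \in K^\ell$ if and only if $\ell \mid d_i$ for every $i$. Indeed, if $f = h^\ell$ with $h = u/v \in K$ in lowest terms, then $fv^\ell = u^\ell$ together with $\gcd(u,v)=1$ forces $v$ to be constant, so $h \in \overline{\mathbb{F}_q}[x]$; comparing the factorizations of $h^\ell$ and $f$ in $\overline{\mathbb{F}_q}[x]$ then gives $\ell \mid d_i$ for all $i$ (the induced condition $a_0^\ell = a$ on leading coefficients is automatic since $\overline{\mathbb{F}_q}$ is algebraically closed). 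The converse is immediate. The same argument, when $p \neq 2$, shows $f \in -4K^4$ if and only if $4 \mid d_i$ for all $i$; and when $p = 2$ one has $-4K^4 = \{0\}\not\ni f$.

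Combining the two ingredients settles both directions. If $\gcd(d, d_1, \ldots, d_r) = 1$, then for each prime $\ell \mid d$ some $d_i$ is not divisible by $\ell$, so $f \notin K^\ell$; and if $4 \mid d$ then $2 \mid d$, so some $d_i$ is odd and hence $f \notin -4K^4$. Thus $X^d - f$ is irreducible by the Capelli criterion, and $\mathcal{C}$ is absolutely irreducible. Conversely, if $g := \gcd(d, d_1, \ldots, d_r) > 1$, choose a prime $\ell \mid g$; then $\ell \mid d_i$ for all $i$, so $f = h^\ell$ with $h \in \overline{\mathbb{F}_q}[x]$ by the previous paragraph, and since $\ell \mid d$ the identity $X^d - f = (X^{d/\ell})^\ell - h^\ell$ exhibits $X^{d/\ell} - h$, of degree $d/\ell < d$, as a proper factor; hence $\mathcal{C}$ is reducible. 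This last direction uses only the elementary implication ``if $c \in K^\ell$ then $X^d - c$ is reducible'', not the full criterion.

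The main obstacle, as indicated, is justifying the Capelli criterion in the positive-characteristic cases: when $p \mid d$ the prime $\ell = p$ is among those to be checked, and there $X^{p^k} - c = (X^{p^{k-1}} - c')^p$ over $\overline{K}$, with $c'$ the unique $p$-th root of $c$, so $X^p - c$ is irreducible over $K$ precisely when $c \notin K^p$ — consistent with the stated criterion once the reduction to prime-power exponents is in place. One can either invoke a characteristic-free reference for Capelli's theorem or reprove the needed prime-power instances directly along these lines; everything else (Gauss's Lemma, the UFD bookkeeping, and the $\gcd$ manipulation) is routine. An alternative, more geometric proof — analysing the ramification of the places $x = x_i$ in the extension $K(y)/K$, using $d\cdot v_Q(y) = e(Q\mid P_i)\, d_i$ — is also available, but it becomes delicate because one must control the ramification at all places simultaneously (and wild ramification when $p \mid d$), so I would favour the algebraic argument above.
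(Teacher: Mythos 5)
The paper does not prove this statement at all --- it is quoted verbatim as \cite[Lemma I.2C]{schmidt} --- so there is no internal proof to compare against. Your argument is correct and self-contained modulo one external input: the reduction via Gauss's Lemma to the irreducibility of $X^d-f$ over $K=\overline{\mathbb{F}_q}(x)$, the characterization of $K^\ell$ (and of $-4K^4$) inside $\overline{\mathbb{F}_q}[x]$ by divisibility of the multiplicities $d_i$ (with the leading-coefficient condition absorbed by algebraic closedness), and the final gcd bookkeeping are all sound, and you correctly use only the trivial direction of Capelli for the reducibility half. The one load-bearing citation is the Vahlen--Capelli criterion in arbitrary characteristic, including the prime $\ell=p$ and the vacuity of the $-4K^4$ clause when $p=2$; this is indeed valid as stated (e.g.\ Lang, \emph{Algebra}, Ch.~VI, Thm.~9.1, which is characteristic-free), and your sketch of the $\ell=p$ case ($X^p-c$ is irreducible iff $c\notin K^p$) is the correct way to see why. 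Schmidt's own proof in \cite{schmidt} is more elementary and avoids naming Capelli, but the content is essentially the same, so nothing is lost by your route. A cosmetic remark: the edge cases $d=1$ and $f$ constant ($r=0$, where the gcd degenerates to $d$) are consistent with your argument but could be dispatched in a sentence for completeness.
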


As in the previous section, we divide our analysis into cases. Throughout this section, we assume $f(x) = \alpha x^b + \beta x^c + \gamma$ and $g(y) = y^a$. Since $\# V_g = 1 + \frac{q - 1}{a}$, there are no $\mathbb{F}_q$-Frobenius nonclassical quadrinomial curves of type (ii) such that $\# V_f = 1$.

\begin{proposition}
Let $\mathcal{C} \colon g(y) = f(x)$ be a quadrinomial curve of type (ii) such that $\# V_f = 2$ and whose irreducible components are defined over $\mathbb{F}_q$. Then these components are $\mathbb{F}_q$-Frobenius nonclassical if, and only if, after an $\mathbb{F}_q$-projectivity, one of the following conditions holds:
\begin{itemize}
    \item[(i)] $\mathbb{F}_4 \subseteq \mathbb{F}_q$ and $y^{q - 1} = x^{\frac{2(q - 1)}{3}} + x^{\frac{q - 1}{3}} + 1$;
    \item[(ii)] $p = 3$ and $y^{q - 1} = x^{q - 1} + x^{\frac{q - 1}{2}} + 1$.
\end{itemize}
\end{proposition}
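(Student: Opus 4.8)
The plan is to reduce to the classification of binomial minimal value set polynomials via Proposition~\ref{borges-MVSPcondition}. Write $f(x)=\alpha x^{b}+\beta x^{c}+\gamma$ with $b>c\geq 1$ and $\gamma\in\mathbb{F}_q^{*}$ (the constant term of a type-(ii) quadrinomial is nonzero), and $g(y)=y^{a}$. If the irreducible components of $\mathcal{C}$ are $\mathbb{F}_q$-Frobenius nonclassical, then $f$ and $g$ are MVSPs with $V_f=V_g$; since $\#V_g=1+(q-1)/a$, the hypothesis $\#V_f=2$ forces $a=q-1$ and $V_f=V_{y^{q-1}}=\{0,1\}$. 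Now $f(x)-\gamma=\alpha x^{b}+\beta x^{c}$ is a binomial of the same degree as $f$ with $\#V_{f-\gamma}=\#V_f=2$, hence it is a binomial MVSP and Proposition~\ref{Vf=2} applies to it: up to the factor $\alpha$, it is $x^{q-1}+\beta_0 x^{(q-1)/2}$ with $p>2$ and $\beta_0^{2}=1$ (so $V_{f-\gamma}=\{0,2\alpha\}$), or $x^{2(q-1)/3}+\beta_0 x^{(q-1)/3}$ with $p=2$ and $\beta_0^{3}=1$ (so $V_{f-\gamma}=\{0,\alpha\beta_0^{2}\}$). In either case $V_f=\{\gamma,\gamma+c_0\}=\{0,1\}$, where $c_0$ is the nonzero element of $V_{f-\gamma}$; since $\gamma\neq 0$ this gives $\gamma=1$ and $c_0=-1$, which determines $\alpha$. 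The leftover ambiguity (the value of $\beta_0$ and the overall scalar) is removed by an $\mathbb{F}_q$-scaling $x\mapsto\lambda x$, using $\lambda^{q-1}=1$ and that $\lambda^{(q-1)/2}$, resp. $\lambda^{(q-1)/3}$, ranges over the square, resp. cube, roots of unity.

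In characteristic $2$ this yields $f(x)=\beta_0 x^{2(q-1)/3}+\beta_0^{2}x^{(q-1)/3}+1$, which after rescaling $x$ becomes $x^{2(q-1)/3}+x^{(q-1)/3}+1$; the exponents are integral precisely when $3\mid q-1$, i.e. $\mathbb{F}_4\subseteq\mathbb{F}_q$, so we are in case~(i). The converse here is immediate: $f$ and $y^{q-1}$ are then MVSPs with common value set and $\#V_f=2=p$, so the second part of Proposition~\ref{borges-MVSPcondition} gives that the irreducible components of $\mathcal{C}$ are $\mathbb{F}_q$-Frobenius nonclassical.

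In characteristic $>2$ the same bookkeeping produces the candidate $\mathcal{C}\colon y^{q-1}=-\tfrac12 x^{q-1}-\tfrac{\beta_0}{2}x^{(q-1)/2}+1$ with $\beta_0^{2}=1$. Setting $u=x^{(q-1)/2}$ one factors $f(x)=-\tfrac12(u-u_1)(u-u_2)$, where $u_1,u_2$ are the roots of $u^{2}+\beta_0 u-2$; its discriminant is $\beta_0^{2}+8=9$, so $u_1\neq u_2$ (and $u_1u_2=-2\neq0$) unless $p=3$. Since $p\nmid q-1$, Corollary~\ref{borges-corollary} is available. If $p\neq 3$, then $f$ has $q-1$ distinct simple roots; by Corollary~\ref{borges-corollary}(iii) these lie in $\mathbb{F}_q$, and then part~(iv) forces $q-1\equiv 1\pmod p$, i.e. $p=2$, a contradiction. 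Hence $p=3$, where $-\tfrac12=1$, so $f(x)=x^{q-1}+\beta_0 x^{(q-1)/2}+1$; a final rescaling of $x$ normalizes $\beta_0$ to $1$, giving case~(ii). For the converse in characteristic $3$ one has $f(x)=(x^{(q-1)/2}-1)^{2}$, and either one checks the criterion of Proposition~\ref{borges-condition} directly --- with $m=q-1$ and $t=x^{(q-1)/2}$ the identity $(q-1)f(f-1)=(x^{q}-x)f'$ reduces, after cancelling the common factor $t(t+1)$, to $(t-1)^{2}=t^{2}+t+1$, which holds in characteristic $3$ --- or one notes that $\mathcal{C}$ splits into the two irreducible Fermat-type curves $y^{(q-1)/2}=\pm(x^{(q-1)/2}-1)$, which are $\mathbb{F}_q$-Frobenius nonclassical by \cite[Theorem 2]{garcia1} precisely because $p=3$.

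I expect the main obstacle to be the characteristic-$3$ dichotomy in the case $p>2$: one must recognise that the coefficients $-\tfrac12$ and $-\tfrac{\beta_0}{2}$ arise from the quadratic $u^{2}+\beta_0 u-2$, whose discriminant $9$ vanishes only in characteristic $3$, and then combine the resulting factorisation of $f$ with Corollary~\ref{borges-corollary} to rule out every other odd characteristic, before verifying the converse identity of Proposition~\ref{borges-condition} (or the nonclassicality of the Fermat components) by hand. The remaining steps --- applying Proposition~\ref{borges-MVSPcondition}, invoking Proposition~\ref{Vf=2}, and solving for $\gamma$ and $\alpha$ --- are routine.
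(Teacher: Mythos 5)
Your proposal is correct and its overall skeleton coincides with the paper's: force $a=q-1$ from $\#V_g=1+(q-1)/a$, feed the binomial $f-\gamma$ into Proposition~\ref{Vf=2}, solve $V_f=\{0,1\}$ for $\gamma$ and $\alpha$, normalize by rescaling $x$, and verify sufficiency separately in the case $\#V_f=2\neq p$. The one place where you genuinely diverge is the exclusion of odd characteristics $p>3$: the paper invokes the auxiliary polynomial $T$ of \cite[Theorem 3.4]{borges-separated}, computes $T(z)=-z^2+z$ from the $y$-side, and gets a contradiction by substituting $f$; you instead factor $f=-\tfrac12(u-u_1)(u-u_2)$ in $u=x^{(q-1)/2}$, observe that the discriminant $\beta_0^2+8=9$ vanishes only for $p=3$, deduce that $f$ has a simple root whenever $p>3$, and then apply Corollary~\ref{borges-corollary}(iv) to force $q-1\equiv 1\pmod p$, i.e.\ $p=2$, a contradiction. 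Your route avoids computing $T$ altogether and makes the special role of $p=3$ transparent (it is exactly where $f$ degenerates to a square, which is also why the curve becomes reducible into Fermat components); the paper's route is more uniform with its treatment of the other cases. Both the necessity argument and your two verifications of sufficiency for $p=3$ (the direct identity of Proposition~\ref{borges-condition}, and the splitting into Fermat curves handled by \cite[Theorem 2]{garcia1}) check out.
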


\begin{proof}
Since $\#V_g = 1 + \dfrac{q - 1}{a}$, we must have $a = q - 1$, so $g(y) = y^{q - 1}$ and $V_g = \{0, 1\}$. Assume first that $p = 2$. Then, by Propositions~\ref{borges-MVSPcondition} and~\ref{Vf=2}, $\mathcal{C}$ is $\mathbb{F}_q$-Frobenius nonclassical if, and only if,
$$
f(x) = \alpha x^{\frac{2(q - 1)}{3}} + \alpha \beta x^{\frac{q - 1}{3}} + \gamma,
$$
with $\beta^3 = 1$ and $V_f = V_g$. Since $\{0,1\} = V_f = \{\alpha \beta^2 + \gamma, \gamma\}$, we get $\gamma = 1$ and $\alpha = \beta$. Since $\beta^2$ is in the image of the norm function $x \mapsto x^{\frac{q-1}{3}}$, we can apply a suitable $\mathbb{F}_q$-projectivity to put $\mathcal{C}$ in the desired form. An analysis with the derivative of $f(x) = x^{2(q-1)/3} + x^{(q-1)/3} + 1$ shows that this polynomial does not have a multiple root. From Proposition \ref{irreducible}, we conclude that $\mathcal{C}$ is an algebraically irreducible curve.

Now suppose that $p > 2$. If the irreducible components of $\mathcal{C}$ are $\mathbb{F}_q$-Frobenius nonclassical, then
$$
f(x) = \alpha x^{q - 1} + \alpha \beta x^{\frac{q - 1}{2}} + \gamma,
$$
with $\beta^2 = 1$ and $V_f = V_g$. In this case, $\{0,1\} = V_f = \{2\alpha + \gamma, \gamma\}$ implies $\gamma = 1$ and $\alpha = -1/2$. The polynomial $f(x)$ will then be
$$f(x) = -\frac{x^{q - 1}}{2} - \frac{\beta x^{(q - 1)/2}}{2} + 1.$$
Applying Proposition \ref{borges-condition} and comparing the coefficients of $x^{2q - 2}$ on both sides of the associated equation (namely, $-\frac{1}{4}$ on the left-hand side and $\frac{1}{2}$ on the right-hand side) we conclude that the irreducible components of $\mathcal{C}$ are $\mathbb{F}_q$-Frobenius nonclassical if, and only if, $p = 3$. In this case, we have $\mathcal{C}$ as stated, and one can check that 
$$y^{q-1} - x^{q-1} - x^{\frac{q-1}{2}} - 1 = \left(y^{\frac{q-1}{2}}-(x^{\frac{q-1}{2}}-1) \right) \cdot \left(y^{\frac{q-1}{2}}+(x^{\frac{q-1}{2}}-1) \right),$$
so the quadrinomial curve is the product of two $\mathbb{F}_q$-Frobenius nonclassical Fermat curves. Since this curve is reducible, it is not included in the classification of irreducible curves in Theorem B.

\end{proof}

\begin{proposition}
Let $\mathcal{C} \colon g(y) = f(x)$ be a quadrinomial curve of type (ii) such that $\# V_f > 2$ and whose irreducible components are defined over $\mathbb{F}_q$. Then these components are $\mathbb{F}_q$-Frobenius nonclassical if, and only if, it is defined by one of the following plane equations:
\begin{itemize}
    \item[(i)] $y^{(p^{\ell}+1)t} = x^{p^{\ell}t} + x^t + 1$, where $t = \dfrac{q - 1}{p^{2\ell} - 1}$;
    
    \item[(ii)] $y^{2t} = (ax^t + b)^2$, where $p > 2,$ $t = \dfrac{q - 1}{p^m - 1}$ and $a,b \in \mathbb{F}_{p^m}.$
\end{itemize}
\end{proposition}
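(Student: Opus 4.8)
The plan is to run the same machinery as in the preceding proposition: reduce via Proposition~\ref{borges-MVSPcondition} to a question about minimal value set polynomials with a common value set, apply Theorem~A to pin down $f$, and then match value sets. Write $f(x)=\alpha x^{b}+\beta x^{c}+\gamma$ and $g(y)=y^{a}$. If the components of $\mathcal{C}$ are $\mathbb{F}_q$-Frobenius nonclassical, then by Proposition~\ref{borges-MVSPcondition} $f$ and $g$ are MVSPs with $V_f=V_g$; since $y^a$ is an MVSP exactly when $a\mid q-1$, put $N:=(q-1)/a$, so $\#V_g=N+1$ and the hypothesis $\#V_f>2$ forces $N\geq 2$. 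Then $\tilde f:=f-\gamma$ is a minimal value set binomial with $\#V_{\tilde f}=\#V_f>2$, so by Theorem~A, up to a nonzero scalar it is of type (v) or (vi); hence either $f(x)=\alpha\bigl(x^{p^{\ell}t}+\beta' x^{t}\bigr)+\gamma$ with $t=\frac{q-1}{p^{s\ell}-1}$, $\beta'\in\mathbb{F}_{p^{s\ell}}$, $N_{\mathbb{F}_{p^{s\ell}}/\mathbb{F}_{p^{\ell}}}(\beta')=(-1)^{s}$, $s\geq 2$, or $f(x)=\alpha\bigl(x^{2t}+\beta' x^{t}\bigr)+\gamma$ with $p>2$, $t=\frac{q-1}{p^{m}-1}$, $\beta'\in\mathbb{F}_{p^{m}}^{*}$. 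In both cases $V_g=\{0\}\cup C_a$, where $C_a=\{x^{a}:x\in\mathbb{F}_q^{*}\}$ is the subgroup of $\mathbb{F}_q^{*}$ of order $N$.

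For the type-(v) candidate I would invoke Proposition~\ref{Vf>2}: $V_{\tilde f}$ is a scalar multiple of the zero set of a $p^{\ell}$-linearized polynomial, hence an $\mathbb{F}_{p^{\ell}}$-subspace of $\mathbb{F}_{p^{s\ell}}$ of dimension $s-1$. Since $0\in V_f=V_{\tilde f}+\gamma$ and $V_{\tilde f}$ is an additive subgroup, $\gamma\in V_{\tilde f}$, so $V_f=V_{\tilde f}$. Thus $C_a\cup\{0\}=V_{\tilde f}$ is simultaneously a multiplicative subgroup together with $0$ and an $\mathbb{F}_{p^{\ell}}$-subspace, hence a subfield $\mathbb{F}_{p^{j}}$ with $N=p^{j}-1$ and $j=(s-1)\ell$; being contained in $\mathbb{F}_{p^{s\ell}}$ forces $(s-1)\ell\mid s\ell$, i.e. $s=2$. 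Then $N=p^{\ell}-1$ and $a=\frac{q-1}{p^{\ell}-1}=(p^{\ell}+1)t$ with $t=\frac{q-1}{p^{2\ell}-1}$. Setting $u=x^{t}$, which runs over $\mathbb{F}_{p^{2\ell}}$, we have $f(x)=\alpha u^{p^{\ell}}+\alpha\beta' u+\gamma$, and the norm condition reads $\beta'^{\,p^{\ell}+1}=1$; a diagonal $\mathbb{F}_q$-projectivity $x\mapsto\lambda x$, $y\mapsto\mu y$ then normalises this (first take $\lambda$ with $\lambda^{(p^{\ell}-1)t}=\beta'$ to make $\beta'=1$, then use that $\lambda^{t}$ and $\mu^{a}$ each range over all of $\mathbb{F}_{p^{\ell}}^{*}$ to send $\alpha$ and $\gamma$ to $1$), giving form~(i).

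For the type-(vi) candidate, completing the square gives $f(x)=\alpha\bigl(x^{t}+\tfrac{\beta'}{2}\bigr)^{2}+c_0$ with $c_0=\gamma-\tfrac{\alpha\beta'^{2}}{4}$, and since $x^{t}$ runs over $\mathbb{F}_{p^{m}}$ we get $V_f=\alpha\,(\{0\}\cup Q)+c_0$, where $Q$ is the set of nonzero squares of $\mathbb{F}_{p^{m}}$. Matching $\#V_f=\#V_g$ yields $N=\tfrac{p^{m}-1}{2}$ and $a=2t$ with $t=\frac{q-1}{p^{m}-1}$, and since $C_a=\{x^{2t}\}=Q$, the requirement $V_f=V_g$ becomes $\alpha\,(\{0\}\cup Q)+c_0=\{0\}\cup Q$. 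The right-hand side is closed under multiplication by $Q$, and using this together with $\#V_f>2$ (so $p^{m}\geq 5$) one shows $c_0=0$ and $\alpha\in Q$; the alternative, a genuine nonzero translate, would force every additive translation of $\{0\}\cup Q$ onto itself, which is impossible (Corollary~\ref{borges-corollary}(iv) on simple roots may also be used here). Then $\alpha=a_1^{2}$ for some $a_1\in\mathbb{F}_{p^{m}}^{*}$ and $f(x)=\bigl(a_1x^{t}+\tfrac{a_1\beta'}{2}\bigr)^{2}$, which is form~(ii).

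For the converse, each of the two curves satisfies the hypotheses of the converse part of Proposition~\ref{borges-MVSPcondition}: $f$ is a binomial plus a constant whose binomial part $\tilde f=f-\gamma$ is, after scaling, of type (v) (with $s=2$, $\beta'=1$) or (vi), hence an MVSP, and the value-set formulas of Propositions~\ref{Vf>2} and~\ref{Vf>22} give $V_f=V_{y^{a}}$ (equal to $\mathbb{F}_{p^{\ell}}$ in the first case, and to the set of squares of $\mathbb{F}_{p^{m}}$ in the second); since $\#V_f>2$, the components are $\mathbb{F}_q$-Frobenius nonclassical. The crux of the argument is the forward direction of the type-(v) case: extracting from the single equality $V_f=V_g$ that $C_a\cup\{0\}$ is a subfield — which is exactly what collapses $s$ to $2$ — and then carrying out the projectivity bookkeeping; ruling out the non–perfect-square alternative in the type-(vi) case is the other point requiring care.
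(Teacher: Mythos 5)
Your overall strategy is sound, and in its second half it is genuinely different from the paper's. The paper also starts from Proposition~\ref{borges-MVSPcondition} and the classification of minimal value set binomials, but then, in the first case, it feeds the candidate $f$ into the explicit functional equation of Proposition~\ref{borges-condition} to extract $s=2$ and the coefficient conditions, while in the second case it descends to $\tilde f(x)=\alpha x^2+\alpha\beta x+\gamma$ over $\mathbb{F}_{p^m}$ and invokes \cite[Lemma 2.6]{borges-separated} to conclude $\tilde f=(ax+b)^2$. You instead work purely with the equality of value sets, exploiting the additive and multiplicative structure of $V_g=\{0\}\cup C_a$; this is legitimate, since for $\#V_f>2$ the value-set condition is both necessary and sufficient, and your observation that $V_f$ must be a subfield is a nice conceptual explanation of why $s$ collapses to $2$.

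Two steps, however, need repair. First, in the type-(v) case you assert that $V_{\tilde f}=\alpha\,Z(L)$ is a subspace \emph{of} $\mathbb{F}_{p^{s\ell}}$ and then use the containment $\mathbb{F}_{p^{(s-1)\ell}}=V_f\subseteq\mathbb{F}_{p^{s\ell}}$ to force $(s-1)\ell\mid s\ell$ and hence $s=2$. But $Z(L)\subseteq\mathbb{F}_{p^{s\ell}}$ only yields $\alpha Z(L)\subseteq\mathbb{F}_{p^{s\ell}}$ when $\alpha\in\mathbb{F}_{p^{s\ell}}$, which is not known at that point; a priori $V_f$ could be a subfield of order $p^{(s-1)\ell}$ not contained in $\mathbb{F}_{p^{s\ell}}$ (both $(s-1)\ell$ and $s\ell$ can divide $n$ with $s\geq3$). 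The conclusion is still true and can be recovered: writing $V_{\tilde f}=\alpha W$ with $W\subseteq\mathbb{F}_{p^{s\ell}}$ an $(s-1)$-dimensional $\mathbb{F}_{p^\ell}$-space, every ratio $w_1/w_2$ of nonzero elements of $W$ equals $(\alpha w_1)/(\alpha w_2)\in\mathbb{F}_{p^{(s-1)\ell}}$ and also lies in $\mathbb{F}_{p^{s\ell}}$, hence lies in $\mathbb{F}_{p^{\gcd((s-1)\ell,\,s\ell)}}=\mathbb{F}_{p^{\ell}}$, so $W$ is a line and $s=2$ --- but some such argument must be supplied. Second, in the type-(vi) case the elimination of a nonzero translate ($c_0=0$) is only gestured at; it does follow, for instance by summing the identity $\alpha S+c_0=S$ over $S=\{0\}\cup Q$ (for $p^m>3$ one has $\sum_{x\in S}x=0$, whence $\tfrac{p^m+1}{2}\,c_0=0$ and $c_0=0$), but as written ``which is impossible'' is not a proof. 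With these two patches, and the routine bookkeeping of the diagonal rescalings, your argument goes through and recovers the stated classification.
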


\begin{proof}
If $\mathcal{C}$ is $\mathbb{F}_q$-Frobenius nonclassical, then by Proposition~\ref{borges-MVSPcondition} we have $g(y) = y^a$ with $a \mid (q - 1)$ and 
$$
1 + \frac{q - 1}{a} = \# V_g = \# V_f,
$$ 
where $f(x)$, by Propositions~\ref{Vf>2} and~\ref{Vf>22}, must have of one of the following forms:
\begin{itemize}
    \item $f(x) = \alpha x^{p^{\ell}t} + \alpha \beta x^t + \gamma$, where $t = \dfrac{q - 1}{p^{s\ell} - 1}$, $\beta \in \mathbb{F}_{p^{s\ell}}$, $\gamma \in \mathbb{F}_q$, and $N_{\mathbb{F}_{p^{s\ell}}/\mathbb{F}_{p^{\ell}}}(\beta) = (-1)^s$ with $s \geq 2$;

    \item $f(x) = \alpha x^{2t} + \alpha \beta x^t + \gamma$, where $p > 2$, $t = \dfrac{q - 1}{p^m - 1}$, $\beta \in \mathbb{F}_{p^m}$, and $\gamma \in \mathbb{F}_q^*$.
\end{itemize}

We start with the first case. Here, $\# V_f = p^{(s - 1)\ell}$, so from the equality $\#V_g = \#V_f$, we deduce that $a = \dfrac{q - 1}{p^{(s - 1)\ell} - 1}$. Then, by Proposition~\ref{borges-condition}, $\mathcal{C}$ is $\mathbb{F}_q$-Frobenius nonclassical if, and only if, $s = 2$, $\alpha = (\alpha \beta)^{p^{\ell}}$, $\alpha \beta \in \mathbb{F}_{p^{2\ell}}^*$, and $\gamma \in \mathbb{F}_{p^{\ell}}^*$. Since the norm map is surjective, up to projective equivalence the curve $\mathcal{C}$ takes the form:
$$
y^{(p^{\ell} + 1)t} = x^{p^{\ell}t} + x^t + \gamma,
$$
where $t = \dfrac{q - 1}{p^{2\ell} - 1}$ and $\gamma \in \mathbb{F}_{p^{\ell}}^*$. Since $\gamma$ is in the image of the norm functions $x \mapsto x^t$ and $y \mapsto y^{(p^\ell + 1)t}$, we can rescale both $x$ and $y$ to put $\mathcal{C}$ in the desired form.

Now consider the second case. Here, $\#V_f = 1 + \dfrac{p^m - 1}{2}$, hence $a = 2t$. Since $V_g \subseteq \mathbb{F}_{p^m}$ and we require $V_f = V_g$, it follows that $\alpha, \gamma \in \mathbb{F}_{p^m}$. 
Let $\tilde{f}(x) := \alpha x^2 + \alpha \beta x + \gamma \in \mathbb{F}_{p^m}[x]$, and denote $V_{\tilde{f}}(\mathbb{F}_{p^m})$ and $V_{y^2}(\mathbb{F}_{p^m})$ as the respective value sets of $\tilde{f}$ and $y^2$ over $\mathbb{F}_{p^m}$. Note that both $\tilde{f}$ and $y^2$ are MVSPs over $\mathbb{F}_{p^m}$, and since $V_f = V_g$, we also have $V_{\tilde{f}}(\mathbb{F}_{p^m}) = V_{y^2}(\mathbb{F}_{p^m})$.

If $p^m = 3$, we have $\# V_f = 2$, so this case is not considered here. On the other hand, if $p^m > 3$, by \cite[Lemma 2.6]{borges-separated} we have $\tilde{f}(x) = (ax + b)^2$ for some $a, b \in \mathbb{F}_{p^m}^*$, i.e., $f(x) = (ax^t + b)^2$ with $a,b \in \mathbb{F}_{p^m}$. Thus,  $\mathcal{C}$ is defined by  the following plane equation:
$$
y^{2t} = (ax^t + b)^2,
$$
where $p > 2$ and $t = \dfrac{q - 1}{p^m - 1}$ and $a,b \in \mathbb{F}_{p^m}^*$.
\end{proof}

\begin{remark}
    An analysis with the derivative of $f(x) = x^{p^{\ell}t} + x^t + 1$ shows that this polynomial does not have a multiple root. From Proposition \ref{irreducible}, the first curve of the previous proposition is algebraically irreducible.
    
    On the other hand, one can see that the second curve of the previous proposition is reducible. Indeed, it is the product of two Frobenius nonclassical Fermat curves
    $$\mathcal{C}: (y^t - (ax^t + b))(y^t + (ax^t + b)) = 0.$$ Since this curve is reducible, it is not included in the classification of irreducible curves in Theorem B.
\end{remark}

\section{Quadrinomial curves of type (iii)}\label{section:quadrinomial3}

Finally we will study quadrinomial curves of type (iii), that is, curves defined by the plane equation
$$
y^a = \alpha x^b + \beta x^c + \gamma x^d, \quad b > c > d.
$$
The Frobenius nonclassicality of these curves will not be given entirely via MVSPs, and at the end of this section, we will complete the classification of all Frobenius nonclassical quadrinomial curves with separated variables. Throughout this section, we assume $f(x) = \alpha x^{b} + \beta x^c + \gamma x^d$ and $g(y) = y^a$.

Now, as before, if $\# V_f = 1$, there are no $\mathbb{F}_q$-Frobenius nonclassical curve of type (iii). Finally, if $\# V_f = 2$, we obtain $a = q - 1$. Direct computations using Proposition~\ref{borges-condition} imply the following result.

\begin{proposition}
Let $\mathcal{C}$ be a quadrinomial curve of type (iii) such that $\# V_f = 2$ and whose irreducible components are defined over $\mathbb{F}_q$. Then these components are $\mathbb{F}_q$-Frobenius nonclassical if, and only if, $q = 3^{2n}$ and $\mathcal{C}$ is given by
$$\mathcal{C}: y^{q-1} = x^{3(q-1)/4} - x^{2(q-1)/4} + x^{(q-1)/4},$$
or $p = 2$ and $\mathcal{C}$ is defined by one of the following plane equations:
\begin{itemize}
    \item[(i)] $y^{q-1} = x^{4t} + x^{2t} + x^t$, where $q = 2^{3n}$ and $t = \frac{q - 1}{7}$;
    
    \item[(ii)] $y^{q-1} = x^{6t} +  x^{5t} + x^{3t}$, where $q = 2^{3n}$ and $t = \frac{q - 1}{7}$;
    
    \item[(iii)] $y^{q-1} = x^{3t} + x^{2t} + x^t$, where $q = 2^{2n}$ and $t = \frac{q - 1}{3}$.
\end{itemize}
\end{proposition}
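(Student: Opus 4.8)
The plan is to turn the problem into a single polynomial identity via Proposition~\ref{borges-condition} and then read off the admissible exponents from a degree-and-coefficient analysis of that identity.

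As noted just before the statement, $\#V_f=2$ forces $a=q-1$, so $g(y)=y^{q-1}$; by Proposition~\ref{borges-MVSPcondition}, the components of $\mathcal C$ being $\mathbb F_q$-Frobenius nonclassical requires $f$ to be an MVSP with $V_f=V_g=\{0,1\}$. Write $f(x)=\alpha x^{b}+\beta x^{c}+\gamma x^{d}$ with $b>c>d\ge 1$ and $\alpha\beta\gamma\neq 0$, so that $0$ is a root of $f$ of multiplicity $d<b=\deg f$. Applying Proposition~\ref{borges-condition} with $m=q-1$ (hence $m\mid q-1$ and $(q-1)/m=1$), the components of $\mathcal C$ are $\mathbb F_q$-Frobenius nonclassical if and only if
\begin{equation}\label{eq:type3-identity}
(q-1)\,f(x)\,(f(x)-1)=(x^{q}-x)\,f'(x),
\end{equation}
equivalently, since $q-1\equiv-1\pmod p$, $f(x)^{2}-f(x)=(x-x^{q})f'(x)$. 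I would also record the relevant constraints of Corollary~\ref{borges-corollary}: $f'\neq 0$; from item (vi) applied to the root $0$ one gets $d\equiv q-1\equiv-1\pmod p$, so in particular $p\nmid d$; and from item (v), $f''=0$ precisely when $p=2$.

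The next step is an analysis of the extremal terms of \eqref{eq:type3-identity}. Its left-hand side has degree $2b$ with leading coefficient $(q-1)\alpha^{2}$, and lowest term $-(q-1)\gamma x^{d}$ (coming from $-f$); its right-hand side has degree $q+\deg f'$ and, since $p\nmid d$, lowest term $-\gamma d\,x^{d}$. Matching lowest-degree terms reconfirms $d\equiv q-1\pmod p$, and matching top degrees splits the argument by the $p$-adic valuations of $b$ and $c$: if $p\nmid b$ then $\deg f'=b-1$, which forces $b=q-1$ and, comparing leading coefficients, $\alpha=1$; if $p\mid b$ then $\deg f'$ equals $c-1$ or $d-1$ according to whether $p\nmid c$, and comparing degrees gives $2b=q-1+c$ or $2b=q-1+d$. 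In each of these cases one substitutes back into \eqref{eq:type3-identity} and matches coefficients monomial by monomial.

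This coefficient matching is the heart of the argument. Since $f$ is a trinomial, $f^{2}-f$ is supported on a subset of $\{2b,\ b+c,\ b+d,\ 2c,\ c+d,\ 2d,\ b,\ c,\ d\}$ and $(x^{q}-x)f'$ on at most six exponents, so every exponent occurring in \eqref{eq:type3-identity} that is not annihilated by a coincidence of exponents must have coefficient $0$. Because $\alpha^{2},\beta^{2},\gamma^{2}$ — and, when $p$ is odd, $2\beta\gamma$ — are nonzero, this forces a rigid collection of coincidences; carrying out this bookkeeping with $d\equiv-1\pmod p$ in hand, one shows that no configuration survives for odd $p$, so $p=2$, and that over $\mathbb F_{2^{h}}$ the only admissible triples, writing $t=\gcd(b,c,d)$ (which the identity forces to divide $q-1$), are $(b,c,d)=(4t,2t,t)$ and $(6t,5t,3t)$ with $7t=q-1$, and $(b,c,d)=(3t,2t,t)$ with $3t=q-1$, in each case with $\alpha=\beta=\gamma=1$ after rescaling $x$. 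Finally $7\mid 2^{h}-1$ if and only if $3\mid h$, and $3\mid 2^{h}-1$ if and only if $2\mid h$, which give $q=2^{3n}$ and $q=2^{2n}$ respectively. Conversely, a direct substitution verifies \eqref{eq:type3-identity} for each of the three polynomials (for instance, for $f=x^{4t}+x^{2t}+x^{t}$ in characteristic $2$ one has $f^{2}+f=x^{8t}+x^{t}$ and $(x^{q}+x)f'=(x^{q}+x)x^{t-1}=x^{8t}+x^{t}$, using $q-1=7t$), so the components are $\mathbb F_q$-Frobenius nonclassical; and since each such $f$ has a simple root in $\mathbb F_q$, Proposition~\ref{irreducible} shows $\mathcal C$ is absolutely irreducible. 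The main obstacle is exactly this last bookkeeping step: one must keep track of which of the roughly twelve exponents occurring on the two sides of \eqref{eq:type3-identity} can collide — elementary but lengthy, and it is precisely here that the divisibilities $7\mid q-1$ and $3\mid q-1$, and the shapes $q=2^{3n}$, $q=2^{2n}$, emerge.
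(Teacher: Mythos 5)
Your reduction of the problem to the single identity $(q-1)f(f-1)=(x^q-x)f'$ via Proposition~\ref{borges-condition}, your verification of the three listed families, and your even-characteristic bookkeeping (leading to the exponent patterns $(4t,2t,t)$, $(6t,5t,3t)$ with $7t=q-1$ and $(3t,2t,t)$ with $3t=q-1$) are all correct, and this is exactly the route the paper takes (the paper itself offers nothing beyond ``direct computations using Proposition~\ref{borges-condition}'').

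However, there is a genuine error at the step where you assert that the coefficient matching ``shows that no configuration survives for odd $p$, so $p=2$.'' The identity alone does \emph{not} force $p=2$. Take $q=9$ and $i\in\mathbb{F}_9$ with $i^2=-1$, and set $f(x)=ix^6+x^4-ix^2=ix^2(x^2+i)^2$. Then $f'(x)=x^3+ix=x(x^2+i)$ and
\[
f(x)\bigl(f(x)-1\bigr)=ix^2(x^2+i)^2\,(ix^2+1)(x^4-1)=-x^2(x^2+i)(x^8-1)=(x-x^9)f'(x),
\]
so $(q-1)f(f-1)=(x^9-x)f'$ holds and the components of $y^{8}=f(x)$ are $\mathbb{F}_9$-Frobenius nonclassical with $\#V_f=2$. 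This lives precisely in your case $p\mid b$, $p\nmid c$, with $(b,c,d)=(3d,2d,d)$, $4d=q-1$, $p=3$, and it survives every coefficient comparison. What rescues the statement is the hypothesis that $\mathcal{C}$ is \emph{irreducible}: in this odd-characteristic family the relation $2\alpha\gamma+\beta^2=0$ forces the discriminant of $\alpha u^2+\beta u+\gamma$ to vanish, so $f=\alpha x^{d}(x^{d}+\beta/(2\alpha))^{2}$ with $d=(3^h-1)/4$ even, every root multiplicity shares the factor $2$ with $q-1=4d$, and Proposition~\ref{irreducible} shows $y^{q-1}=f(x)$ is reducible. So in the forward direction you must carry the irreducibility hypothesis into the case analysis (or classify all solutions of the identity and then discard the reducible ones); as written, the mechanism you propose for deriving $p=2$ is false, and the proof is incomplete without this extra step.
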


\begin{proof}
 Since $0$ is a root of multiplicity $d$ of $f(x)$, by Corollary~\ref{borges-corollary} (vi) we have
$d \equiv -1 \pmod{p}$. By Proposition \ref{borges-condition}, we have the following equation:
\begin{equation}\label{eqm}
(\alpha x^{b} + \beta x^c + \gamma x^d) - (\alpha x^{b} + \beta x^c + \gamma x^d)^2
= (x^q - x)(\alpha b x^{b-1} + \beta c x^{c-1} - \gamma  x^{d-1}).
\end{equation}
We divide our analysis into three cases:
\\

\noindent (a) \textbf{Case $p \mid b$ and $p \mid c$:}
By comparing the term of highest degree on both sides of Equation~\eqref{eqm}, we obtain $2b = q - 1 + d$ (so
$d \equiv 1 \pmod{p}$ and $p = 2$) and $\alpha^2 = \gamma$. Hence, Equation~\eqref{eqm} (after canceling the terms of highest and lowest degree on both sides) becomes
\begin{equation*}
\alpha x^b + \beta x^c + \beta^2 x^{2c} + \gamma^2 x^{2d} = 0.
\end{equation*}

This yields
\[
\begin{cases}
b = 2c \quad \text{and} \quad \alpha = \beta^2, \\
c = 2d \quad \text{and} \quad \beta = \gamma^2.
\end{cases}
\]
Consequently, $\gamma \in \mathbb{F}_{8}^{*}$ and $8d = 2b = q - 1 + d$.
Therefore, after a suitable change of variables, $\mathcal{C}$ has equation~$(i)$ in the statement.
\\

\noindent (b) \textbf{Case $p \mid b$ and $p \nmid c$:} By comparing the term of highest degree on both sides of Equation~\eqref{eqm}, we now obtain $\beta = - \alpha^2$ and $2b = q - 1 + c$. In particular, $c \equiv 1 \pmod{p}$. By comparing the terms with the second lowest degrees, we see that $2d \geq c$. 

If $c < 2d$, then we must have $p = 2$. Equation \eqref{eqm} now becomes
$$\alpha x^b + \alpha^4 x^{2c} + \gamma^2 x^{2d} = \gamma x^{q - 1 + d}.$$
Therefore, $b = 2d$, $2c = q - 1 + d$, $\alpha = \gamma^2$ and $\gamma = \alpha^4$. After a suitable change of variables, $\mathcal{C}$ has equation~$(ii)$ in the statement.

If $c = 2d$, we must have $p = 3$ (since $d \equiv -1 \pmod{p}$) and Equation \eqref{eqm} now becomes
$$\alpha x^b - (\alpha^2 + \gamma^2) x^{2d} - \alpha^3 x^{b + 2d} + \alpha \gamma x^{b + d} - \alpha^4 x^{4d} - \alpha^2 \gamma x^{3d} = \alpha^2 x^{2d} - \gamma x^{q - 1 + d},$$
and this is only possible if
$$b = 3d,\quad \alpha = \alpha^2\gamma,\quad \alpha^2 = \gamma^2,\quad \alpha^3 = \gamma,\quad d = \frac{q-1}{4},\quad \alpha\gamma = \alpha^4.$$
This implies that $\alpha^4 = 1$. In particular, $\alpha$ is in the image of the function $x \mapsto x^{(q-1)/4}$, so we can rescale $x$ to put $\mathcal{C}$ in the desired form.

Note that, in order to have $4 \mid (q - 1)$, we must have $q = 3^{2n}$. In particular, we also have $8 \mid (q - 1)$. Therefore, decompose the curve $\mathcal{C}$ as
$$\mathcal{C}: (y^{(q - 1)/2} + (x^{3(q - 1)/8} + x^{(q - 1)/8}))(y^{(q - 1)/2} - (x^{3(q - 1)/8} + x^{(q - 1)/8})) = 0.$$
Since $\mathcal{C}$ is reducible,  it is not included in the classification of irreducible curves in Theorem B.
\\

\noindent (c) \textbf{Case $p \nmid b$:} By Corollary~\ref{borges-corollary} (ii), we have $b = q - 1$.
Comparing the term of highest degree on both sides of Equation~\eqref{eqm}, we obtain $\alpha = 1$.
Thus, this equation becomes
\begin{equation*}
\beta x^c - 2\beta x^{q-1+c} - 2\gamma x^{q-1+d}
- \beta^2 x^{2c} - 2\beta\gamma x^{c+d} - \gamma^2 x^{2d}
= \beta c x^{q-1+c} - \gamma x^{q-1+d} - \beta c x^c.
\end{equation*}

There is only the term $-2\beta\gamma x^{c+d}$ of degree $c+d$ on both sides of the previous equation.
Thus, $p = 2$.
Now, this equation becomes
\begin{equation*}
\beta x^c + \beta^2 x^{2c} + \gamma^2 x^{2d}
= \beta c x^{q-1+c} + \gamma x^{q-1+d} + \beta c x^c.
\end{equation*}

This yields $c \equiv 0 \pmod{p}$.
Otherwise, we would have $2d = q - 1 + d$, which is not possible.
Hence,
\[
\begin{cases}
2c = q - 1 + d \quad \text{and} \quad \beta^2 = \gamma, \\
c = 2d \quad \text{and} \quad \beta = \gamma^2.
\end{cases}
\]
Thus, $\gamma \in \mathbb{F}_4^{*}$, and after a change of variables, $\mathcal{C}$ has equation~$(iii)$ in the statement.
\end{proof}

\begin{remark}
    One can check that the curves of items (i) - (iii) from the previous proposition are irreducible. An analysis with the derivatives of
    $$x^{4t} + x^{2t} + x^t,$$
    $$x^{6t} + x^{5t} + x^{3t} = x^{3t}(x^{3t} + x^{2t} + 1)$$
    and
    $$x^{3t} + x^{2t} + x^t = x^t(x^{2t} + x^t + 1)$$
    shows that the only multiple root of these polynomials is $0$. An analysis with the degree of these polynomials now show that they have simple roots in $\overline{\mathbb{F}_q}$. The irreducibility of the curves now comes from Proposition \ref{irreducible}.
\end{remark}

We can now deal with the case $\# V_f > 2$.

\begin{proposition}
Let $\mathcal{C}$ be a quadrinomial curve of type (iii) such that $p \mid b$ and $p \mid c$. Then $\mathcal{C}$ is $\mathbb{F}_q$-Frobenius nonclassical if, and only if,
$$
\mathcal{C}: y^{(p^{2\ell} + p^\ell + 1)t} = x^{p^{2\ell}t} + x^{p^\ell t} + x^t,
$$
where $\ell > 0$ and $t = \dfrac{q - 1}{p^{3\ell} - 1}$.
\end{proposition}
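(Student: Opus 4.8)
The plan is to prove both implications through Proposition~\ref{borges-condition}; since $g(y)=y^{a}$ is a pure power this direct criterion is more convenient here than passing through minimal value set polynomials. For the implication ``$\Leftarrow$'', write $f(x)=x^{p^{2\ell}t}+x^{p^{\ell}t}+x^{t}=u^{p^{2\ell}}+u^{p^{\ell}}+u$ with $u=x^{t}$, so $f$ is additive in $u$. Then $f^{p^{\ell}}-f=u^{p^{3\ell}}-u=x^{t-1}(x^{q}-x)$, while $f'(x)=tx^{t-1}$ because the remaining exponents are divisible by $p$; hence $(x^{q}-x)f'=t\,f\,(f^{p^{\ell}-1}-1)$. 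As $a=(q-1)/(p^{\ell}-1)$ divides $q-1$, $(q-1)/a=p^{\ell}-1$, and $a\equiv t\pmod p$, this is exactly the identity of Proposition~\ref{borges-condition}, so $\mathcal{C}$ is $\mathbb{F}_{q}$-Frobenius nonclassical; and $\mathcal{C}$ is irreducible because $f=x^{t}h$ with $h(0)\neq0$ and $h$ separable (a short $\gcd(h,h')$ computation), so the root multiplicities of $f$ are $t$ and $1$'s, and Proposition~\ref{irreducible} applies.

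For ``$\Rightarrow$'', let $f(x)=\alpha x^{b}+\beta x^{c}+\gamma x^{d}$, $b>c>d>0$, with $p\mid b$, $p\mid c$. Corollary~\ref{borges-corollary}(i) gives $f'\neq0$, hence $p\nmid d$ and $f'(x)=\gamma d\,x^{d-1}$. By Proposition~\ref{borges-condition}, $a\mid q-1$ and, with $m=(q-1)/a$, one has $a(f^{m+1}-f)=\gamma d\,(x^{q+d-1}-x^{d})$. Comparing lowest- and highest-degree terms gives $a\equiv d\pmod p$, $b(m+1)=q-1+d$, and $\alpha^{m+1}=\gamma$; plugging back, the $x^{d}$-terms cancel and $f^{m+1}=\alpha x^{b}+\beta x^{c}+\gamma x^{q+d-1}$ is again a trinomial. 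Writing $f=x^{d}\phi$ with $\phi(x)=\alpha x^{b-d}+\beta x^{c-d}+\gamma$, the polynomial $\phi^{m+1}=x^{-d(m+1)}f^{m+1}$ has nonzero constant term, which forces $c=d(m+1)$ and $\beta=\gamma^{m+1}$. Set $P=b-d$, $Q=c-d$ ($P>Q>0$). The part of $\phi^{m+1}$ of degree $<P$ is supported on multiples of $Q$ and must equal $\beta+\alpha x^{P-Q}$, so $Q\mid P$; inspecting the coefficient of $x^{Q}$ (and handling the easy case $P=2Q$ separately) shows $p\mid m+1$, and then writing $m+1=p^{s}e$ with $p\nmid e$ and $p^{s}$ exactly dividing $m+1$, Lucas' theorem applied to $\binom{m+1}{i}$ for $1\le i\le P/Q-1$ forces $P/Q-1=p^{s}$, i.e.\ $P=(p^{s}+1)Q$ with $s\ge1$.

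The crucial step is to show $e=1$. Raising $\phi^{p^{s}}=\alpha^{p^{s}}x^{Pp^{s}}+\beta^{p^{s}}x^{Qp^{s}}+\gamma^{p^{s}}$ to the $e$-th power and putting $w=x^{Qp^{s}}$ (possible since $P=(p^{s}+1)Q$), the identity for $\phi^{m+1}$ becomes $(\alpha^{p^{s}}w^{p^{s}+1}+\beta^{p^{s}}w+\gamma^{p^{s}})^{e}=\alpha w+\beta+\gamma w^{(p^{s}+1)e}$. Its degree-$<p^{s}+1$ part forces $e\equiv1\pmod p$; assume $e>1$ and write $e=1+p^{j}e'$ with $p\nmid e'$ (so $j\ge1$), and note that $\binom{e}{p^{j}}\not\equiv0\pmod p$ forces $j\ge s+1$. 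Factor the left-hand side as $(\alpha^{p^{s}}w^{p^{s}+1}+\beta^{p^{s}}w+\gamma^{p^{s}})\cdot G(w^{p^{j}})$ with $G(W)=(\alpha^{p^{s+j}}W^{p^{s}+1}+\beta^{p^{s+j}}W+\gamma^{p^{s+j}})^{e'}$; sorting monomials by the residue of their exponent modulo $p^{j}$, the three blocks (exponents $\equiv0$, $\equiv1$, $\equiv p^{s}+1\pmod{p^{j}}$) are pairwise disjoint because $1<p^{s}+1<p^{j}$. Matching the block with exponents $\equiv0\pmod{p^{j}}$ against the right-hand side gives $\gamma^{p^{s}}G(w^{p^{j}})=\beta$, a nonzero constant, which is absurd since $G$ has a nonconstant term ($\alpha,\beta\neq0$, $e'\neq0$). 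Hence $e=1$, so $m+1=p^{\ell}$ with $\ell:=s>0$ and $P=(p^{\ell}+1)Q$.

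Finally, $Q=c-d=dm$ and $P=b-d=(p^{\ell}+1)dm=(m+2)dm$ give $b=d(m+1)^{2}$, and then $b(m+1)=q-1+d$ yields $d\big((m+1)^{3}-1\big)=q-1$, i.e.\ $d=(q-1)/(p^{3\ell}-1)=t$, whence $c=p^{\ell}t$, $b=p^{2\ell}t$ and $a=(q-1)/m=(p^{2\ell}+p^{\ell}+1)t$. Matching the coefficient of $x^{P-Q}$ in $\phi^{p^{\ell}}$ with $\alpha$ gives $\alpha^{p^{3\ell}-1}=1$, so $\alpha$ is a $t$-th power in $\mathbb{F}_{q}$; combined with $\gamma=\alpha^{p^{\ell}}$ and $\beta=\gamma^{p^{\ell}}$, the substitution $x\mapsto\lambda x$ with $\lambda^{t}=\gamma^{-1}$ normalizes $f$ to $x^{p^{2\ell}t}+x^{p^{\ell}t}+x^{t}$, giving the asserted plane equation. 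I expect the main obstacle to be the third paragraph — ruling out $e>1$, i.e.\ proving that a nontrivial power of the normalized trinomial $\phi$ is again a trinomial only when the exponent is a power of $p$ and the exponents are in the forced ratio; this is where the Frobenius reduction together with the residue decomposition modulo a power of $p$ carry the argument.
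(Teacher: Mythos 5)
Your argument is correct in substance and takes a genuinely different route from the paper. The paper's proof is a reduction to its MVSP machinery: it observes that $f$ must be a minimal value set trinomial, reruns the first part of Lemma \ref{lem1} to get $\gamma_0=0$ and $\nu=1$, and then repeats the argument of Proposition \ref{Vf>2} (comparison of terms in identity (f) of Theorem \ref{MVSP-condition}) to force $f(x)=\gamma^{p^{2\ell}}x^{p^{2\ell}t}+\gamma^{p^{\ell}}x^{p^{\ell}t}+\gamma x^{t}$, before rescaling. You instead work directly with the functional identity of Proposition \ref{borges-condition}, turning the whole problem into the question of when a power of a trinomial $\phi=\alpha x^{P}+\beta x^{Q}+\gamma$ is again a trinomial, and you resolve that with Lucas' theorem plus the decomposition of $\Phi^{e}=\Phi\cdot G(w^{p^{j}})$ into residue blocks modulo $p^{j}$. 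Your route is more self-contained (it bypasses Theorem \ref{MVSP-condition} entirely and, in particular, does not rely on the binomial-specific Lemma \ref{lem1} transferring to trinomials, which the paper only sketches), at the cost of more combinatorial bookkeeping; the residue-block argument ruling out $e>1$ is a nice replacement for the paper's appeal to the $\omega_i$ bookkeeping in Proposition \ref{Vf>2}. All the exponent arithmetic ($c=d(m+1)$, $b=d(m+1)^{2}$, $d=t$, $a=(p^{2\ell}+p^{\ell}+1)t$) and the coefficient relations $\gamma=\alpha^{p^{\ell}}$, $\beta=\gamma^{p^{\ell}}$, $\alpha=\beta^{p^{\ell}}$ check out, as does the normalization $\lambda^{t}=\gamma^{-1}$.

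One step you defer is not as automatic as you suggest: the ``easy case $P=2Q$'' in establishing $p\mid m+1$. There the coefficient of $x^{Q}$ is $(m+1)\beta\gamma^{m}=\alpha\neq0$, so your coefficient inspection gives no contradiction, and ruling out $(\alpha u^{2}+\beta u+\gamma)^{m+1}=\gamma u^{2(m+1)}+\alpha u+\beta$ with $p\nmid m+1$ needs an actual argument. The cleanest fix stays within your framework: differentiate $\phi^{m+1}=\gamma x^{P(m+1)}+\alpha x^{P-Q}+\beta$ and use $P\equiv Q\equiv -d\not\equiv0\pmod p$ (which is exactly where the hypothesis $p\mid b$, $p\mid c$ enters) to get $(m+1)\phi^{m}\phi'=-\gamma d(m+1)x^{P(m+1)-1}$; if $p\nmid m+1$ then $\phi^{m}\phi'$ is a nonzero monomial while $\phi(0)=\gamma\neq0$, forcing $\phi$ constant, a contradiction. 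This argument in fact gives $p\mid m+1$ uniformly and lets you drop the case split altogether. With that inserted, your proof is complete.
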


\begin{proof}
Suppose that $\mathcal{C}$ is $\mathbb{F}_q$-Frobenius nonclassical. Then $f(x)$ is an MVSP. Analogously to what we did in Lemma \ref{lem1}, we can differentiate both sides of the identity in Theorem \ref{MVSP-condition} (b), using the fact that $b \equiv c \equiv 0 \pmod{p}$, analyze all possibilities and conclude that $\gamma_0 = 0$ and $\nu = 1$.

We can then proceed as in the proof of Proposition \ref{Vf>2}, but now we will not consider a monic polynomial $f$, because of some relations that (we are yet to find out) the coefficients share. Equation \eqref{eq:1} now reads
$$\sum_{i=0}^m\omega_i(\alpha x^b + \beta x^c + \gamma x^d)^{p^i} = -\omega_0 \gamma(x^{q-1+d} - x^d).$$
An analogous analysis with the terms with highest degrees and how intermediate terms cancel out allow us to conclude that
$$
f(x) = \gamma^{p^{2\ell}} x^{p^{2\ell}t} + \gamma^{p^\ell} x^{p^\ell t} + \gamma x^t,
$$
where $t= \dfrac{q-1}{p^{s\ell} - 1}$ and $\gamma \in \mathbb{F}_{p^{s \ell}}^*$.

Hence, $\# V_f = p^{(s-2)\ell}$. By Proposition \ref{borges-MVSPcondition}, we obtain $g(y) = y^{(q-1)/(p^{(s-2)\ell} - 1)}$. Finally, by Proposition \ref{borges-condition}, the curve $\mathcal{C}$ is $\mathbb{F}_q$-Frobenius nonclassical if, and only if, $s = 3$, which yields $g(y) = y^{(p^{2\ell} + p^\ell + 1)t}$. Since the norm map is surjective, it follows that, after a suitable rescaling of the variable $x$, that $\mathcal{C}$ has the stated form. The irreducibility of this curve comes from Proposition \ref{irreducible}, by noting that the only multiple root of $f(x)$ is $0$.
\end{proof}

Now, let us deal with curves of type (iii) when $p$ does not divide both $b$ and $c$ simultaneously. We will use the following results.

\begin{lemma}\label{lemm1}
Let $\mathcal{C}$ be a curve of type (iii) that is $\mathbb{F}_q$-Frobenius nonclassical. Then $a \equiv d \equiv 1 \pmod{p}$. Moreover, if $p \nmid b$, then $a = b$. Furthermore, if $p \nmid c$, then $c \equiv 1 \pmod{p}$.
\end{lemma}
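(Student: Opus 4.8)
The plan is first to extract everything possible from Corollary~\ref{borges-corollary}, and then to finish by a hands-on analysis of the defining identity of Proposition~\ref{borges-condition}. Throughout, write $f(x)=\alpha x^{b}+\beta x^{c}+\gamma x^{d}$ with $b>c>d\ge 1$, so that $\deg f=b$ and $0$ is a root of $f$ of multiplicity exactly $d$, with $0<d<\deg f$; put $e:=(q-1)/a$. We may assume $\# V_f>2$ and that $p$ does not divide both $b$ and $c$ (the cases $\# V_f\le 2$ and ``$p\mid b,\ p\mid c$'' having been settled in the preceding results, where the conclusions of the lemma are immediate). Corollary~\ref{borges-corollary}(i) gives $p\nmid a$ and $f'\neq 0$; Corollary~\ref{borges-corollary}(ii) gives $\deg f=b\le a$, with $b=a$ precisely when $p\nmid b$, which is exactly the assertion ``if $p\nmid b$ then $a=b$''; and Corollary~\ref{borges-corollary}(vi), applied to the root $0$ of multiplicity $d<\deg f$, gives $d\equiv a\pmod p$, hence $p\nmid d$. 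Thus the whole statement reduces to proving $a\equiv1\pmod p$: then $d\equiv a\equiv1\pmod p$, and if moreover $p\nmid c$ then, since $a\equiv1\pmod p$ is equivalent to $f''=0$ by Corollary~\ref{borges-corollary}(v), the coefficient $\beta c(c-1)$ of $x^{c-2}$ in $f''$ must vanish, forcing $c\equiv1\pmod p$.

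To prove $a\equiv1\pmod p$ I argue by contradiction, assuming $a\not\equiv1\pmod p$. The basic tool is obtained by differentiating the identity $a\,f\,(f^{e}-1)=(x^{q}-x)f'$ of Proposition~\ref{borges-condition}: using $q\,x^{q-1}=0$ and $a(e+1)=a+q-1\equiv a-1\pmod p$, one gets $(a-1)f'\,(f^{e}-1)=(x^{q}-x)f''$, and dividing this by the original identity and cancelling the nonzero factor $f^{e}-1$ yields the polynomial identity
$$(a-1)\,f'(x)^{2}=a\,f(x)\,f''(x).$$
Since $a\not\equiv1\pmod p$ we have $f''\neq 0$ (otherwise $f'=0$, against Corollary~\ref{borges-corollary}(i)), so both sides above are nonzero; I then compare their coefficients at the monomials $x^{2b-2}$, $x^{2c-2}$, $x^{2d-2}$, $x^{b+c-2}$, $x^{b+d-2}$, $x^{c+d-2}$.

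When these exponents are pairwise distinct the argument is short. If $p\nmid b$ and $p\nmid c$: comparison at $x^{2b-2}$ and $x^{2c-2}$ gives $a\equiv b$ and $a\equiv c\pmod p$, whence $b\equiv c\equiv d\equiv a\pmod p$; then $x f'(x)=a f(x)$ as polynomials, and substituting this back into the identity of Proposition~\ref{borges-condition} forces $f(x)^{e}=x^{q-1}$, impossible since $f$ is a genuine trinomial. If $p\mid b$ (so $p\nmid c$): comparison at $x^{b+c-2}$ gives $\alpha\beta c(c-1)=0$, hence $c\equiv1\pmod p$ and $f''=\gamma d(d-1)x^{d-2}$; then comparison at $x^{b+d-2}$ gives $\alpha\gamma d(d-1)=0$, hence $d\equiv1\pmod p$ and $a\equiv d\equiv1\pmod p$. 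If $p\nmid b$ and $p\mid c$: comparison at $x^{b+c-2}$ gives $\alpha\beta b(b-1)=0$, hence $b\equiv1\pmod p$ and $a=b\equiv1\pmod p$. In every case we contradict $a\not\equiv1\pmod p$.

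The main obstacle is the degenerate configuration $2c=b+d$, i.e.\ $b,c,d$ in arithmetic progression, where $x^{2c-2}=x^{b+d-2}$ and the coefficient comparisons become entangled. For instance, in the case $p\mid b$ one still gets $c\equiv1\pmod p$ from the coefficient of $x^{b+c-2}$, but the remaining relations only yield $(a-1)(a-2)\equiv0\pmod p$, leaving the spurious possibility $a\equiv2\pmod p$. To eliminate it, I would return to the full identity of Proposition~\ref{borges-condition}: $a\equiv2\pmod p$ together with $2c=b+d$ forces $\beta^{2}=4\alpha\gamma$, so that $f(x)=\alpha x^{d}(x^{u}+\delta)^{2}$ with $u=c-d$ and $\delta\neq 0$; the Frobenius nonclassical condition then pins down the parameters ($2e+1$ must be a power of $p$, $\deg f=d\,p^{s}$, and $a=d(p^{s}+1)$), after which a short arithmetic shows that the multiplicities of all the roots of $f$ share a common factor with $a$, so that $\mathcal{C}$ is reducible by Proposition~\ref{irreducible}, against the standing hypothesis. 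This arithmetic --- and the careful matching of coefficients to congruences --- is the delicate point; the rest is routine.
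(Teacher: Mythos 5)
Your route is genuinely different from the paper's: the paper gets $a\equiv d\equiv 1\pmod p$ almost for free from the MVSP structure theorem (comparing lowest-degree terms in item (f) of Theorem~\ref{MVSP-condition} when $\gamma_0=0$, and combining items (iv) and (vi) of Corollary~\ref{borges-corollary} when $\gamma_0\neq 0$), whereas you differentiate the identity of Proposition~\ref{borges-condition} to obtain $(a-1)f'^2=a\,f\,f''$ and compare coefficients. That derived identity is correct, and your treatment of the non-degenerate configurations (the six exponents pairwise distinct) is complete and convincing; the reduction of the remaining claims to ``$a\equiv 1\pmod p$'' via Corollary~\ref{borges-corollary}(i),(ii),(v),(vi) is also fine. (One small omission: when $2c=b+d$ and $p\nmid b$, the comparison at $x^{b+c-2}$ only gives $(a-c)(a-c-1)\equiv 0$, so the subcase $c\equiv a-1$ needs to be killed separately --- it is, by the merged coefficient at $x^{2c-2}=x^{b+d-2}$ --- but you only discuss the degenerate configuration for $p\mid b$.)

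The genuine gap is the degenerate case $2c=b+d$ with $p\mid b$, which you correctly isolate as the crux but leave as a sketch. There your constraints only give $a\equiv d\equiv 2$, $c\equiv 1\pmod p$ and $f=\alpha x^d(x^u+\delta)^2$, and the contradiction you propose is reducibility of $\mathcal{C}$ via Proposition~\ref{irreducible}; but that requires $\gcd(a,d,2)=2$, i.e.\ that \emph{both} $a$ and $d$ are even, and nothing you have written forces this ($a\equiv d\equiv 2\pmod p$ with $p$ odd says nothing about parity). This step cannot be waved away, because the degenerate family is nonempty: over $\mathbb{F}_{81}$ the curve $y^{20}=x^{18}+2x^{10}+x^{2}=(x^{9}+x)^{2}$ satisfies the divisibility criterion of Proposition~\ref{borges-condition} exactly (here $f'=2(x^9+x)$ and $20f(f^{4}-1)=2(x^9+x)(x^{81}-x)=(x^{81}-x)f'$), with $a=20\equiv 2$, $d=2\equiv 2\pmod 3$. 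So the conclusion of the lemma genuinely fails for curves of type (iii) whose irreducible components are Frobenius nonclassical, and the only escape is the irreducibility of $\mathcal{C}$; your ``short arithmetic'' establishing $2\mid\gcd(a,d)$ is therefore the whole content of this case and must actually be carried out (the parameter relations you cite, such as $2e+1=p^{s}$, do not by themselves yield it). Until that is done, the proof is incomplete precisely where it most needs to be airtight.
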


\begin{proof}
If $p \nmid b$, then $a = b$ by Corollary~\ref{borges-corollary} (ii). Moreover, since $\mathcal{C}$ is $\mathbb{F}_q$-Frobenius nonclassical, $f(x)$ is an MVSP. Suppose first that $\gamma_0 = 0$. Then, by comparing the terms with the lowest degree in both sides of the equation given in Theorem~\ref{MVSP-condition} (f) (that is, $\omega_0 x^d = d\omega_0 x^d$) we obtain $d \equiv 1 \pmod{p}$. By Corollary~\ref{borges-corollary} (vi) (for the $\mathbb{F}_q$ root $0$), it follows that $a \equiv d \equiv 1$ (mod $p$).

Now suppose that $\gamma_0 \neq 0$. Then, by Theorem \ref{MVSP-condition}, $f(x)$ has a simple root in $\mathbb{F}_q$. A combination of itens (iv) and (vi) of Corollary \ref{borges-corollary} imply that $a \equiv d \equiv 1 \pmod{p}$. Finally, if $p \nmid c$, Corollary \ref{borges-corollary} (v) imply that $c \equiv 1 \pmod{p}$.
\end{proof}

The proof of the following lemma is straightforward, and only involves an analysis of the trinomial expansion.

\begin{lemma}\label{lemm2}
Let $F(x) = a_1 x^{b_1} + a_2 x^{b_2} + a_3 x^{b_3} \in \mathbb{F}_q[x]$ be a trinomial such that $F(0) = 0$. Suppose that $F(x)^s$, with $s > 1$, is also a trinomial. Then $s$ must be a power of $p$.
\end{lemma}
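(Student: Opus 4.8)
The plan is to reduce the claim, in two steps, to a statement about a single trinomial with nonzero constant term raised to an exponent coprime to $p$, and then to reach a contradiction by examining which monomials survive in a multinomial expansion. Since $F(0)=0$, every exponent occurring in $F$ is $\geq 1$, so $F(x)=x^{b}\tilde{F}(x)$ with $b\geq 1$ and $\tilde{F}$ a trinomial satisfying $\tilde{F}(0)\neq 0$; as $F(x)^{s}=x^{sb}\tilde{F}(x)^{s}$ has the same number of nonzero terms as $\tilde{F}(x)^{s}$, it is enough to treat $\tilde{F}$. So I may assume $F(x)=a_{0}+a_{1}x^{m}+a_{2}x^{n}$ with $a_{0}a_{1}a_{2}\neq 0$ and $1\leq m<n$. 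Now write $s=p^{e}s'$ with $p\nmid s'$. Raising to the $p^{e}$-th power and using $(\alpha+\beta)^{p}=\alpha^{p}+\beta^{p}$, we get $F(x)^{p^{e}}=a_{0}^{p^{e}}+a_{1}^{p^{e}}x^{p^{e}m}+a_{2}^{p^{e}}x^{p^{e}n}=G(x^{p^{e}})$, where $G(y):=a_{0}^{p^{e}}+a_{1}^{p^{e}}y^{m}+a_{2}^{p^{e}}y^{n}$ is again a trinomial with $G(0)\neq 0$. Then $F(x)^{s}=G(x^{p^{e}})^{s'}$ is obtained from $G(y)^{s'}$ by the substitution $y\mapsto x^{p^{e}}$, hence has the same number of nonzero terms as $G(y)^{s'}$. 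Thus it suffices to prove: if $G$ is a trinomial with $G(0)\neq 0$, $p\nmid s'$, and $G^{s'}$ is a trinomial, then $s'=1$ --- for this gives $s=p^{e}$.

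To prove the reduced statement, write $G(x)=a_{0}+a_{1}x^{m}+a_{2}x^{n}$ with $a_{0}a_{1}a_{2}\neq 0$ and $1\leq m<n$, and suppose toward a contradiction that $s'\geq 2$ while $G^{s'}$ is a trinomial. Consider
\[
G^{s'}=\sum_{\substack{i+j+k=s'\\ i,j,k\geq 0}}\frac{s'!}{i!\,j!\,k!}\,a_{0}^{i}a_{1}^{j}a_{2}^{k}\,x^{jm+kn}.
\]
The constant term $a_{0}^{s'}$ and the leading term $a_{2}^{s'}x^{s'n}$ are nonzero. The key point is that the exponent $m$ is reached only by $(i,j,k)=(s'-1,1,0)$, since any $k\geq 1$ already produces an exponent $\geq n>m$; hence the coefficient of $x^{m}$ in $G^{s'}$ equals $s'\,a_{0}^{s'-1}a_{1}$, which is nonzero precisely because $p\nmid s'$. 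As $0$, $m$ and $s'n$ are pairwise distinct for $s'\geq 2$, the trinomial $G^{s'}$ must have support exactly $\{0,\,m,\,s'n\}$.

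The contradiction comes from the coefficient of $x^{m+(s'-1)n}=x^{s'n-(n-m)}$. A short argument shows that $jm+kn=m+(s'-1)n$ with $i+j+k=s'$ and $i,j,k\geq 0$ has the unique solution $(i,j,k)=(0,1,s'-1)$; equivalently, this is the previous computation applied to the reciprocal trinomial $x^{n}G(1/x)=a_{2}+a_{1}x^{n-m}+a_{0}x^{n}$, whose least positive exponent is $n-m$. So the coefficient of $x^{m+(s'-1)n}$ in $G^{s'}$ is $s'\,a_{1}a_{2}^{s'-1}\neq 0$. But for $s'\geq 2$ and $m<n$ the exponent $m+(s'-1)n$ is none of $0$, $m$, $s'n$ (it equals $m$ only if $s'=1$, and $s'n$ only if $m=n$), contradicting that the support of $G^{s'}$ is $\{0,m,s'n\}$. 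Hence $s'=1$ and $s=p^{e}$.

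The combinatorial core is elementary; the part that needs care is the bookkeeping in the two reductions --- one must verify that factoring out $x^{b}$ and substituting $x\mapsto x^{p^{e}}$ are bijections on monomial supports, so that the property of being a trinomial is genuinely preserved. I would also stress that the hypothesis of \emph{three} distinct terms is essential: over a field of odd characteristic, $(a_{0}+a_{1}x^{n})^{2p^{a}}$ is a trinomial, so the statement fails for binomials. This is exactly why the argument must use the surviving middle term $x^{m}$ with $0<m<n$, which forces the support of $G^{s'}$ to pin down to $\{0,m,s'n\}$ and makes room for the fatal extra exponent $m+(s'-1)n$.
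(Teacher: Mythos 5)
Your proof is correct and follows essentially the same route as the paper: strip the $p$-part of $s$ via the Frobenius, then exhibit four monomials with nonzero coefficients in the remaining $s'$-th power (your $0$, $m$, $m+(s'-1)n$, $s'n$ are, after your normalization by $x^{b_3}$, exactly the paper's four exponents $b_3t$, $b_2+b_3(t-1)$, $b_1(t-1)+b_2$, $b_1t$). Your write-up is in fact a bit more careful than the paper's, since you verify that each of these exponents is attained by a unique multi-index so no cancellation can occur.
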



\begin{proposition}
Let $\mathcal{C}$ be a quadrinomial curve of type (iii) such that $p$ does not divide both $b$ and $c$ simultaneously. Then $\mathcal{C}$ is $\mathbb{F}_q$-Frobenius nonclassical if, and only if, it is defined by one of the following plane equations:
\begin{itemize}
    \item[(i)] $y^{(p^{2 \ell} + p^\ell + 1)t} = x^{(p^{2 \ell} + p^\ell)t} + x^{(p^{2 \ell} + 1)t} + x^{(p^\ell + 1)t}$, where $t = \dfrac{q - 1}{p^{3\ell} - 1}$;
    
    \item[(ii)] $y^{(p^\ell+1)t} = \alpha x^{(p^\ell+1)t} + x^{p^{\ell}t} + x^{t}$, where $t = \dfrac{q - 1}{p^{2\ell} - 1}$, and $\alpha \in \mathbb{F}_{p^{\ell}}^*$.
\end{itemize}
\end{proposition}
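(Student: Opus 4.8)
The plan is to argue as in the preceding propositions, using Propositions~\ref{borges-MVSPcondition} and~\ref{borges-condition}, Corollary~\ref{borges-corollary}, and Lemmas~\ref{lemm1} and~\ref{lemm2}. Suppose the irreducible components of $\mathcal{C}$ are $\mathbb{F}_q$-Frobenius nonclassical. Then $g(y)=y^a$ and $f(x)=\alpha x^b+\beta x^c+\gamma x^d$ (with $b>c>d>0$, since $f(0)=0$) are MVSPs with $V_f=V_g$; in particular $a\mid q-1$ and $\#V_f=1+\frac{q-1}{a}>2$, and by Lemma~\ref{lemm1} we have $a\equiv d\equiv1\pmod p$. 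Proposition~\ref{borges-condition} gives
\[
a\,f(x)^{(q-1)/a+1}=a\,f(x)+(x^q-x)f'(x).
\]
Using the congruences above, the hypothesis that $p$ does not divide both $b$ and $c$, and the fact (Corollary~\ref{borges-corollary}(ii)) that $a=b$ whenever $p\nmid b$, a short computation reduces the right-hand side of this identity to a \emph{trinomial}: most of its monomials of degree $<q$ cancel, their coefficients being multiples of some $a-c$ or $a-d$, which vanish modulo $p$. Since $f(0)=0$, Lemma~\ref{lemm2} then forces $(q-1)/a+1=p^{\ell}$ for some $\ell\ge1$, so $a=\frac{q-1}{p^\ell-1}$ and $f(x)^{p^\ell}=\alpha^{p^\ell}x^{bp^\ell}+\beta^{p^\ell}x^{cp^\ell}+\gamma^{p^\ell}x^{dp^\ell}$ must equal the explicit trinomial found on the right.

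I would then split according to the $p$-divisibility of $b$ and $c$. If $p$ divides neither, then $a=b$, and matching the monomials of $f(x)^{p^\ell}$ with those of the right-hand side in decreasing order of degree forces $c\,p^\ell=q+c-1$, i.e.\ $c=\frac{q-1}{p^\ell-1}=a=b$, contradicting $c<b$; so this case is vacuous. If $p\nmid b$ (hence $a=b$) and $p\mid c$, the matching gives $b\,p^\ell=q+b-1$, $c\,p^\ell=q+d-1$ and $d\,p^\ell=c$, which with $t:=\frac{q-1}{p^{2\ell}-1}$ solve to $d=t$, $c=p^\ell t$, $a=b=(p^\ell+1)t$, so $f(x)=\alpha x^{(p^\ell+1)t}+\beta x^{p^\ell t}+\gamma x^t$. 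If $p\mid b$ and $p\nmid c$ (so $c\equiv1\pmod p$ by Lemma~\ref{lemm1}), the matching gives $d\,p^\ell=b$, $b\,p^\ell=q+c-1$ and $c\,p^\ell=q+d-1$, which with $t:=\frac{q-1}{p^{3\ell}-1}$ solve to $d=(p^\ell+1)t$, $c=(p^{2\ell}+1)t$, $b=(p^{2\ell}+p^\ell)t$ and $a=(p^{2\ell}+p^\ell+1)t$.

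Finally I would pin down the coefficients and normalize. Reading the coefficients off $f(x)^{p^\ell}$ and using that $a,c,d\equiv1\pmod p$ (so that ratios like $c/a$ equal $1$ in $\mathbb{F}_q$), the first surviving case gives $\alpha^{p^\ell}=\alpha$, $\beta^{p^\ell}=\gamma$, $\gamma^{p^\ell}=\beta$, hence $\alpha\in\mathbb{F}_{p^\ell}$, $\beta\in\mathbb{F}_{p^{2\ell}}^*$, $\gamma=\beta^{p^\ell}$; the second gives $\alpha^{p^\ell}=\beta$, $\beta^{p^\ell}=\gamma$, $\gamma^{p^\ell}=\alpha$, hence $\alpha\in\mathbb{F}_{p^{3\ell}}^*$, $\beta=\alpha^{p^\ell}$, $\gamma=\alpha^{p^{2\ell}}$. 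A rescaling $x\mapsto\lambda x$ (and, in the second case, also $y\mapsto\mu y$) brings $\mathcal{C}$ to forms (ii) and (i) of the statement; the needed $\lambda$ exists because the relevant power map on $\mathbb{F}_q^*$ surjects onto $\mathbb{F}_{p^{2\ell}}^*$ (resp.\ onto the subgroup $\mathbb{F}_{p^\ell}^*$ of $a$-th powers), which contains the elements one must hit, and in form (ii) the surviving leading coefficient $\alpha\lambda^{(p^\ell+1)t}$ is a product of an element of $\mathbb{F}_{p^\ell}$ and the norm $N_{\mathbb{F}_{p^{2\ell}}/\mathbb{F}_{p^\ell}}(\beta)$, hence lies in $\mathbb{F}_{p^\ell}$. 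For the converse, substituting each normal form into Proposition~\ref{borges-condition} and redoing the same cancellations (using $a\equiv d\equiv 1\pmod p$, $a=b$ in form (ii), and $\alpha^{p^\ell}=\alpha$) verifies the identity, and irreducibility follows from Proposition~\ref{irreducible}: in both forms $f(x)/x^d$ has nonzero constant term and its derivative is a nonzero monomial, so $f(x)/x^d$ has only simple roots and the $\gcd$ of the multiplicities of the roots of $f$ with $a$ is $1$. The main difficulty is the exponent- and coefficient-matching step — checking that these are the only integer solutions, that the surviving sub-cases are exhaustive under the hypothesis, and that the implicit ratios and norms land in the prescribed subfields — but once the right-hand side of the Frobenius identity is recognized as a trinomial the rest is routine bookkeeping.
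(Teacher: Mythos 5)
Your proposal is correct and follows essentially the same route as the paper: reduce the Stöhr--Voloch/Borges identity of Proposition~\ref{borges-condition} to a trinomial identity via the congruences of Lemma~\ref{lemm1}, invoke Lemma~\ref{lemm2} to force the exponent $(q-1)/a+1$ to be a $p$-power, and then match exponents and coefficients case by case before normalizing by rescalings. In fact your write-up supplies more of the exponent/coefficient bookkeeping and the normalization details than the paper's own (very terse) proof does.
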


\begin{proof}
Let us divide the proof into three cases. 
\\

\noindent (a) \textbf{Case $p \mid b$ and $p \nmid c$:} By Proposition~\ref{borges-condition} and Lemma~\ref{lemm1}, we have the following equation:
\begin{equation*}
(\alpha x^b + \beta x^c + \gamma x^d)^{\frac{q-1}{a} + 1}
= \beta x^{q-1+c} + \gamma x^{q-1+d} + \alpha x^b.
\end{equation*}

By Lemma~\ref{lemm2} and comparing the terms on both sides of the last equation, we obtain
\[
\frac{q - 1}{a} + 1 = p^\ell, \quad
b p^\ell = q + c - 1, \quad
c p^\ell = q + d - 1, \quad
d p^\ell = b.
\]

and
$$
\alpha^{p^\ell} = \beta, \quad \beta^{p^\ell} = \gamma, \quad \gamma^{p^\ell} = \alpha.
$$
In particular, $\gamma \in \mathbb{F}_{p^{3\ell}}^*$, and we can rescale $x$ to conclude that $\mathcal{C}$ is defined by the plane equation of item (i).
\\

\noindent (b) \textbf{Case $p \nmid b$ and $p \mid c$:}
Using arguments analogous to those of the previous case, and noting that $a = b$ by Lemma~\ref{lemm1}, we conclude that the irreducible components of $\mathcal{C}$ are $\mathbb{F}_q$-Frobenius nonclassical if, and only if, it is defined by an equation of the form given in item~$(ii)$.
\\

\noindent (c) \textbf{Case $p \nmid b$ and $p \nmid c$:}
Using the same arguments as in the previous cases leads to $b = c = d$, which is a contradiction. Therefore, in this case, there are no $\mathbb{F}_q$-Frobenius nonclassical curves.

The irreducibility of both curves comes from Proposition \ref{irreducible}, after noting that the only multiple root of 
$$x^{(p^{2 \ell} + p^\ell)t} + x^{(p^{2 \ell} + 1)t} + x^{(p^\ell + 1)t} = x^{(p^{\ell}+1)t}(x^{(p^{2\ell}-1)t} + x^{(p^{2\ell}-p)t} + 1)$$
and
$$\alpha x^{(p^\ell+1)t} + x^{p^{\ell}t} + x^{t} = x^t(\alpha x^{p^{\ell}t} + x^{(p^{\ell}-1)t} + 1)$$
is $0$.
\end{proof}

\section{Acknowledgments}
\noindent Tiago Aprigio was supported by FAPESP (Brazil), grant 2023/07508-1. João Paulo Guardieiro was supported by FAPESP (Brazil), grant 2024/19443-4. We would also like to thank the anonymous reviewers for their suggestions on how to improve our paper.

\end{document}